\journal{Journal of Multivariate Analysis}
\newcommand{\prooffont}{\bf}
\xpatchcmd{\proof}{\itshape}{\prooffont}{}{}
\theoremstyle{plain}
\newtheorem{theorem}{Theorem}
\newtheorem{lemma}{Lemma}
\theoremstyle{definition}
\newcommand\revision[1]{{\color{black}{{#1}}}}
\newcommand{\trans}{^{\top}}
\DeclareMathOperator*{\argmax}{arg\,max}
\begin{document}

\begin{frontmatter}

\title{Unsupervised linear discrimination using skewness}

\author[1]{Una Radoji\v{c}i\'c\corref{mycorrespondingauthor}}
\author[2,3]{Klaus Nordhausen}
\author[4]{Joni Virta}

\address[1]{Vienna University of Technology}
\address[2]{University of Helsinki}
\address[3]{University of Jyv\"askyl\"a}
\address[4]{University of Turku}

\cortext[mycorrespondingauthor]{Corresponding author. Email address: \url{una.radojicic@tuwien.ac.at}}

\begin{abstract}
It is well-known that, in Gaussian two-group separation, the optimally discriminating projection direction can be estimated without any knowledge on the group labels. In this work, we \revision{gather} several such unsupervised estimators based on skewness and derive their limiting distributions. As one of our main results, we show that all affine equivariant estimators of the optimal direction have proportional asymptotic covariance matrices, making their comparison straightforward. Two of our four estimators are novel and two have been proposed already earlier. We use simulations to verify our results and to inspect the finite-sample behaviors of the estimators.
\end{abstract}

\begin{keyword} 
Asymptotic normality \sep
Fisher's linear discriminant \sep
Gaussian mixture \sep
Limiting efficiency \sep
Third moment
\MSC[2020] Primary 62H12 \sep
Secondary 62F12
\end{keyword}

\end{frontmatter}

\section{Introduction}\label{sec:mixture}

Assume that our observed sample $\textbf{x}_1, \ldots , \textbf{x}_n$ is i.i.d. from the $p$-variate normal location mixture,
\begin{align}\label{eq:model_mixture}
    \textbf{x} \sim \alpha_1 \mathcal{N}_p(\boldsymbol{\mu}_1, \boldsymbol{\Sigma}) + \alpha_2 \mathcal{N}_p(\boldsymbol{\mu}_2, \boldsymbol{\Sigma}), 
\end{align}
where the mixture weights $\alpha_1, \alpha_2 > 0$, $\alpha_1 + \alpha_2 = 1$, are fixed, the means are distinct, $\boldsymbol{\mu}_1 \neq \boldsymbol{\mu}_2 $, and $\boldsymbol{\Sigma}$ is positive definite. 

The objective of this work is to study the estimation of a vector $\textbf{u}$, $\| \textbf{u} \| = 1$, such that the univariate projection $\textbf{u}\trans \textbf{x}$ offers the best possible separation between the two mixture components/classes. In case we had observed also the group labels $y_1, \ldots, y_n \in \{-1, 1\}$, this problem would be trivially solvable by the classical linear discriminant analysis which says that the Bayes optimal projection direction is $\boldsymbol{\theta}/\| \boldsymbol{\theta} \|$, where $\boldsymbol{\theta} := \boldsymbol{\Sigma}^{-1} \textbf{h}$, \revision{for $\textbf{h}=\boldsymbol{\mu}_2-\boldsymbol{\mu}_1$}. However, we approach this problem in an unsupervised (``blind'') fashion where the class labels are not known to us, meaning that the estimation is carried out solely based on $\textbf{x}_1, \ldots , \textbf{x}_n$ and utilizing the usual class-specific estimators of $\boldsymbol{\mu}_1, \boldsymbol{\mu}_2,  \boldsymbol{\Sigma}$ is not possible.

Interestingly, the optimal direction $\boldsymbol{\theta}/\| \boldsymbol{\theta} \|$ is still estimable even in the unsupervised context in several different ways, as described in the earlier literature: \revision{\cite{pena2001cluster} showed that if $\min(\alpha_1,\alpha_2)< (3-\sqrt{3})/6$, the projection direction attaining maximal kurtosis among all projections exactly corresponds to $\boldsymbol{\theta}/\| \boldsymbol{\theta} \|$ (up to sign) while if $\min(\alpha_1,\alpha_2) > (3-\sqrt{3})/6$ it is the projection direction attaining minimal kurtosis. \cite{pena2010eigenvectors2} proved that the eigenvectors of a specific fourth-moment matrix have the same property.} \cite{loperfido2013skewness} estimated the optimal direction as a singular vector of a matrix of third standardized cumulants and \cite{loperfido2015vector} achieved the same using the skewness vector defined in \cite{mori1994multivariate}. Most recently, \cite{radojivcic2021large} compared several projection pursuit-based estimators from an asymptotic viewpoint, through their limiting efficiencies. In this context, limiting efficiency refers to the ``ratio'' between the asymptotic covariance matrix of $\hat{\boldsymbol{\theta}}/\| \hat{\boldsymbol{\theta}} \|$ and the asymptotic covariance matrix of the supervised LDA-based estimator. In this work, we continue this line of research, by deriving the limiting efficiencies of a total of four skewness-based unsupervised estimators of $\boldsymbol{\theta}/\| \boldsymbol{\theta} \|$: a novel moment-based estimator, the estimators proposed by Loperfido in \cite{loperfido2013skewness,loperfido2015vector}, and a novel joint diagonalization -type estimator, called 3-JADE. \revision{We note that some of these earlier works have considered more general models than \eqref{eq:model_mixture}, e.g., elliptical location mixtures \cite{pena2001cluster} or location mixtures of weakly symmetric distributions with proportional covariance matrices \cite{loperfido2015vector}. However, while narrower, the normal mixture has the advantage of being analytically tractable enough to allow the comprehensive study of the deeper asymptotic properties of the methods. In fact, up to our best knowledge, limiting distributions of the unsupervised estimators of $\boldsymbol{\theta}/\| \boldsymbol{\theta} \|$ have earlier been considered only by \cite{radojivcic2021large}, and even this was in the context of model~\eqref{eq:model_mixture}.}

We next provide a summary of the main findings of this work: (i) Three of the four estimators we consider are {affine equivariant}, meaning that the projections given by them are essentially unaffected by the coordinate system of the original data, see Section \ref{sec:ae} for the precise definition. This property (affine equivariance) turns out to be such strong that it almost completely determines the asymptotic behavior of an estimator, and we show that the asymptotic covariance matrices of all affine equivariant estimators of $\boldsymbol{\theta}/\| \boldsymbol{\theta} \|$ are proportional to each other. This unified form makes it easy to compare two affine equivariant estimators through their corresponding constant factors. (ii) As a sort of complement to the previous point, we show that the non-affine equivariant method of moments estimator does not have an asymptotic covariance matrix of the described form. This goes to show that the requirement of affine equivariance cannot be dropped in the corresponding result. (iii) We show that the estimator proposed in \cite{loperfido2013skewness} and the novel 3-JADE are {equally} efficient not only to each other, but also to a skewness-based projection pursuit estimator proposed earlier in \cite{radojivcic2021large}. (iv) We establish that the fourth considered estimator, proposed in \cite{loperfido2015vector}, is strictly less efficient than the estimators mentioned in the previous point. (v) In a simulation study we confirm the limiting distribution results of the affine equivariant estimators and observe that, from the estimators discussed in this paper, the novel 3-JADE approach seems to be the best unsupervised estimator from a practical point of view. \revision{Finally, up to our best knowledge, out of the seven theorems and five lemmas included in the main text, only Lemmas 2, 3 and 5 have been included in earlier literature (in one form or another), see the corresponding parts of this manuscript for details.}

The paper is organized as follows: \revision{the four} considered estimators are treated individually in Sections \ref{sec:preliminary}, \ref{sec:estimator_1}, \ref{sec:estimator_2}, \ref{sec:jade}. Section \ref{sec:ae} is devoted to studying the implications of affine equivariance to the estimation. In Section \ref{sec:simu} we present our simulation studies and Section \ref{sec:discussion} contains discussion about the results.

As the symmetric mixture with $\alpha_1 = \alpha_2$ has skewness zero, we exclude this case and make throughout the paper the assumption that $\alpha_1 > \alpha_2$. For a non-zero vector $\textbf{v} \in \mathbb{R}^p$, we use $\textbf{P}_\textbf{v} := \textbf{v} \textbf{v}\trans / \|\textbf{v}\|^2$ and $\textbf{Q}_\textbf{v} := \textbf{I}_p - \textbf{P}_\textbf{v}$ to denote the orthogonal projections to the subspace spanned by $\textbf{v}$ and to its orthogonal complement, respectively. \revision{The standard basis vectors of $\mathbb{R}^p$ are denoted as $\textbf{e}_k$, $k \in \{1, \ldots, p\}$.} The following quantities are encountered often enough for them to warrant their own notation $\tau := \textbf{h}\trans  \boldsymbol{\Sigma}^{-1} \textbf{h}$, $\beta := \alpha_1 \alpha_2$, $\gamma := \alpha_1 - \alpha_2$. Finally, we note that most of the methods we consider estimate a population quantity that is only proportional to  $\boldsymbol{\theta}$ and the normalization by $\| \boldsymbol{\theta} \|$ is thus done to facilitate a comparison between the different methods. From a practical point of view, the normalization only affects the scale of the projection $\textbf{x}\trans \boldsymbol{\theta}/\| \boldsymbol{\theta} \|$ and not its direction, and is, as such, without loss of generality.

\section{Method of moments estimator}\label{sec:preliminary}

We first consider a simple method of moments estimator, based on the following second and third moments of the observed mixture,
\begin{align*}
    \textbf{C}_2(\textbf{x}) \equiv \textbf{C}_2 := \mathrm{E}[ \{\textbf{x} - \mathrm{E}(\textbf{x})\} \{\textbf{x} - \mathrm{E}(\textbf{x})\}\trans  ]\,,  \quad \quad 
    \textbf{c}_3(\textbf{x}) \equiv \textbf{c}_3 := \mathrm{E}[ \{\textbf{x} - \mathrm{E}(\textbf{x})\} \{\textbf{x} - \mathrm{E}(\textbf{x})\}\trans  \{\textbf{x} - \mathrm{E}(\textbf{x})\} ].
\end{align*}
It turns out that these two moments together contain enough information to estimate the discriminating direction $\boldsymbol{\theta}/\| \boldsymbol{\theta} \|$, as shown in the next lemma.

\begin{lemma}\label{lem:method_of_moments_population}
We have
\begin{align*}
    \boldsymbol{\theta} = (\textbf{C}_2 - \beta^{1/3} \gamma^{-2/3} \| \textbf{c}_3 \|^{-4/3} \textbf{c}_3 \textbf{c}_3\trans )^{-1} \beta^{-1/3} \gamma^{-1/3} \| \textbf{c}_3 \|^{-2/3} \textbf{c}_3.
\end{align*}
\end{lemma}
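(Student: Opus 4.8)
The plan is to compute the two population moments $\textbf{C}_2$ and $\textbf{c}_3$ explicitly in terms of the model parameters $\boldsymbol{\Sigma}$, $\textbf{h}$, $\beta$ and $\gamma$, and then to verify the claimed identity by direct substitution. First I would center the mixture: writing $\textbf{z} := \textbf{x} - \mathrm{E}(\textbf{x})$ and noting $\mathrm{E}(\textbf{x}) = \alpha_1\boldsymbol{\mu}_1 + \alpha_2\boldsymbol{\mu}_2$, the within-group centered means are $\textbf{m}_1 := \boldsymbol{\mu}_1 - \mathrm{E}(\textbf{x}) = -\alpha_2\textbf{h}$ and $\textbf{m}_2 := \boldsymbol{\mu}_2 - \mathrm{E}(\textbf{x}) = \alpha_1\textbf{h}$. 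Conditioning on the group label and using $\mathrm{E}[\textbf{z}\textbf{z}\trans \mid \text{group }k] = \boldsymbol{\Sigma} + \textbf{m}_k\textbf{m}_k\trans$, the weights collapse through $\alpha_1\alpha_2^2 + \alpha_2\alpha_1^2 = \beta$ to give the compact form $\textbf{C}_2 = \boldsymbol{\Sigma} + \beta\,\textbf{h}\textbf{h}\trans$.

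The main computational step is the third moment $\textbf{c}_3 = \mathrm{E}[\|\textbf{z}\|^2\textbf{z}]$. Within group $k$ I would decompose $\textbf{z} = \textbf{w} + \textbf{m}_k$ with $\textbf{w} \sim \mathcal{N}_p(\textbf{0}, \boldsymbol{\Sigma})$, expand $\|\textbf{z}\|^2\textbf{z}$ into monomials in $\textbf{w}$ and $\textbf{m}_k$, and take expectations. Every term of odd total degree in $\textbf{w}$ vanishes by Gaussian symmetry, leaving the three surviving contributions $\mathrm{tr}(\boldsymbol{\Sigma})\,\textbf{m}_k + 2\boldsymbol{\Sigma}\textbf{m}_k + \|\textbf{m}_k\|^2\textbf{m}_k$. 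Summing over $k$ with weights $\alpha_k$, the first two contributions cancel because $\alpha_1\textbf{m}_1 + \alpha_2\textbf{m}_2 = \textbf{0}$ (equivalently $\mathrm{E}(\textbf{z}) = \textbf{0}$), while the cubic term simplifies through $\alpha_1^2 - \alpha_2^2 = \gamma$ to yield $\textbf{c}_3 = \beta\gamma\,\|\textbf{h}\|^2\,\textbf{h}$. In particular $\textbf{c}_3$ is a positive multiple of $\textbf{h}$, since $\gamma > 0$ under the standing assumption $\alpha_1 > \alpha_2$, and $\|\textbf{c}_3\| = \beta\gamma\,\|\textbf{h}\|^3$.

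With both moments in hand, the remaining step is exponent bookkeeping. Substituting $\textbf{c}_3\textbf{c}_3\trans = \beta^2\gamma^2\|\textbf{h}\|^4\,\textbf{h}\textbf{h}\trans$ and $\|\textbf{c}_3\|^{-4/3} = \beta^{-4/3}\gamma^{-4/3}\|\textbf{h}\|^{-4}$ into the subtracted rank-one term, the powers of $\beta$, $\gamma$ and $\|\textbf{h}\|$ collapse to give exactly $\beta\,\textbf{h}\textbf{h}\trans$; hence the bracketed matrix reduces to $\textbf{C}_2 - \beta\,\textbf{h}\textbf{h}\trans = \boldsymbol{\Sigma}$. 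An identical count shows that the scalar factor $\beta^{-1/3}\gamma^{-1/3}\|\textbf{c}_3\|^{-2/3}$ multiplied by $\textbf{c}_3 = \beta\gamma\|\textbf{h}\|^2\textbf{h}$ reduces to $\textbf{h}$. The claimed expression therefore equals $\boldsymbol{\Sigma}^{-1}\textbf{h} = \boldsymbol{\theta}$, as required.

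I do not anticipate a genuine obstacle: the only place demanding care is the third-moment expansion, where one must correctly identify the three surviving terms $\mathrm{tr}(\boldsymbol{\Sigma})\textbf{m}_k$, $2\boldsymbol{\Sigma}\textbf{m}_k$ and $\|\textbf{m}_k\|^2\textbf{m}_k$, and then observe that the two pieces linear in $\textbf{m}_k$ cancel upon weighting by $\alpha_k$. The concluding verification is essentially forced: the fractional exponents appearing in the statement are precisely those that simultaneously produce the cancellation yielding $\boldsymbol{\Sigma}$ and the normalization yielding $\textbf{h}$, so once the moments are computed correctly the identity follows mechanically.
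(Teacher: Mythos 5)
Your proposal is correct and follows essentially the same route as the paper: compute $\textbf{C}_2 = \boldsymbol{\Sigma} + \beta\,\textbf{h}\textbf{h}\trans$ and $\textbf{c}_3 = \beta\gamma\|\textbf{h}\|^2\textbf{h}$, then check that the fractional powers collapse the rank-one correction to $\beta\,\textbf{h}\textbf{h}\trans$ and the scalar factor to $\textbf{h}$, yielding $\boldsymbol{\Sigma}^{-1}\textbf{h} = \boldsymbol{\theta}$. The only difference is that you derive the third moment by direct Gaussian expansion where the paper cites a reference, so your write-up is in fact more self-contained.
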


Lemma \ref{lem:method_of_moments_population} \revision{states} that, if one knows the mixing weights $\alpha_1, \alpha_2$, the moments $\textbf{C}_2, \textbf{c}_3$ can be used to construct $\boldsymbol{\theta}$. As such, a natural sample method of moments estimator of $\boldsymbol{\theta}$ is then obtained as 
\begin{align*}
    \hat{\boldsymbol{\theta}}_{\mathrm{M}} = (\hat{\textbf{C}}_2 - \beta^{1/3} \gamma^{-2/3} \| \hat{\textbf{c}}_3 \|^{-4/3} \hat{\textbf{c}}_3 \hat{\textbf{c}}_3\trans )^{-1} \beta^{-1/3} \gamma^{-1/3} \| \hat{\textbf{c}}_3 \|^{-2/3} \hat{\textbf{c}}_3,
\end{align*}
where the sample second and third moments are
\begin{align*}
    \hat{\textbf{C}}_2 := \frac{1}{n} \sum_{i = 1}^n (\textbf{x}_i - \bar{\textbf{x}}) (\textbf{x}_i - \bar{\textbf{x}})\trans , \,\quad
    \hat{\textbf{c}}_3 := \frac{1}{n} \sum_{i = 1}^n (\textbf{x}_i - \bar{\textbf{x}}) (\textbf{x}_i - \bar{\textbf{x}})\trans  (\textbf{x}_i - \bar{\textbf{x}}).
\end{align*}
As the main result of this section, we give the limiting distribution of the normalized estimator $\hat{\boldsymbol{\theta}}_{\mathrm{M}}$.

\begin{theorem}\label{theo:limiting_1}
We have, as $n \rightarrow \infty$,
\begin{align*}
    \sqrt{n} \left( \frac{\hat{\boldsymbol{\theta}}_{\mathrm{M}}}{\| \hat{\boldsymbol{\theta}}_{\mathrm{M}} \|} - \frac{\boldsymbol{\theta}}{\| \boldsymbol{\theta} \|} \right) \rightsquigarrow \mathcal{N}_p \left( \textbf{0}, \left\{ \omega_1 \omega_2 -\frac{ \tau (1 + \beta \tau)}{\| \boldsymbol{\theta} \|^2} \right\} \textbf{Q}_{\boldsymbol{\theta}} \boldsymbol{\Sigma}^{-1} \textbf{Q}_{\boldsymbol{\theta}} + 4 \omega_1 \textbf{Q}_{\boldsymbol{\theta}} (\boldsymbol{\Sigma} + \beta \textbf{h} \textbf{h}\trans ) \textbf{Q}_{\boldsymbol{\theta}} \right),
\end{align*}
where $\omega_1 := (1 + \beta \tau)^2/( \|\textbf{h}\|^4 \beta^2 (1 - 4 \beta) \| \boldsymbol{\theta}^2 \|)$ and $\omega_2 := 2 \mathrm{tr}(\boldsymbol{\Sigma}^2) + 4 \beta \textbf{h}\trans  \boldsymbol{\Sigma} \textbf{h} + \beta ( 1 - 4 \beta ) \| \textbf{h} \|^4$.
\end{theorem}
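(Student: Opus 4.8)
The plan is to obtain the limiting law through three successive applications of the delta method: to the sample moments $(\hat{\textbf{C}}_2, \hat{\textbf{c}}_3)$, then to the smooth map $g$ that sends $(\textbf{C}_2, \textbf{c}_3)$ to the right-hand side of Lemma~\ref{lem:method_of_moments_population}, and finally to the normalization $\textbf{v} \mapsto \textbf{v}/\|\textbf{v}\|$. Writing $\textbf{z} := \textbf{x} - \mathrm{E}(\textbf{x})$ and using the latent-label representation $\textbf{z} = \textbf{d}_Y + \boldsymbol{\Sigma}^{1/2}\textbf{g}$, with $\textbf{g} \sim \mathcal{N}_p(\textbf{0}, \textbf{I}_p)$ independent of the group indicator $Y$ and $\textbf{d}_Y \in \{-\alpha_2 \textbf{h}, \alpha_1 \textbf{h}\}$ the centered component means, I would first record the population values $\textbf{C}_2 = \boldsymbol{\Sigma} + \beta \textbf{h}\textbf{h}\trans$ and $\textbf{c}_3 = \beta \gamma \|\textbf{h}\|^2 \textbf{h}$. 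By Lemma~\ref{lem:method_of_moments_population} these make the inner matrix of $g$ collapse to $\boldsymbol{\Sigma}$ and the trailing factor to $\textbf{h}$, so that $g$ evaluates to $\boldsymbol{\theta} = \boldsymbol{\Sigma}^{-1}\textbf{h}$ and its derivative can be taken at this convenient point. Consistency of $\hat{\boldsymbol{\theta}}_{\mathrm{M}}$ and invertibility of the estimated inner matrix in a neighborhood of the truth (needed to license the delta method) follow from the continuous mapping theorem together with $\gamma = \alpha_1 - \alpha_2 > 0$, which also fixes the sign of the estimated direction.

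Next I would compute the joint influence functions of $(\hat{\textbf{C}}_2, \hat{\textbf{c}}_3)$, paying attention to the centering by $\bar{\textbf{x}}$. For $\hat{\textbf{C}}_2$ the mean correction is of order $O_P(n^{-1})$, so its influence function is simply $\textbf{z}\textbf{z}\trans - \textbf{C}_2$. For $\hat{\textbf{c}}_3$, however, expanding $\|\textbf{x}_i - \bar{\textbf{x}}\|^2(\textbf{x}_i - \bar{\textbf{x}})$ around the true mean produces a genuine first-order correction, and collecting the $O_P(n^{-1/2})$ terms yields the influence function $\mathrm{IF}_{\textbf{c}_3}(\textbf{z}) = \|\textbf{z}\|^2\textbf{z} - \{\mathrm{tr}(\textbf{C}_2)\textbf{I}_p + 2\textbf{C}_2\}\textbf{z} - \textbf{c}_3$. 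Joint asymptotic normality of $\sqrt{n}(\hat{\textbf{C}}_2 - \textbf{C}_2, \hat{\textbf{c}}_3 - \textbf{c}_3)$ then follows from the multivariate central limit theorem.

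Differentiating $g$ at the population point and writing $\hat{\boldsymbol{\theta}}_{\mathrm{M}} = \textbf{A}^{-1}\textbf{b}$ with $\textbf{A} = \boldsymbol{\Sigma}$ and $\textbf{b} = \textbf{h}$, the influence function of $\hat{\boldsymbol{\theta}}_{\mathrm{M}}$ is a linear combination of the two moment influence functions, in which the scalar factors $\beta^{1/3}\gamma^{-2/3}\|\textbf{c}_3\|^{-4/3}$ and $\beta^{-1/3}\gamma^{-1/3}\|\textbf{c}_3\|^{-2/3}$ contribute extra terms through their dependence on $\|\textbf{c}_3\|$. The decisive simplification comes from the normalization step, i.e. premultiplication by $\|\boldsymbol{\theta}\|^{-1}\textbf{Q}_{\boldsymbol{\theta}}$: since $\boldsymbol{\Sigma}^{-1}\textbf{c}_3 = \beta\gamma\|\textbf{h}\|^2\boldsymbol{\theta}$ is proportional to $\boldsymbol{\theta}$ and $\boldsymbol{\Sigma}^{-1}\textbf{C}_2\boldsymbol{\theta} = (1 + \beta\tau)\boldsymbol{\theta}$ (using $\textbf{h}\trans\boldsymbol{\theta} = \tau$), the scalar-factor corrections together with the two constant terms, all of which are proportional to $\boldsymbol{\theta}$ after multiplication by $\boldsymbol{\Sigma}^{-1}$, are annihilated by $\textbf{Q}_{\boldsymbol{\theta}}$. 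One is thereby led to the compact influence function
\[
\psi(\textbf{z}) = \frac{1}{\|\boldsymbol{\theta}\|}\, \textbf{Q}_{\boldsymbol{\theta}} \boldsymbol{\Sigma}^{-1} \Bigl[ -(\boldsymbol{\theta}\trans \textbf{z})\, \textbf{z} + \lambda \bigl\{ \|\textbf{z}\|^2 \textbf{z} - (\mathrm{tr}(\textbf{C}_2)\textbf{I}_p + 2\textbf{C}_2)\textbf{z} \bigr\} \Bigr], \quad \lambda := \frac{1 + \beta\tau}{\beta\gamma\|\textbf{h}\|^2},
\]
and the target covariance matrix is $\mathrm{E}[\psi(\textbf{z})\psi(\textbf{z})\trans]$. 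The identity $\omega_1 = \lambda^2/\|\boldsymbol{\theta}\|^2$, which follows from $\gamma^2 = 1 - 4\beta$, already previews how the coefficient $\omega_1$ will arise.

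Finally, I would evaluate $\mathrm{E}[\psi\psi\trans]$. Because $\psi$ is cubic in $\textbf{z}$, the products entering this expectation are of degree up to six, and I would carry out the computation by substituting $\textbf{z} = \textbf{d}_Y + \boldsymbol{\Sigma}^{1/2}\textbf{g}$, applying Isserlis' theorem to the Gaussian factor conditionally on $Y$, and averaging over the two mixture components. The result organises into the two structural blocks $\textbf{Q}_{\boldsymbol{\theta}}\boldsymbol{\Sigma}^{-1}\textbf{Q}_{\boldsymbol{\theta}}$ and $\textbf{Q}_{\boldsymbol{\theta}}(\boldsymbol{\Sigma} + \beta\textbf{h}\textbf{h}\trans)\textbf{Q}_{\boldsymbol{\theta}}$, because every remaining directional term (those along $\textbf{h}$ or $\boldsymbol{\theta}$) is absorbed by the flanking projections; the self-covariance of the $\lambda$-weighted part delivers the $\omega_1\omega_2$ and $4\omega_1$ coefficients, while its cross-covariance with the $-(\boldsymbol{\theta}\trans\textbf{z})\textbf{z}$ part produces the correction $-\tau(1 + \beta\tau)/\|\boldsymbol{\theta}\|^2$. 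The main obstacle is precisely this last step: the bookkeeping of the fifth- and sixth-order mixed moments and the subsequent algebraic consolidation into the stated scalar coefficients is lengthy, and it is the only place where genuine care, rather than routine differentiation, is required.
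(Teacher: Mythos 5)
Your proposal is correct and follows essentially the same route as the paper: a linearization of $(\hat{\textbf{C}}_2,\hat{\textbf{c}}_3)$ with the same centering correction $\{2\textbf{C}_2+\mathrm{tr}(\textbf{C}_2)\textbf{I}_p\}\textbf{z}$ for the third moment (the paper's matrix $\textbf{B}$), the delta method through the map of Lemma~\ref{lem:method_of_moments_population} and the normalization, and a Stein/Isserlis-type evaluation of the resulting fifth- and sixth-order mixture moments (the paper's Lemmas~\ref{lem:joint_limiting}--\ref{lem:moment_gathering_lemma}). The only organizational difference is that you apply $\textbf{Q}_{\boldsymbol{\theta}}$ before computing moments, which legitimately kills the $\textbf{h}\textbf{h}\trans$ part of the paper's matrix $\textbf{A}$ and the $\beta\boldsymbol{\theta}\boldsymbol{\theta}\trans$ correction early and so trims the bookkeeping, whereas the paper assembles the full joint covariance $\boldsymbol{\Theta}$ first and projects at the very end.
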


The method of moments estimator is not entirely satisfactory. The main drawback is that its use requires knowing the true mixture weights $\alpha_1, \alpha_2$, making it very impractical. Moreover, its limiting distribution in Theorem \ref{theo:limiting_1} is rather cumbersome and difficult to interpret. In the next section, we show that better-behaving estimators are obtained by restricting one's attention to affine equivariant functionals in a specific sense.


\section{Affine equivariant estimators}\label{sec:ae}

Let now $g(\textbf{x}) \in \mathbb{R}^p$ be a functional of (the distribution of) the random vector $\textbf{x}$ that is affine equivariant (AE) in the sense that $g(\textbf{A}\trans \textbf{x} + \textbf{b}) = \textbf{A}^{-1} g(\textbf{x})$ for all $\textbf{b} \in \mathbb{R}^p$ and all invertible $\textbf{A} \in \mathbb{R}^{p \times p}$. The above form of affine equivariance guarantees that the projection yielded by an AE functional is (up to location) unaffected by the coordinate system of the data,
\begin{align*}
    g(\textbf{A}\trans \textbf{x} + \textbf{b})\trans  (\textbf{A}\trans  \textbf{x}_0 + \textbf{b}) = g(\textbf{x})\trans  \textbf{x}_0 + g(\textbf{x})\trans  (\textbf{A}^{-1})\trans \textbf{b},
\end{align*}
where $g(\textbf{x})\trans  \textbf{x}_0$ is the projection in the original basis and $g(\textbf{x})\trans  (\textbf{A}^{-1})\trans \textbf{b}$ is a location artifact that does not depend on the projected point $\textbf{x}_0$.

We next show that the limiting distributions of all affine equivariant estimators of $\boldsymbol{\theta}/\| \boldsymbol{\theta} \|$ are identical apart from a single degree of freedom. \revision{Note that, even though we consider only skewness-based estimators in this work, this result is wider and indeed applies to all AE estimators of the optimal direction.} Below, $\hat{\boldsymbol{\theta}}(\textbf{x}_i)$ denotes a functional (statistic) of the sample $\textbf{x}_1, \ldots, \textbf{x}_n$. 

\begin{theorem}\label{theo:form_of_the_limiting_cov}
    Assume that (i) $\hat{\boldsymbol{\theta}}(\textbf{x}_i)$ is affine equivariant, and (ii) for every $\textbf{h}$ and $\boldsymbol{\Sigma}$, there exists a non-zero constant $B$ such that $\sqrt{n} \{ \hat{\boldsymbol{\theta}}(\textbf{x}_i) - B \boldsymbol{\theta} \}$ admits a limiting normal distribution. Then, there exists $C \equiv C(\textbf{h}, \boldsymbol{\Sigma}) > 0$ such that, as $n \rightarrow \infty$,
    \begin{align*}
        \sqrt{n} \left\{ \frac{\hat{\boldsymbol{\theta}}(\textbf{x}_i)}{\| \hat{\boldsymbol{\theta}}(\textbf{x}_i) \|} - \frac{\boldsymbol{\theta}}{\| \boldsymbol{\theta} \|} \right\} \rightsquigarrow \mathcal{N}_p \left( \textbf{0}, C \frac{\tau}{\| \boldsymbol{\theta} \|^2} \textbf{Q}_{\boldsymbol{\theta}} \boldsymbol{\Sigma}^{-1} \textbf{Q}_{\boldsymbol{\theta}} \right).
    \end{align*}
\end{theorem}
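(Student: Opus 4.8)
The plan is to use affine equivariance twice: first to whiten the data into a canonical configuration that carries a large symmetry group, and then to transport the resulting covariance structure back to general $\textbf{h}, \boldsymbol{\Sigma}$. Let $\textbf{V}$ denote the limiting covariance of $\sqrt{n}\{\hat{\boldsymbol{\theta}}(\textbf{x}_i) - B\boldsymbol{\theta}\}$ supplied by assumption (ii); by a sign convention I may take $B > 0$. First I would set $\textbf{z}_i := \boldsymbol{\Sigma}^{-1/2}(\textbf{x}_i - \textbf{c})$, so that the $\textbf{z}_i$ follow the model with covariance $\textbf{I}_p$, mean difference $\textbf{h}_z = \boldsymbol{\Sigma}^{-1/2}\textbf{h}$, and optimal direction $\boldsymbol{\theta}_z = \textbf{h}_z$ with $\|\boldsymbol{\theta}_z\|^2 = \tau$. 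Since $\textbf{x}_i = \boldsymbol{\Sigma}^{1/2}\textbf{z}_i + \textbf{c}$, affine equivariance gives $\hat{\boldsymbol{\theta}}(\textbf{x}_i) = \boldsymbol{\Sigma}^{-1/2}\hat{\boldsymbol{\theta}}(\textbf{z}_i)$, and hence $\textbf{V} = \boldsymbol{\Sigma}^{-1/2}\textbf{V}_z\boldsymbol{\Sigma}^{-1/2}$, where $\textbf{V}_z$ is the limiting covariance in the whitened model. It therefore suffices to determine $\textbf{V}_z$.

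Next I would exploit the symmetry of the whitened mixture. Let $G := \{\textbf{A}\in O(p): \textbf{A}\textbf{h}_z = \textbf{h}_z\}\cong O(p-1)$. For each $\textbf{A}\in G$ the transformation $\textbf{z}\mapsto\textbf{A}\trans\textbf{z} + \textbf{b}$, with $\textbf{b}$ chosen to fix the two means, leaves the whitened model invariant --- here the standing assumption $\alpha_1 > \alpha_2$ is needed to prevent swapping the components. Distributional invariance of the transformed sample, together with affine equivariance and $\textbf{A}^{-1}\boldsymbol{\theta}_z = \boldsymbol{\theta}_z$, yields $\textbf{A}^{-1}\sqrt{n}\{\hat{\boldsymbol{\theta}}(\textbf{z}_i) - B\boldsymbol{\theta}_z\} \stackrel{\mathrm{d}}{=} \sqrt{n}\{\hat{\boldsymbol{\theta}}(\textbf{z}_i) - B\boldsymbol{\theta}_z\}$, so in the limit $\textbf{A}\trans\textbf{V}_z\textbf{A} = \textbf{V}_z$ for every $\textbf{A}\in G$. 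Writing $\textbf{V}_z$ in block form relative to $\mathrm{span}(\textbf{h}_z)\oplus\textbf{h}_z^\perp$, the reflection fixing $\textbf{h}_z$ and negating $\textbf{h}_z^\perp$ forces the off-diagonal block to vanish, while commutation with all of $O(p-1)$ on the lower block forces it to be scalar. This pins down $\textbf{V}_z = a\textbf{P}_{\boldsymbol{\theta}_z} + b\textbf{Q}_{\boldsymbol{\theta}_z}$ with $a, b \geq 0$.

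It then remains to normalize and transport back. Applying the delta method to $\textbf{v}\mapsto\textbf{v}/\|\textbf{v}\|$ at $\textbf{v} = B\boldsymbol{\theta}$, whose Jacobian is $(B\|\boldsymbol{\theta}\|)^{-1}\textbf{Q}_{\boldsymbol{\theta}}$, the normalized estimator has limiting covariance $(B^2\|\boldsymbol{\theta}\|^2)^{-1}\textbf{Q}_{\boldsymbol{\theta}}\textbf{V}\textbf{Q}_{\boldsymbol{\theta}}$ with $\textbf{V} = \boldsymbol{\Sigma}^{-1/2}(a\textbf{P}_{\boldsymbol{\theta}_z}+b\textbf{Q}_{\boldsymbol{\theta}_z})\boldsymbol{\Sigma}^{-1/2}$. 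The crucial simplification is that the $a$-term drops out: because $\boldsymbol{\Sigma}^{-1/2}\boldsymbol{\theta}_z = \boldsymbol{\theta}$, one has $\textbf{Q}_{\boldsymbol{\theta}}\boldsymbol{\Sigma}^{-1/2}\textbf{P}_{\boldsymbol{\theta}_z}\propto\textbf{Q}_{\boldsymbol{\theta}}\boldsymbol{\theta} = \textbf{0}$; and since $\boldsymbol{\Sigma}^{-1/2}\textbf{Q}_{\boldsymbol{\theta}_z}\boldsymbol{\Sigma}^{-1/2} = \boldsymbol{\Sigma}^{-1} - \boldsymbol{\theta}\boldsymbol{\theta}\trans/\tau$, the sandwich $\textbf{Q}_{\boldsymbol{\theta}}(\cdot)\textbf{Q}_{\boldsymbol{\theta}}$ collapses it to $\textbf{Q}_{\boldsymbol{\theta}}\boldsymbol{\Sigma}^{-1}\textbf{Q}_{\boldsymbol{\theta}}$. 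Hence $\textbf{Q}_{\boldsymbol{\theta}}\textbf{V}\textbf{Q}_{\boldsymbol{\theta}} = b\,\textbf{Q}_{\boldsymbol{\theta}}\boldsymbol{\Sigma}^{-1}\textbf{Q}_{\boldsymbol{\theta}}$, and the limiting covariance takes the advertised form $C\frac{\tau}{\|\boldsymbol{\theta}\|^2}\textbf{Q}_{\boldsymbol{\theta}}\boldsymbol{\Sigma}^{-1}\textbf{Q}_{\boldsymbol{\theta}}$ with $C := b/(B^2\tau)$, which is positive precisely when the limit is non-degenerate orthogonally to $\boldsymbol{\theta}$, i.e.\ $b > 0$.

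The hard part will be the symmetry argument of the second step: verifying that invariance under the full stabilizer $G\cong O(p-1)$ really forces $\textbf{V}_z$ into the two-parameter family $a\textbf{P}_{\boldsymbol{\theta}_z}+b\textbf{Q}_{\boldsymbol{\theta}_z}$, and in particular handling the small-dimensional case $p = 2$, where $O(p-1) = O(1) = \{\pm1\}$. There the single reflection still annihilates the off-diagonal block and the $1\times1$ lower block is automatically scalar, so the conclusion survives. The remaining ingredients --- the whitening reduction, the delta method, and the block algebra collapsing $\textbf{V}$ onto $\textbf{Q}_{\boldsymbol{\theta}}\boldsymbol{\Sigma}^{-1}\textbf{Q}_{\boldsymbol{\theta}}$ --- are routine linear algebra.
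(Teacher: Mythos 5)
Your proposal is correct and follows essentially the same route as the paper's proof: reduction to the whitened model via affine equivariance, the $O(p-1)$-stabilizer invariance argument forcing the limiting covariance into the family $a\textbf{P}_{\textbf{m}}+b\textbf{Q}_{\textbf{m}}$ (the paper's Lemma 8), and the delta method plus the projection algebra to collapse everything onto $\textbf{Q}_{\boldsymbol{\theta}}\boldsymbol{\Sigma}^{-1}\textbf{Q}_{\boldsymbol{\theta}}$. The only (immaterial) difference is ordering: the paper first expands the normalized estimator in the original coordinates (its Lemma 7) and applies the symmetry argument to the normalized whitened estimator, whereas you apply the symmetry to the unnormalized whitened covariance and normalize at the end.
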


We note that $\boldsymbol{\theta} = \boldsymbol{\Sigma}^{-1} \textbf{h}$ in Theorem \ref{theo:form_of_the_limiting_cov} is technically also a function of $\textbf{h}$ and $\boldsymbol{\Sigma}$, but to keep the exposition more readable, we have not made this explicit. Theorem \ref{theo:form_of_the_limiting_cov} essentially \revision{states} that every affine equivariant estimator of $\boldsymbol{\theta}/\|\boldsymbol{\theta}\|$ that admits a limiting distribution has the same limiting covariance matrix up to a constant. This result makes comparing different AE estimators of $\boldsymbol{\theta}/\|\boldsymbol{\theta}\|$ considerably easier as it is sufficient to compare the corresponding factors $C$ only. Recall that we assumed in Section \ref{sec:mixture} that the mixture weights $\alpha_1, \alpha_2$ are fixed. As such, while our notation does not explicitly show it, the constant $C$ depends also on the mixture weights for any given estimator. Note also that the method of moments estimator in Section \ref{sec:preliminary} is not affine equivariant and thus its limiting covariance matrix in Theorem~\ref{theo:limiting_1} reveals that, if the assumption of affine equivariance is dropped, then the form postulated in Theorem \ref{theo:form_of_the_limiting_cov} might no longer hold.

Recall from Section \ref{sec:mixture} that if, in addition to the data $\textbf{x}_1, \ldots, \textbf{x}_n \in \mathbb{R}^p$, we also knew the labels $y_1, \ldots, y_n \in \{ -1, 1 \}$ indicating the group memberships, then supervised methods could be used to estimate $\boldsymbol{\theta}/\|\boldsymbol{\theta}\|$. In such a scenario, the Bayes optimal estimator is given by the classical linear discriminant analysis (LDA) estimator, and in \cite[Theorem 1]{radojivcic2021large} it was shown that this estimator also has a limiting covariance proportional to $\textbf{Q}_{\boldsymbol{\theta}} \boldsymbol{\Sigma}^{-1} \textbf{Q}_{\boldsymbol{\theta}}$, with the coefficient $C$ equal to $ C = (1 + \beta \tau)/(\beta \tau)$. As LDA is indeed supervised, this value thus serves as a lower limit for the constant $C$ in the current unsupervised estimation scenario, since one cannot really expect to surpass the performance of LDA in the absence of label information.

As our second result of this section, we derive a ``shortcut'' for finding the constant $C$ for affine equivariant estimators of a particular form. Namely, we assume for the remainder of this section that
\begin{align}\label{eq:particular_estimator_form}
    \hat{\boldsymbol{\theta}}(\textbf{x}_i) = \hat{\textbf{C}_2}{}^{-1/2}(\textbf{x}_i) \hat{\textbf{u}}(\hat{\textbf{C}_2}{}^{-1/2}(\textbf{x}_i) ( \textbf{x}_i - \Bar{\textbf{x}} ) ),
\end{align}
where $\hat{\textbf{u}}$ is a unit-length estimator/functional that transforms as $\hat{\textbf{u}}(\textbf{O} \textbf{x}_i) = \textbf{O} \hat{\textbf{u}}(\textbf{x}_i) $ for any orthogonal $p \times p$ matrix $\textbf{O}$. Theorem 2.1 in \cite{ilmonen2012invariant} can be used to show that any such estimator $\hat{\boldsymbol{\theta}}$ is indeed affine equivariant, see also the proof of Lemma~\ref{lem:ae_method_of_moments}.

\begin{theorem}\label{theo:shortcut}
    In addition to the assumptions of Theorem \ref{theo:form_of_the_limiting_cov}, assume that (iii) $\hat{\boldsymbol{\theta}}(\textbf{x}_i)$ is of the form \eqref{eq:particular_estimator_form}, and (iv) for every $\textbf{m}$, we have
    \begin{align*}
        \hat{\textbf{r}} := \sqrt{n} \left\{ \hat{\textbf{u}}(\hat{\textbf{C}_2}{}^{-1/2}(\textbf{z}_i) ( \textbf{z}_i - \Bar{\textbf{z}} ) ) - \frac{\textbf{m}}{\|\textbf{m}\|} \right\} = \mathcal{O}_P(1),
    \end{align*}
    where $\textbf{z}_i \sim \alpha_1 \mathcal{N}_p(- \alpha_2 \textbf{m}, \textbf{I}_p) + \alpha_2 \mathcal{N}_p(\alpha_1 \textbf{m}, \textbf{I}_p)$.
    Then, as $n \rightarrow \infty$, we have
    \begin{align*}
         - \frac{1}{1 + \sqrt{1 + \beta \tau}} \left( \frac{\textbf{m}}{\|\textbf{m}\|} \otimes \textbf{t} \right)\trans   \sqrt{n} \mathrm{vec} (\hat{\textbf{C}}_2(\textbf{z}_i) - \textbf{C}_2(\textbf{z})) + \sqrt{1 + \beta \tau} \cdot \textbf{t}\trans  \hat{\textbf{r}} \rightsquigarrow \mathcal{N}(0, C),
    \end{align*}
    where $\textbf{t} \in \mathbb{R}^p$ is any unit-length vector satisfying $\textbf{m}\trans  \textbf{t} = 0$.
\end{theorem}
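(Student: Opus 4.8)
The plan is to exploit affine equivariance to pass to a canonical version of the model, where the limiting covariance of Theorem~\ref{theo:form_of_the_limiting_cov} collapses to a scalar multiple of a fixed projection, and then to read off $C$ as the asymptotic variance of a single scalar statistic. First I would use the affine equivariance of $\hat{\boldsymbol{\theta}}$ together with Theorem~\ref{theo:form_of_the_limiting_cov} to argue that $C(\textbf{h},\boldsymbol{\Sigma})$ depends on the model only through $\tau$ (and the fixed weights): the limiting experiment in the model $(\textbf{h},\boldsymbol{\Sigma})$ is the $\boldsymbol{\Sigma}^{-1/2}$-image of the one attached to $\textbf{z}_i \sim \alpha_1\mathcal{N}_p(-\alpha_2\textbf{m},\textbf{I}_p) + \alpha_2\mathcal{N}_p(\alpha_1\textbf{m},\textbf{I}_p)$ with $\textbf{m} := \boldsymbol{\Sigma}^{-1/2}\textbf{h}$, for which $\|\textbf{m}\|^2 = \tau$, and carrying this transformation through the covariance form of Theorem~\ref{theo:form_of_the_limiting_cov} shows that the constant is unchanged. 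In the canonical model $\boldsymbol{\Sigma} = \textbf{I}_p$ and $\boldsymbol{\theta} = \textbf{m}$, so the limiting covariance reduces to $C\,\textbf{Q}_{\textbf{m}}$; hence for any unit $\textbf{t}$ with $\textbf{m}\trans\textbf{t} = 0$ we get $\textbf{t}\trans(C\,\textbf{Q}_{\textbf{m}})\textbf{t} = C$, and it suffices to prove that the stated linear combination equals $\textbf{t}\trans\sqrt{n}\{\hat{\boldsymbol{\theta}}(\textbf{z}_i)/\|\hat{\boldsymbol{\theta}}(\textbf{z}_i)\| - \textbf{m}/\|\textbf{m}\|\}$ up to $o_P(1)$. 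Along the way I would record that $\textbf{C}_2(\textbf{z}) = \textbf{I}_p + \beta\textbf{m}\textbf{m}\trans$, and that by assumption (iv) the inner functional converges to $\textbf{m}/\|\textbf{m}\|$, so the population value of $\hat{\boldsymbol{\theta}}(\textbf{z}_i)$ is $\boldsymbol{\theta}_0 := \textbf{C}_2^{-1/2}(\textbf{z})\,\textbf{m}/\|\textbf{m}\| = (1+\beta\tau)^{-1/2}\textbf{m}/\|\textbf{m}\|$.

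The core of the argument is a two-stage linearization of $\hat{\boldsymbol{\theta}}(\textbf{z}_i) = \hat{\textbf{C}}_2^{-1/2}\hat{\textbf{v}}$, where $\hat{\textbf{v}} := \hat{\textbf{u}}(\hat{\textbf{C}}_2^{-1/2}(\textbf{z}_i - \bar{\textbf{z}}))$ and assumption (iv) supplies $\sqrt{n}(\hat{\textbf{v}} - \textbf{m}/\|\textbf{m}\|) = \hat{\textbf{r}} = \mathcal{O}_P(1)$. First, the delta method applied to $\textbf{v}\mapsto\textbf{v}/\|\textbf{v}\|$ at $\boldsymbol{\theta}_0$ gives $\textbf{t}\trans\sqrt{n}\{\hat{\boldsymbol{\theta}}/\|\hat{\boldsymbol{\theta}}\| - \textbf{m}/\|\textbf{m}\|\} = \|\boldsymbol{\theta}_0\|^{-1}\textbf{t}\trans\textbf{Q}_{\textbf{m}}\sqrt{n}(\hat{\boldsymbol{\theta}} - \boldsymbol{\theta}_0) + o_P(1) = \sqrt{1+\beta\tau}\,\textbf{t}\trans\sqrt{n}(\hat{\boldsymbol{\theta}} - \boldsymbol{\theta}_0) + o_P(1)$, where I used $\textbf{t}\trans\textbf{Q}_{\textbf{m}} = \textbf{t}\trans$ and $\|\boldsymbol{\theta}_0\| = (1+\beta\tau)^{-1/2}$. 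Second, expanding the product to first order yields $\sqrt{n}(\hat{\boldsymbol{\theta}} - \boldsymbol{\theta}_0) = \textbf{D}\,\textbf{m}/\|\textbf{m}\| + \textbf{C}_2^{-1/2}(\textbf{z})\,\hat{\textbf{r}} + o_P(1)$, where $\textbf{D}$ is the Fr\'echet derivative of $\textbf{M}\mapsto\textbf{M}^{-1/2}$ at $\textbf{M} = \textbf{C}_2(\textbf{z})$ evaluated in the direction $\textbf{E} := \sqrt{n}(\hat{\textbf{C}}_2 - \textbf{C}_2(\textbf{z}))$; all remainders are $o_P(1)$ since $\textbf{E}$ and $\hat{\textbf{r}}$ are $\mathcal{O}_P(1)$. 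Because $\textbf{t}\perp\textbf{m}$ and $\textbf{C}_2^{-1/2}(\textbf{z}) = \textbf{I}_p + \{(1+\beta\tau)^{-1/2}-1\}\textbf{P}_{\textbf{m}}$, the second term contributes exactly $\textbf{t}\trans\hat{\textbf{r}}$.

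The step I expect to be the main obstacle is evaluating the first term $\textbf{t}\trans\textbf{D}\,\textbf{m}/\|\textbf{m}\|$, i.e. differentiating the inverse matrix square root. Here I would diagonalize $\textbf{C}_2(\textbf{z}) = \textbf{I}_p + \beta\textbf{m}\textbf{m}\trans$, whose eigenvalues are $1+\beta\tau$ along $\textbf{m}/\|\textbf{m}\|$ and $1$ on $\textbf{m}^\perp$, and apply the Daleckii--Krein (Loewner) formula for the derivative of a primary matrix function: the relevant off-diagonal divided difference of $f(\lambda) = \lambda^{-1/2}$ between the eigenvalues $1$ (for $\textbf{t}$) and $1+\beta\tau$ (for $\textbf{m}$) is $\{1 - (1+\beta\tau)^{-1/2}\}/(-\beta\tau)$, which simplifies to $-\{\sqrt{1+\beta\tau}\,(1+\sqrt{1+\beta\tau})\}^{-1}$ after writing $\beta\tau = (\sqrt{1+\beta\tau}-1)(\sqrt{1+\beta\tau}+1)$. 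Using the identity $\textbf{a}\trans\textbf{E}\textbf{b} = (\textbf{b}\otimes\textbf{a})\trans\mathrm{vec}(\textbf{E})$ with $\textbf{a} = \textbf{t}$ and $\textbf{b} = \textbf{m}/\|\textbf{m}\|$, this gives $\textbf{t}\trans\textbf{D}\,\textbf{m}/\|\textbf{m}\| = -\{\sqrt{1+\beta\tau}\,(1+\sqrt{1+\beta\tau})\}^{-1}(\textbf{m}/\|\textbf{m}\| \otimes \textbf{t})\trans\sqrt{n}\,\mathrm{vec}(\hat{\textbf{C}}_2 - \textbf{C}_2(\textbf{z}))$.

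Substituting both terms into the delta-method identity and multiplying by $\sqrt{1+\beta\tau}$ collapses the leading factor to $-(1+\sqrt{1+\beta\tau})^{-1}$ and turns the $\hat{\textbf{r}}$ term into $\sqrt{1+\beta\tau}\,\textbf{t}\trans\hat{\textbf{r}}$, reproducing exactly the displayed linear combination. Since that combination equals $\textbf{t}\trans\sqrt{n}\{\hat{\boldsymbol{\theta}}/\|\hat{\boldsymbol{\theta}}\| - \textbf{m}/\|\textbf{m}\|\} + o_P(1)$, whose limiting distribution is $\mathcal{N}(0, C)$ by the first paragraph, the claim follows. The two places demanding care are the justification that $C$ is genuinely invariant under the reduction to the canonical model (so that the $\tau$ appearing in the statement is the correct one) and the bookkeeping in the matrix-function derivative; both the delta-method and vectorization steps are otherwise routine.
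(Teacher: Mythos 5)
Your proposal is correct and follows essentially the same route as the paper: reduce to the standardized mixture, pass between the normalized and unnormalized estimators using $\textbf{t}\perp\textbf{m}$, split $\sqrt{n}(\hat{\boldsymbol{\theta}}-\boldsymbol{\theta}_0)$ into the $\hat{\textbf{C}}_2^{-1/2}$-perturbation term and the $\hat{\textbf{r}}$ term, and identify the limit with $\mathcal{N}(0,C)$ via Theorem~\ref{theo:form_of_the_limiting_cov}. The only (cosmetic) difference is that you differentiate $\textbf{M}\mapsto\textbf{M}^{-1/2}$ with the Daleckii--Krein divided-difference formula, whereas the paper linearizes $\hat{\textbf{C}}_2^{-1/2}\hat{\textbf{C}}_2\hat{\textbf{C}}_2^{-1/2}=\textbf{I}_p$ and inverts the resulting Kronecker sum via Lemma~\ref{lem:kronecker_sum_inverse}; both yield the same coefficient $-\{\sqrt{1+\beta\tau}(1+\sqrt{1+\beta\tau})\}^{-1}$.
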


Theorem \ref{theo:shortcut} \revision{states} that if one has for $\hat{\textbf{r}}$ a linearization of the form $\hat{\textbf{r}} = (1\sqrt{n}) \sum_{i = 1}^n [ g(\textbf{z}_i) - \mathrm{E}\{ g(\textbf{z}) \} ] + o_P(1)$ for some $g:\mathbb{R}^p \to \mathbb{R}^p$, then the univariate central limit theorem can be used to find $C$. We use this result (or its suitable variant) to find the constants $C$ for all the methods considered in the subsequent sections.

We conclude the section by examining more closely the transformation $\textbf{x} \mapsto \textbf{C}_2^{-1/2}(\textbf{x}) \{ \textbf{x} - \mathrm{E}(\textbf{x}) \}$ that was used also in \eqref{eq:particular_estimator_form}. This mapping, which is known as standardization or ``whitening'' is typically used as preprocessing in many multivariate methods \cite{NordhausenRuizGazen2022}. One of its benefits is that if some orthogonally equivariant methodology is applied to whitened data, then the full procedure is affine equivariant, see the proof of Lemma \ref{lem:ae_method_of_moments} for an example of this. From a heuristic viewpoint, the role of the standardization is to remove from the data any effects that are artifacts of the used coordinate system, to allow better focusing on the deeper features of the data. Our next result demonstrated this fact in the context of the normal mixture \eqref{eq:model_mixture}. The result is rather simple but, as far as we are aware, previously unknown, most likely due to the non-identifiability of the result up to orthogonal transformations.

\begin{theorem}\label{theo:whitening}
    We have $\textbf{C}_2^{-1/2}(\textbf{x}) \{ \textbf{x} - \mathrm{E}(\textbf{x}) \} = \textbf{O} \textbf{z}$ for some orthogonal $p \times p$ matrix $\textbf{O}$ and
    \begin{align*}
        \textbf{z}  \sim \alpha_1 \mathcal{N}_p \left( - \alpha_2 \sqrt{\frac{\tau}{1  + \beta \tau}} \textbf{w}, \textbf{I}_p - \frac{\beta \tau}{1 + \beta \tau} \textbf{w} \textbf{w}\trans  \right) + \alpha_2 \mathcal{N}_p \left( \alpha_1 \sqrt{\frac{\tau}{1  + \beta \tau}} \textbf{w}, \textbf{I}_p - \frac{\beta \tau}{1 + \beta \tau} \textbf{w} \textbf{w}\trans  \right), 
    \end{align*}
    where $\textbf{w} = \boldsymbol{\Sigma}^{-1/2} \textbf{h}/ \| \boldsymbol{\Sigma}^{-1/2} \textbf{h} \|$.
\end{theorem}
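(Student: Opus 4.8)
The plan is to compute the law of the whitened vector $\textbf{y} := \textbf{C}_2^{-1/2}(\textbf{x})\{\textbf{x} - \mathrm{E}(\textbf{x})\}$ explicitly and then recognize it as an orthogonal image of $\textbf{z}$. First I would record the first two moments of the mixture \eqref{eq:model_mixture}. With $\mathrm{E}(\textbf{x}) = \alpha_1\boldsymbol{\mu}_1 + \alpha_2\boldsymbol{\mu}_2$ and $\textbf{h} = \boldsymbol{\mu}_2 - \boldsymbol{\mu}_1$, the centered component means are $\boldsymbol{\mu}_1 - \mathrm{E}(\textbf{x}) = -\alpha_2\textbf{h}$ and $\boldsymbol{\mu}_2 - \mathrm{E}(\textbf{x}) = \alpha_1\textbf{h}$, and the law of total variance gives $\textbf{C}_2 = \boldsymbol{\Sigma} + \beta\textbf{h}\textbf{h}\trans$. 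Since an affine image of a Gaussian mixture is again a Gaussian mixture with the same weights, $\textbf{y}$ is distributed as $\alpha_1\mathcal{N}_p(-\alpha_2\textbf{v}, \textbf{S}) + \alpha_2\mathcal{N}_p(\alpha_1\textbf{v}, \textbf{S})$, where $\textbf{v} := \textbf{C}_2^{-1/2}\textbf{h}$ and the common whitened covariance is $\textbf{S} := \textbf{C}_2^{-1/2}\boldsymbol{\Sigma}\textbf{C}_2^{-1/2}$.

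Next I would simplify $\textbf{S}$ and the length of $\textbf{v}$. Substituting $\boldsymbol{\Sigma} = \textbf{C}_2 - \beta\textbf{h}\textbf{h}\trans$ yields $\textbf{S} = \textbf{I}_p - \beta\textbf{v}\textbf{v}\trans$, so the entire distribution of $\textbf{y}$ is governed by the single vector $\textbf{v}$. To pin down its length I would apply the Sherman--Morrison formula to $\textbf{C}_2^{-1} = (\boldsymbol{\Sigma} + \beta\textbf{h}\textbf{h}\trans)^{-1}$, obtaining $\|\textbf{v}\|^2 = \textbf{h}\trans\textbf{C}_2^{-1}\textbf{h} = \tau - \beta\tau^2/(1+\beta\tau) = \tau/(1+\beta\tau)$. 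Setting $\hat{\textbf{v}} := \textbf{v}/\|\textbf{v}\|$, the component means of $\textbf{y}$ become $-\alpha_2\sqrt{\tau/(1+\beta\tau)}\,\hat{\textbf{v}}$ and $\alpha_1\sqrt{\tau/(1+\beta\tau)}\,\hat{\textbf{v}}$, while $\textbf{S} = \textbf{I}_p - \{\beta\tau/(1+\beta\tau)\}\hat{\textbf{v}}\hat{\textbf{v}}\trans$.

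Comparing with the target law of $\textbf{z}$, the two distributions agree in every respect except that $\textbf{y}$ uses the unit direction $\hat{\textbf{v}}$ whereas $\textbf{z}$ uses $\textbf{w} = \boldsymbol{\Sigma}^{-1/2}\textbf{h}/\|\boldsymbol{\Sigma}^{-1/2}\textbf{h}\|$. As both are unit vectors, I would fix any orthogonal $\textbf{O}$ with $\textbf{O}\textbf{w} = \hat{\textbf{v}}$ (such a matrix always exists) and set $\textbf{z} := \textbf{O}\trans\textbf{y}$, so that $\textbf{y} = \textbf{O}\textbf{z}$ holds as an identity of random vectors. It then remains to verify that $\textbf{z}$ has the claimed distribution: applying $\textbf{O}\trans$ to $\textbf{y}$ sends each component mean $(\cdot)\hat{\textbf{v}}$ to $(\cdot)\textbf{w}$ and transforms $\textbf{S}$ as $\textbf{O}\trans\textbf{S}\textbf{O} = \textbf{I}_p - \{\beta\tau/(1+\beta\tau)\}\textbf{w}\textbf{w}\trans$, using only $\textbf{O}\trans\textbf{O} = \textbf{I}_p$. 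This reproduces exactly the two component means and the common covariance appearing in the statement, so $\textbf{z}$ is the asserted mixture.

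I expect no genuine obstacle here; the only mild subtlety is keeping the Sherman--Morrison bookkeeping correct so that both $\|\textbf{v}\|^2$ and the rank-one coefficient in $\textbf{S}$ collapse to the stated $\tau/(1+\beta\tau)$ and $\beta\tau/(1+\beta\tau)$. The conceptual content is simply that whitening reduces the covariance to the identity plus a rank-one correction aligned with the whitened mean direction; after this reduction, matching to $\textbf{z}$ is merely a question of rotating one unit vector onto another, and the orthogonal indeterminacy flagged before the statement is precisely the remaining freedom of $\textbf{O}$ on the orthogonal complement of $\textbf{w}$, along which $\textbf{S}$ is isotropic.
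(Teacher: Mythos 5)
Your proposal is correct. It reaches the same conclusion as the paper but by a somewhat different mechanism: the paper forms the matrix $\textbf{R} := \textbf{C}_2^{-1/2}(\textbf{x})\boldsymbol{\Sigma}^{1/2}$, computes $\textbf{R}\trans\textbf{R} = \textbf{I}_p - \{\beta\tau/(1+\beta\tau)\}\textbf{w}\textbf{w}\trans$, and invokes the polar decomposition $\textbf{R} = \textbf{O}\textbf{P}$ with $\textbf{P}$ the symmetric square root of $\textbf{R}\trans\textbf{R}$, so that $\textbf{z} = \textbf{P}\boldsymbol{\Sigma}^{-1/2}\{\textbf{x}-\mathrm{E}(\textbf{x})\}$ is produced explicitly as a function of $\textbf{x}$ and $\textbf{O}$ is the canonical orthogonal polar factor. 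You instead compute the law of the whitened vector directly — means $\mp\alpha_2\textbf{v},\ \alpha_1\textbf{v}$ with $\textbf{v}=\textbf{C}_2^{-1/2}\textbf{h}$ and covariance $\textbf{I}_p-\beta\textbf{v}\textbf{v}\trans$ — reduce everything to the unit direction $\hat{\textbf{v}}$ via the Sherman--Morrison computation $\|\textbf{v}\|^2=\tau/(1+\beta\tau)$, and then take $\textbf{O}$ to be \emph{any} rotation carrying $\textbf{w}$ to $\hat{\textbf{v}}$, defining $\textbf{z}:=\textbf{O}\trans\textbf{y}$. Both arguments are valid and of comparable length; the paper's buys an explicit linear formula for $\textbf{z}$ in terms of $\textbf{x}$ (and a distinguished choice of $\textbf{O}$), while yours is slightly more elementary in that it avoids the polar decomposition and makes transparent exactly where the orthogonal indeterminacy lives. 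You also correctly note the one point that makes the rotation unavoidable, namely that $\textbf{C}_2^{-1/2}\textbf{h}$ need not be parallel to $\boldsymbol{\Sigma}^{-1/2}\textbf{h}$.
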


\begin{figure}
    \centering
    \begin{minipage}{0.33\textwidth}
        \centering
        \includegraphics[width=0.9\textwidth]{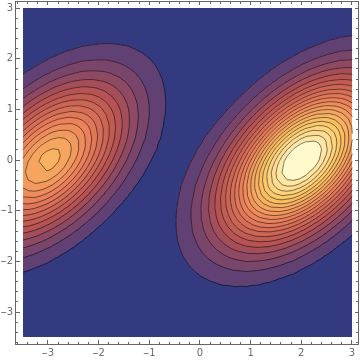} 
    \end{minipage}\hfill
    \begin{minipage}{0.33\textwidth}
        \centering
        \includegraphics[width=0.9\textwidth]{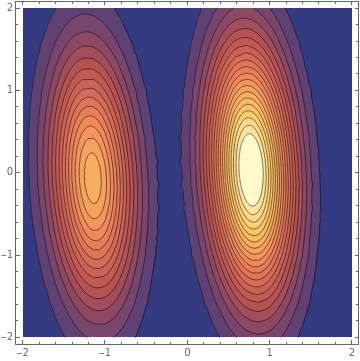} 
    \end{minipage}
    \begin{minipage}{0.33\textwidth}
        \centering
        \includegraphics[width=0.9\textwidth]{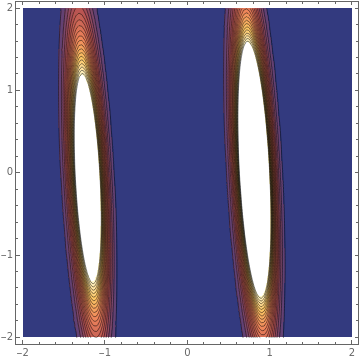} 
    \end{minipage}
    \caption{The three panels contain, from left to right: (a) The contour plot of a normal mixture $\textbf{x}$ in \eqref{eq:model_mixture} for some specific choices of $\alpha_1, \alpha_2, \textbf{h}, \boldsymbol{\Sigma}$. (b) The contour plot of the standardized $\textbf{C}_2^{-1/2}(\textbf{x}) \{ \textbf{x} - \mathrm{E}(\textbf{x}) \}$, which demonstrates the result of Theorem \ref{theo:whitening} that the direction $\textbf{w}$ joining the two groups centers is indeed the one with the least variation. (c) The same as in the previous panel but for a mixture with larger value of $\tau$, yielding an even smaller variation in the direction \textbf{w}.}
    \label{fig:ae_plot}
\end{figure}

Theorem \ref{theo:whitening} shows that, after the standardization, the normal mixture is such that (a) the two groups means are separated along the direction $\textbf{O} \textbf{w}$, and (b) the variation of the data is one in every direction orthogonal to $\textbf{O} \textbf{w}$ and strictly smaller in the direction $\textbf{O} \textbf{w}$. This effect has been demonstrated in Fig. \ref{fig:ae_plot}.  Note that the whitening does not change the standardized distance between the group means. That is, $\{ \tau/(1 + \beta \tau) \} \textbf{w}\trans  [ \textbf{I}_p - \{ \beta \tau / (1 + \beta \tau) \} \textbf{w} \textbf{w}\trans  ]^{-1} \textbf{w} = \textbf{h}\trans  \boldsymbol{\Sigma}^{-1} \textbf{h} $. However, what the whitening does is to essentially position the data in an optimal coordinate system for the detection of the linear discriminant direction $\textbf{O} \textbf{w}$.

\section{Affine equivariant method of moments}\label{sec:estimator_1}


Motivated by the previous section, we next improve the method of moments estimator by obtaining an affine equivariant version of it through the whitened observation described in the previous section. That is, we first define $\textbf{x}_w=\textbf{C}_2^{-1/2}(\textbf{x}) \{ \textbf{x} - \mathrm{E}(\textbf{x}) \}$, and then take $\boldsymbol{\theta}_{R}(\textbf{x}) := \textbf{C}_2^{-1/2}(\textbf{x})\textbf{c}_3(\textbf{x}_w)$. The resulting estimator is then the same one that was already studied in \cite{loperfido2015vector}. \revision{Additionally, \cite{loperfido2021vector} investigated the closely related quantity $\textbf{c}_3(\textbf{x}_w)$, known as the {canonical skewness vector}, in a model-free context.}  

It is not immediately obvious that this results in an affine equivariant estimator, so, for completeness, we provide a proof. \revision{See also \cite[Theorem 3]{loperfido2021vector} for an equivalent result for the canonical skewness vector.}

\begin{lemma}\label{lem:ae_method_of_moments}
    The functional $\boldsymbol{\theta}_{R}(\textbf{x})$ is affine equivariant.
\end{lemma}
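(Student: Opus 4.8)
The plan is to verify directly that $\boldsymbol{\theta}_R$ satisfies the defining property of affine equivariance, namely that $\boldsymbol{\theta}_R(\textbf{A}\trans \textbf{x} + \textbf{b}) = \textbf{A}^{-1} \boldsymbol{\theta}_R(\textbf{x})$ for every invertible $\textbf{A}$ and every $\textbf{b}$. The key idea, suggested by the construction in \eqref{eq:particular_estimator_form} and the remark following it, is that $\boldsymbol{\theta}_R$ is a composition of a whitening step, which absorbs the affine transformation up to an orthogonal factor, and an application of the third moment vector $\textbf{c}_3$ to the whitened data, which is orthogonally equivariant. First I would record the behaviour of the two building blocks under affine transformations. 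Writing $\textbf{y} := \textbf{A}\trans \textbf{x} + \textbf{b}$, the covariance transforms as $\textbf{C}_2(\textbf{y}) = \textbf{A}\trans \textbf{C}_2(\textbf{x}) \textbf{A}$, and hence the inverse square root $\textbf{C}_2^{-1/2}(\textbf{y})$ can be related to $\textbf{C}_2^{-1/2}(\textbf{x})$ only up to an orthogonal matrix; this is precisely the non-uniqueness of the symmetric square root under a basis change.

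The central step is the whitening identity. I would show that the whitened version of $\textbf{y}$ equals an orthogonal transform of the whitened version of $\textbf{x}$, i.e.\ there is an orthogonal $\textbf{O} \equiv \textbf{O}(\textbf{A})$ with
\begin{align*}
    \textbf{y}_w = \textbf{C}_2^{-1/2}(\textbf{y})\{ \textbf{y} - \mathrm{E}(\textbf{y}) \} = \textbf{O}\, \textbf{C}_2^{-1/2}(\textbf{x})\{ \textbf{x} - \mathrm{E}(\textbf{x}) \} = \textbf{O}\, \textbf{x}_w.
\end{align*}
This follows because $\textbf{C}_2^{-1/2}(\textbf{y}) \textbf{A}\trans$ and $\textbf{C}_2^{-1/2}(\textbf{x})$ are both whitening maps for the same centered vector, so they differ by an orthogonal factor; the explicit $\textbf{O}$ can be read off as $\textbf{O} = \textbf{C}_2^{-1/2}(\textbf{y}) \textbf{A}\trans \textbf{C}_2^{1/2}(\textbf{x})$, which one checks is orthogonal using $\textbf{C}_2(\textbf{y}) = \textbf{A}\trans \textbf{C}_2(\textbf{x}) \textbf{A}$. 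I would then invoke the orthogonal equivariance of the third moment vector, namely $\textbf{c}_3(\textbf{O}\textbf{x}_w) = \textbf{O}\,\textbf{c}_3(\textbf{x}_w)$, which is immediate from the definition of $\textbf{c}_3$ since $\textbf{O}$ preserves the mean and centering and passes through the triple product as a single factor.

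Combining these, the final step is a short chain of substitutions:
\begin{align*}
    \boldsymbol{\theta}_R(\textbf{y}) = \textbf{C}_2^{-1/2}(\textbf{y}) \textbf{c}_3(\textbf{y}_w) = \textbf{C}_2^{-1/2}(\textbf{y}) \textbf{c}_3(\textbf{O}\textbf{x}_w) = \textbf{C}_2^{-1/2}(\textbf{y}) \textbf{O}\, \textbf{c}_3(\textbf{x}_w).
\end{align*}
Substituting $\textbf{O} = \textbf{C}_2^{-1/2}(\textbf{y}) \textbf{A}\trans \textbf{C}_2^{1/2}(\textbf{x})$ and simplifying $\textbf{C}_2^{-1/2}(\textbf{y})\textbf{C}_2^{-1/2}(\textbf{y}) = \textbf{C}_2^{-1}(\textbf{y}) = \textbf{A}^{-1}\textbf{C}_2^{-1}(\textbf{x})(\textbf{A}\trans)^{-1}$ collapses the expression to $\textbf{A}^{-1}\textbf{C}_2^{-1/2}(\textbf{x})\textbf{c}_3(\textbf{x}_w) = \textbf{A}^{-1}\boldsymbol{\theta}_R(\textbf{x})$, as required. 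The main obstacle, and the step that deserves the most care, is the whitening identity: because the symmetric inverse square root is only defined up to left multiplication by an orthogonal matrix, one must be precise about which square roots are being used and confirm that the discrepancy factor $\textbf{O}$ is genuinely orthogonal and that the orthogonal equivariance of $\textbf{c}_3$ is applied to the correct argument. Alternatively, since this is exactly the setting of \cite{ilmonen2012invariant} and the estimator has the form \eqref{eq:particular_estimator_form}, I could shorten the argument by appealing to Theorem 2.1 there with $\hat{\textbf{u}} = \textbf{c}_3$, but I would still spell out the whitening identity above to keep the proof self-contained.
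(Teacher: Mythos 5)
Your proposal is correct and follows essentially the same route as the paper: both rest on the whitening identity $(\textbf{A}\trans\textbf{x}+\textbf{b})_w = \textbf{O}\,\textbf{x}_w$ for an orthogonal $\textbf{O}$ (your explicit $\textbf{O} = \textbf{C}_2^{-1/2}(\textbf{y})\textbf{A}\trans\textbf{C}_2^{1/2}(\textbf{x})$ coincides with the matrix $\textbf{U}$ the paper imports from \cite{ilmonen2012invariant}), the orthogonal equivariance of $\textbf{c}_3$, and a final cancellation. The only cosmetic difference is that the paper eliminates the orthogonal factor by exploiting the symmetry of $\textbf{C}_2^{-1/2}(\textbf{A}\trans\textbf{x}+\textbf{b})$ to write it as $\textbf{A}^{-1}\textbf{C}_2^{-1/2}(\textbf{x})\textbf{U}\trans$, whereas you substitute the explicit formula for $\textbf{O}$ and simplify directly.
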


Having established the affine equivariance, we next show that $\boldsymbol{\theta}_{R}(\textbf{x})$ indeed estimates $\boldsymbol{\theta}$, up to scale, obtaining a quantitative version of Theorem 1 in \cite{loperfido2015vector}. Lemma \ref{lem:ae_method_of_moments} simplifies this task by essentially allowing us to consider only the case $\boldsymbol{\Sigma} = \textbf{I}_p$.

\begin{lemma}\label{lem:fc_method_of_moments}
    We have
    \begin{align*}
        \boldsymbol{\theta}_{\mathrm{R}}(\textbf{x}) = \frac{\beta \gamma \tau}{(1 + \beta \tau)^2} \boldsymbol{\theta}.
    \end{align*}
\end{lemma}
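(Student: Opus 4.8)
The plan is to evaluate the functional $\boldsymbol{\theta}_{\mathrm{R}}(\textbf{x}) = \textbf{C}_2^{-1/2}(\textbf{x})\,\textbf{c}_3(\textbf{x}_w)$ directly from its definition, exploiting the fact that the whitened vector $\textbf{x}_w$ is itself a centered two-component normal mixture whose two group means are collinear. Following the remark preceding the statement, I would first invoke the affine equivariance of Lemma~\ref{lem:ae_method_of_moments} to reduce to $\boldsymbol{\Sigma} = \textbf{I}_p$: putting $\textbf{y} = \boldsymbol{\Sigma}^{-1/2}\textbf{x}$, equivariance gives $\boldsymbol{\theta}_{\mathrm{R}}(\textbf{x}) = \boldsymbol{\Sigma}^{-1/2}\boldsymbol{\theta}_{\mathrm{R}}(\textbf{y})$, the scalar $\tau = \textbf{h}\trans \boldsymbol{\Sigma}^{-1}\textbf{h}$ is unchanged, and $\textbf{y}$ is a mixture with common covariance $\textbf{I}_p$ and mean difference $\textbf{h}_y := \boldsymbol{\Sigma}^{-1/2}\textbf{h}$. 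Proving $\boldsymbol{\theta}_{\mathrm{R}}(\textbf{y}) = \{\beta\gamma\tau/(1+\beta\tau)^2\}\textbf{h}_y$ then yields the claim upon multiplying by $\boldsymbol{\Sigma}^{-1/2}$, since $\boldsymbol{\Sigma}^{-1/2}\textbf{h}_y = \boldsymbol{\Sigma}^{-1}\textbf{h} = \boldsymbol{\theta}$.

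In the reduced problem I would first record the elementary second moment $\textbf{C}_2(\textbf{y}) = \textbf{I}_p + \beta\textbf{h}_y\textbf{h}_y\trans$, obtained by adding the within-group covariance $\textbf{I}_p$ to the between-group term $(\alpha_1\alpha_2^2 + \alpha_2\alpha_1^2)\textbf{h}_y\textbf{h}_y\trans = \beta\textbf{h}_y\textbf{h}_y\trans$. As $\textbf{C}_2(\textbf{y})$ is a rank-one perturbation of the identity along $\textbf{h}_y$, its inverse square root equals the identity on $\textbf{Q}_{\textbf{h}_y}$ and multiplication by $(1+\beta\tau)^{-1/2}$ on $\textbf{P}_{\textbf{h}_y}$; in particular $\textbf{C}_2^{-1/2}(\textbf{y})\textbf{h}_y = (1+\beta\tau)^{-1/2}\textbf{h}_y$.

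The core step is the third moment. Applying the fixed matrix $\textbf{C}_2^{-1/2}(\textbf{y})$ to the centered mixture shows that $\textbf{y}_w$ is again a centered two-component normal mixture, with component means $\tilde{\textbf{d}}_1 = -\alpha_2(1+\beta\tau)^{-1/2}\textbf{h}_y$ and $\tilde{\textbf{d}}_2 = \alpha_1(1+\beta\tau)^{-1/2}\textbf{h}_y$ and some common covariance. Conditioning on the component and using the Gaussian moment identities, I would establish that for any two-group normal mixture with a common covariance the third central moment vector is $\sum_k \alpha_k \|\tilde{\textbf{d}}_k\|^2 \tilde{\textbf{d}}_k$; the purely Gaussian contributions are each proportional to $\tilde{\textbf{d}}_k$ and cancel because $\sum_k \alpha_k \tilde{\textbf{d}}_k = \textbf{0}$. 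Substituting the two means and using $\alpha_1\alpha_2(\alpha_1^2 - \alpha_2^2) = \beta\gamma$ gives $\textbf{c}_3(\textbf{y}_w) = \beta\gamma\,\tau\,(1+\beta\tau)^{-3/2}\textbf{h}_y$, and premultiplying by $\textbf{C}_2^{-1/2}(\textbf{y})$ produces $\boldsymbol{\theta}_{\mathrm{R}}(\textbf{y}) = \{\beta\gamma\tau/(1+\beta\tau)^2\}\textbf{h}_y$, as required. I note that the same chain of equalities works for general $\boldsymbol{\Sigma}$ without the reduction, the only changes being that the means of $\textbf{x}_w$ become collinear with $\textbf{C}_2^{-1/2}(\textbf{x})\textbf{h}$, that $\|\textbf{C}_2^{-1/2}(\textbf{x})\textbf{h}\|^2 = \textbf{h}\trans \textbf{C}_2^{-1}(\textbf{x})\textbf{h} = \tau/(1+\beta\tau)$, and that $\textbf{C}_2^{-1}(\textbf{x})\textbf{h} = \boldsymbol{\theta}/(1+\beta\tau)$, both of the last two following from the Sherman--Morrison identity applied to $\textbf{C}_2(\textbf{x}) = \boldsymbol{\Sigma} + \beta\textbf{h}\textbf{h}\trans$.

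I expect the main obstacle to be the third-moment identity: one must verify that, after whitening, the skewness of $\textbf{y}_w$ is generated entirely by the separation of its group means and is unaffected by the (now non-spherical) common within-group covariance. This rests on the centering identity $\sum_k \alpha_k \tilde{\textbf{d}}_k = \textbf{0}$, after which everything reduces to collecting scalar factors. A secondary point requiring care is the affine reduction itself, where one must confirm that $\tau$ is invariant and that the proportionality constant is carried through the transformation $\boldsymbol{\theta}_{\mathrm{R}}(\textbf{x}) = \boldsymbol{\Sigma}^{-1/2}\boldsymbol{\theta}_{\mathrm{R}}(\textbf{y})$ unchanged, so that the final constant is exactly $\beta\gamma\tau/(1+\beta\tau)^2$.
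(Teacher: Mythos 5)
Your proposal is correct and follows essentially the same route as the paper: reduce to $\boldsymbol{\Sigma} = \textbf{I}_p$ via the affine equivariance of Lemma~\ref{lem:ae_method_of_moments}, compute $\textbf{C}_2^{-1/2}$ as a rank-one perturbation of the identity, and evaluate $\textbf{c}_3(\textbf{x}_w) = \beta\gamma\tau(1+\beta\tau)^{-3/2}\textbf{m}$ for the whitened mixture before applying $\textbf{C}_2^{-1/2}$ once more. The only cosmetic difference is that you derive the third-moment identity directly from the conditional Gaussian moments and the centering cancellation $\sum_k \alpha_k \tilde{\textbf{d}}_k = \textbf{0}$, whereas the paper cites its moment-gathering lemma for the same fact.
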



The corresponding sample estimator is obtained by simply replacing the population moments with their empirical counterparts and we denote it by $\hat{\boldsymbol{\theta}}_{\mathrm{R}}$. Its affine equivariance follows similary as for $\boldsymbol{\theta}_{R}(\textbf{x})$ in Lemma \ref{lem:ae_method_of_moments}. Loperfido did not consider the asymptotic properties of the estimator in \cite{loperfido2015vector} and, to complement his work, we next derive the limiting distribution of $\hat{\boldsymbol{\theta}}_R$. In presenting this and the remaining limiting covariance matrices, we use the notation that
\begin{align}\label{eq:c0}
    C_0 := (1 + \beta \tau) \frac{\beta \tau^2 + 6 \beta \tau + 2}{\beta^2 (1 - 4 \beta) \tau^3}.
\end{align}
The significance behind the quantity $C_0$ is that it is the asymptotic $C$-constant of the estimators discussed later in Sections~\ref{sec:estimator_2} and \ref{sec:jade} (see Theorems \ref{theo:limiting_2} and \ref{theo:limiting_3}). As such, Theorem \ref{theo:limiting_st_moment} below reveals that the estimator $\hat{\boldsymbol{\theta}}_\mathrm{R}$ is strictly less efficient than either of these. We note that, interestingly, the $C$-constant corresponding to the skewness-based projection pursuit studied in \cite[Theorem 3]{radojivcic2021large} is also equal to $C_0$.

\begin{theorem}\label{theo:limiting_st_moment}
We have, as $n \rightarrow \infty$,
\begin{align*}
    \sqrt{n} \left( \frac{\hat{\boldsymbol{\theta}}_\mathrm{R}}{\| \hat{\boldsymbol{\theta}}_{\mathrm{R}} \|} - \frac{\boldsymbol{\theta}}{\| \boldsymbol{\theta} \|} \right) \rightsquigarrow \mathcal{N}_p \left( \textbf{0},  C \frac{ \tau }{\| \boldsymbol{\theta} \|^2} \textbf{Q}_{\boldsymbol{\theta}} \boldsymbol{\Sigma}^{-1} \textbf{Q}_{\boldsymbol{\theta}} \right) ,
\end{align*}
where
\begin{align*}
    C = C_0 + \frac{2(p + 1)(1 + \beta \tau)^4 }{\beta^2 (1 - 4 \beta) \tau^3 }.
\end{align*}
\end{theorem}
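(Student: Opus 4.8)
The plan is to lean on Theorem \ref{theo:form_of_the_limiting_cov}, which already guarantees that the limiting covariance of $\hat{\boldsymbol{\theta}}_{\mathrm{R}}/\|\hat{\boldsymbol{\theta}}_{\mathrm{R}}\|$ has the claimed shape $C\,(\tau/\|\boldsymbol{\theta}\|^2)\,\textbf{Q}_{\boldsymbol{\theta}}\boldsymbol{\Sigma}^{-1}\textbf{Q}_{\boldsymbol{\theta}}$, so the whole task reduces to identifying the scalar $C$, for which Theorem \ref{theo:shortcut} is tailor-made. First I would verify the hypotheses: affine equivariance is Lemma \ref{lem:ae_method_of_moments}; the required $\sqrt{n}$-normality follows from the joint central limit theorem for $(\hat{\textbf{C}}_2,\hat{\textbf{c}}_3)$ and the delta method; and the form \eqref{eq:particular_estimator_form} is obtained by writing $\hat{\boldsymbol{\theta}}_{\mathrm{R}} = \|\hat{\textbf{c}}_3(\textbf{x}_w)\|\,\hat{\textbf{C}}_2{}^{-1/2}\hat{\textbf{u}}$ with the orthogonally equivariant unit functional $\hat{\textbf{u}}(\cdot) = \hat{\textbf{c}}_3(\cdot)/\|\hat{\textbf{c}}_3(\cdot)\|$, the outer scalar being irrelevant after normalization. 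Then $C$ is exactly the variance of the scalar limit in Theorem \ref{theo:shortcut}, and the remainder of the proof is its evaluation.

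The core step is the linearization $\hat{\textbf{r}}$ of $\hat{\textbf{u}}$ at the sample-whitened canonical data $\textbf{z}_{w,i} = \hat{\textbf{C}}_2{}^{-1/2}(\textbf{z}_i - \bar{\textbf{z}})$. Writing $f(\textbf{v}) = \|\textbf{v}\|^2\textbf{v}$ so that $\hat{\textbf{c}}_3(\textbf{z}_w) = n^{-1}\sum_i f(\textbf{z}_{w,i})$, I would expand $\textbf{z}_{w,i}$ about the population-whitened vector $\textbf{y}_i = \textbf{C}_2^{-1/2}(\textbf{z}_i - \mathrm{E}\textbf{z})$ and collect three asymptotically linear contributions to $\sqrt{n}\{\hat{\textbf{c}}_3(\textbf{z}_w) - \textbf{c}_3(\textbf{z}_w)\}$: (I) the intrinsic third-moment fluctuation $n^{-1/2}\sum_i\{f(\textbf{y}_i) - \mathrm{E} f(\textbf{y})\}$; (II) a whitening term carried by $\sqrt{n}(\hat{\textbf{C}}_2{}^{-1/2} - \textbf{C}_2^{-1/2})$ via the mean Jacobian $\mathrm{E}[Df(\textbf{y})] = (p+2)\textbf{I}_p$ together with the derivative of the matrix inverse square root; and (III) a centering term $-(p+2)\textbf{C}_2^{-1/2}\sqrt{n}(\bar{\textbf{z}} - \mathrm{E}\textbf{z})$. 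Normalizing to unit length projects these onto $\textbf{Q}_{\textbf{m}}$ and divides by $\|\textbf{c}_3(\textbf{z}_w)\| = \beta\gamma\tau^{3/2}(1+\beta\tau)^{-3/2}$, a value I would read off from Lemma \ref{lem:fc_method_of_moments} and Theorem \ref{theo:whitening}.

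To make (II) and (III) explicit I would use the orthogonal invariance already present in Theorem \ref{theo:shortcut} and fix the canonical frame $\textbf{m} = \sqrt{\tau}\,\textbf{e}_1$, $\textbf{t} = \textbf{e}_2$, in which $\textbf{C}_2 = \textbf{I}_p + \beta\tau\,\textbf{e}_1\textbf{e}_1\trans$ is diagonal with only two distinct eigenvalues; the derivative of $\textbf{A}\mapsto\textbf{A}^{-1/2}$ then diagonalizes explicitly. A short moment calculation shows that, after contracting with $\textbf{t}$, the whitening contribution to $\hat{\textbf{r}}$ collapses to the single entry $(\hat{\textbf{C}}_2-\textbf{C}_2)_{21} = \textbf{t}\trans(\hat{\textbf{C}}_2-\textbf{C}_2)\textbf{m}/\|\textbf{m}\|$ with coefficient $-1/(1+\sqrt{1+\beta\tau})$; together with the explicit $\hat{\textbf{C}}_2$-correction of Theorem \ref{theo:shortcut} (which carries the same coefficient and arises from the outer $\hat{\textbf{C}}_2{}^{-1/2}$ factor), and after applying the weight $\sqrt{1+\beta\tau}$, these collapse into a single clean term $-(\hat{\textbf{C}}_2-\textbf{C}_2)_{21}$. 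Consequently the scalar of Theorem \ref{theo:shortcut} becomes an i.i.d.\ average with summand $s(\textbf{z}) = b\,z_2\{z_1^2/(1+\beta\tau) + \sum_{j\geq 2}z_j^2 - (p+2)\} - z_1 z_2$, where $b = (1+\beta\tau)^2/(\beta\gamma\tau^{3/2})$, and the coordinates split into the one-dimensional skew mixture $z_1$, an independent standard normal $z_2$, and an independent remainder $\sum_{j\geq 3}z_j^2$.

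Finally, $C = \mathrm{E}[s(\textbf{z})^2]$ is computed by the univariate central limit theorem, using this mutual independence and the mixture moments $\mathrm{E}[z_1^2] = 1+\beta\tau$, $\mathrm{E}[z_1^3] = \beta\gamma\tau^{3/2}$, and $\mathrm{E}[z_1^4] = 3 + 6\beta\tau + \beta(1-3\beta)\tau^2$. Grouping terms separates a dimension-dependent part equal to $2(p+1)b^2 = 2(p+1)(1+\beta\tau)^4/\{\beta^2(1-4\beta)\tau^3\}$ from a dimension-free remainder that simplifies, after substituting $\gamma^2 = 1-4\beta$ and cancelling a cubic factor, to exactly $C_0$. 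I expect the main obstacle to be bookkeeping step (II): correctly propagating the randomness of the whitening matrix through the derivative of the inverse square root and checking that its contraction against $\textbf{t}$ interacts with the explicit $\hat{\textbf{C}}_2$-correction to produce the clean coefficient above. Once the canonical reduction confines this to the single entry $(\hat{\textbf{C}}_2-\textbf{C}_2)_{21}$, the surviving moment algebra is routine.
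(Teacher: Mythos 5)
Your proposal is correct and follows essentially the same route as the paper's proof: the same three-part decomposition of $\sqrt{n}\{\hat{\textbf{c}}_3(\textbf{z}_w)-\textbf{c}_3(\textbf{z}_w)\}$ into intrinsic, whitening and centering contributions, the same use of Lemma~\ref{lem:kronecker_sum_inverse} for the derivative of the inverse square root, the same reduction via the $\textbf{t}$-argument in the canonical frame, and an identical final variance summand $s(\textbf{z})$ with the same mixture moments (the paper merely linearizes $\hat{\textbf{C}}_2^{-1/2}\hat{\textbf{c}}_3$ directly rather than packaging the outer whitening factor through Theorem~\ref{theo:shortcut}). One small imprecision: the whitening term (II) does not enter via the mean Jacobian $\mathrm{E}[Df(\textbf{y})]=(p+2)\textbf{I}_p$ (that applies only to the centering term (III)); it enters via the third-moment contraction $\mathrm{E}[Df(\textbf{y})\,\textbf{N}\,\textbf{z}]$ with $\textbf{N}=\hat{\textbf{C}}_2^{-1/2}-\textbf{C}_2^{-1/2}$, which after projecting out the $\textbf{m}$-proportional part yields exactly the coefficient $-1/(1+\sqrt{1+\beta\tau})$ you state, so your subsequent computation is unaffected.
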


\section{Third order blind identification}\label{sec:estimator_2}

As our second affine equivariant estimator, we consider the third-moment based estimator proposed originally by Loperfido in \cite{loperfido2013skewness}. Denoting again the standardized observation by $\textbf{x}_w = \textbf{C}_2^{-1/2}(\textbf{x}) \{ \textbf{x} - \mathrm{E}(\textbf{x}) \}$, Loperfido defines the estimator $\boldsymbol{\theta}_{\mathrm{L}}(\textbf{x}) = \textbf{C}_2^{-1/2}(\textbf{x}) \textbf{u}(\textbf{x})$, where $\textbf{u}(\textbf{x})$ is the leading unit-length eigenvector of the matrix
\begin{align*}
    \textbf{T}(\textbf{x}_w) := [ \mathrm{E} \{ (\textbf{x}_w \otimes \textbf{x}_w) \textbf{x}_w\trans  \} ]\trans  [ \mathrm{E} \{ (\textbf{x}_w \otimes \textbf{x}_w) \textbf{x}_w\trans  \} ].
\end{align*}
\revision{The vector $\boldsymbol{\theta}_{\mathrm{L}}(\textbf{x})$ was further studied under skew-normal scale mixtures in \cite{arevalillo2021skewness} and under a model-free context in \cite{loperfido2015singular}. The latter connected $\boldsymbol{\theta}_{\mathrm{L}}(\textbf{x})$ to several classical measures of multivariate skewness, with the particular consequence that the vector $\boldsymbol{\theta}_{\mathrm{L}}(\textbf{x})$  does not, in general, coincide with the direction yielding maximal univariate skewness. However, this equivalence holds in some special, structured cases, such as in the current normal location mixture \citep{loperfido2013skewness, radojivcic2021large}, independent component models \cite{loperfido2015singular}, and skew-normal scale mixtures \cite{arevalillo2021skewness}.}

The original work \cite{loperfido2013skewness} did not consider the limiting distribution of the estimator $\boldsymbol{\theta}_{\mathrm{L}}(\textbf{x})$, and the purpose of this section is to derive this result under the normal mixture. But first, we will show an alternative form for the matrix $\textbf{B}(\textbf{x}_w)$ that connects the estimator to fourth-order blind identification (FOBI) \cite{cardoso1989source}, a seminal method of independent component analysis.

\begin{lemma}\label{lem:loperfido}
    Defining $\textbf{T}_k(\textbf{x}) := \mathrm{E} ( \textbf{x} \textbf{x}\trans  \textbf{e}_k\textbf{x}\trans  )$, we have
    \begin{align*}
        \textbf{T}(\textbf{x}_w) = \sum_{k = 1}^p \textbf{T}_k(\textbf{x}_w)^2.
    \end{align*}
\end{lemma}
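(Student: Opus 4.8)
The plan is to prove the identity $\textbf{T}(\textbf{x}_w) = \sum_{k=1}^p \textbf{T}_k(\textbf{x}_w)^2$ by unpacking both sides into explicit entrywise (or componentwise) expressions and matching them. To lighten notation, write $\textbf{y} := \textbf{x}_w$ and recall that $\mathrm{E}\{(\textbf{y}\otimes\textbf{y})\textbf{y}\trans\}$ is a $p^2 \times p$ matrix whose $(\,(i,j),\,\ell\,)$ entry is the third moment $m_{ij\ell} := \mathrm{E}(y_i y_j y_\ell)$, where the row index $(i,j)$ runs over the Kronecker structure. The left-hand side $\textbf{T}(\textbf{y})$ is then the $p\times p$ Gram matrix of this tall matrix, so its $(\ell,\ell')$ entry is $\sum_{i,j} m_{ij\ell}\, m_{ij\ell'}$. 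On the right-hand side, each $\textbf{T}_k(\textbf{y}) = \mathrm{E}(\textbf{y}\textbf{y}\trans \textbf{e}_k \textbf{y}\trans)$ is a $p\times p$ matrix with $(\ell,\ell')$ entry $\mathrm{E}(y_\ell y_k y_{\ell'}) = m_{\ell k \ell'}$.

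The key computation is then to expand $\sum_k \textbf{T}_k(\textbf{y})^2$ entrywise. The $(\ell,\ell')$ entry of $\textbf{T}_k(\textbf{y})^2$ is $\sum_{r} [\textbf{T}_k]_{\ell r} [\textbf{T}_k]_{r \ell'} = \sum_{r} m_{\ell k r}\, m_{r k \ell'}$. Summing over $k$ gives $\sum_{k,r} m_{\ell k r}\, m_{r k \ell'}$. The main step is to observe that, because the third moments $m_{ijk}$ are fully symmetric in their three indices (they are expectations of products of scalar components), we may freely permute indices: $m_{\ell k r} = m_{r \ell k}$ and $m_{r k \ell'} = m_{r \ell' k}$ — more to the point, relabeling the summation indices $(k,r) \mapsto (i,j)$ and using symmetry to reorder each factor, we find $\sum_{k,r} m_{\ell k r}\, m_{r k \ell'} = \sum_{i,j} m_{ij\ell}\, m_{ij\ell'}$, which is exactly the $(\ell,\ell')$ entry of $\textbf{T}(\textbf{y})$ computed above. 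Matching these two expressions entrywise for all $\ell,\ell'$ completes the proof.

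I would present this cleanly by first stating that all third moments appearing are symmetric tensors (so that index order is immaterial), then writing the single display that traces the $(\ell,\ell')$ entry of $\sum_k \textbf{T}_k(\textbf{y})^2$ through the symmetry relabeling to the $(\ell,\ell')$ entry of $\textbf{T}(\textbf{y})$. The one point requiring care is the bookkeeping of the Kronecker row-index $(i,j)$ versus the matrix-product summation index, i.e., verifying that the Gram-matrix double sum $\sum_{i,j}$ over the $p^2$ rows of $\mathrm{E}\{(\textbf{y}\otimes\textbf{y})\textbf{y}\trans\}$ lines up correctly with the double sum $\sum_{k,r}$ coming from the matrix squares. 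I expect this index-matching to be the only genuine obstacle; everything else is a direct consequence of the symmetry of the moment tensor and the definitions, and no probabilistic input beyond the existence of the third moments is needed.
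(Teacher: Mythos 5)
Your proof is correct. Both sides indeed have $(\ell,\ell')$ entry equal to $\sum_{i,j} m_{ij\ell}\, m_{ij\ell'}$, and the index bookkeeping you flag as the only delicate point does go through: the Gram-matrix sum over the $p^2$ Kronecker rows matches the double sum $\sum_{k,r}$ from the matrix squares after the symmetry relabeling. The paper takes a different, more compact route: it introduces two independent copies $\textbf{y},\textbf{z}$ of $\textbf{x}_w$, so that a product of two expectations becomes a single expectation over the pair, and then both sides collapse to the same closed form $\mathrm{E}\{(\textbf{y}\trans\textbf{z})^2\,\textbf{y}\textbf{z}\trans\}$ --- the right-hand side via $\sum_k (\textbf{y}\trans\textbf{e}_k)(\textbf{z}\trans\textbf{e}_k)=\textbf{y}\trans\textbf{z}$ and the left-hand side via $(\textbf{y}\otimes\textbf{y})\trans(\textbf{z}\otimes\textbf{z})=(\textbf{y}\trans\textbf{z})^2$. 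The independent-copy argument is coordinate-free, needs no appeal to the symmetry of the moment tensor, and fits in two lines; your entrywise computation is more elementary and makes completely explicit where the identity comes from (full symmetry of $m_{ijk}$ plus a relabeling), at the cost of the index bookkeeping you describe. Either version is a valid proof of the lemma.
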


The matrices $\textbf{T}_k(\textbf{x}_w)$ can be seen as the third-order counterparts of the matrices $\textbf{B}^{ij}$ \cite[page 379]{miettinen2015fourth}, which are summed over $i, j$ to obtain the matrix used in FOBI. As such, since Loperfido did not name his estimator, we propose calling $\boldsymbol{\theta}_{\mathrm{L}}(\textbf{x})$, due to this connection, as the {third-order blind identification} (TOBI) estimator. See also \cite{virta2015joint} for a similar skewness-based estimator that achieves affine equivariance by first applying FOBI to the data.

The next result shows the Fisher consistency of $\boldsymbol{\theta}_{\mathrm{L}}(\textbf{x})$, serving as a quantitative, population counterpart to \cite[Proposition 3]{loperfido2013skewness}. 

\begin{lemma}\label{lem:tobi_fisher_consistency}
We have,
\begin{align*}
    \boldsymbol{\theta}_L(\textbf{x}) =  s \frac{1}{\{ \tau ( 1 + \beta \tau) \}^{1/2}} \boldsymbol{\theta}, 
\end{align*}
for some sign $s \in \{ -1, 1 \}$.
\end{lemma}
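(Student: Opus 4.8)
The plan is to exploit Theorem \ref{theo:whitening}, which reduces the study of the whitened vector $\textbf{x}_w = \textbf{C}_2^{-1/2}(\textbf{x})\{\textbf{x} - \mathrm{E}(\textbf{x})\}$ to the canonical mixture $\textbf{z}$ via $\textbf{x}_w = \textbf{O}\textbf{z}$ for some orthogonal $\textbf{O}$, where the distribution of $\textbf{z}$ is rotationally symmetric about the single direction $\textbf{w} = \boldsymbol{\Sigma}^{-1/2}\textbf{h}/\|\boldsymbol{\Sigma}^{-1/2}\textbf{h}\|$. The strategy is first to locate the leading eigenvector of $\textbf{T}(\textbf{z})$ and then to transport it back to the original coordinates through the whitening; the second step is what produces the stated constant.

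First I would establish that $\textbf{T}$ is orthogonally equivariant. Writing $\textbf{M}(\textbf{x}) := \mathrm{E}\{(\textbf{x} \otimes \textbf{x})\textbf{x}\trans\}$ so that $\textbf{T}(\textbf{x}) = \textbf{M}(\textbf{x})\trans\textbf{M}(\textbf{x})$, the identity $(\textbf{O}\textbf{z})\otimes(\textbf{O}\textbf{z}) = (\textbf{O}\otimes\textbf{O})(\textbf{z}\otimes\textbf{z})$ gives $\textbf{M}(\textbf{O}\textbf{z}) = (\textbf{O}\otimes\textbf{O})\textbf{M}(\textbf{z})\textbf{O}\trans$, and since $(\textbf{O}\otimes\textbf{O})\trans(\textbf{O}\otimes\textbf{O}) = \textbf{I}_{p^2}$, this collapses to $\textbf{T}(\textbf{O}\textbf{z}) = \textbf{O}\textbf{T}(\textbf{z})\textbf{O}\trans$. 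Consequently the leading unit eigenvector obeys $\textbf{u}(\textbf{x}_w) = \pm\,\textbf{O}\,\textbf{u}(\textbf{z})$, so it suffices to identify the leading eigenvector of $\textbf{T}(\textbf{z})$.

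The heart of the argument is the computation of $\textbf{T}(\textbf{z})$. Decomposing $\textbf{z} = s\textbf{w} + \textbf{v}$ with $s := \textbf{w}\trans\textbf{z}$ and $\textbf{v} := \textbf{Q}_\textbf{w}\textbf{z}$, the component means and covariance in Theorem \ref{theo:whitening} show that, conditionally on the group label, $\textbf{v}$ is a mean-zero Gaussian with covariance $\textbf{Q}_\textbf{w}$ that is identical across both groups and uncorrelated with $s$; hence $\textbf{v}$ is independent of the pair $(s,\text{label})$ and symmetric about the origin, while $\mathrm{E}(s) = 0$. Expanding the third-moment tensor $\mathrm{E}(\textbf{z}\otimes\textbf{z}\otimes\textbf{z})$ of the centered $\textbf{z}$, every term with an odd number of $\textbf{v}$-factors vanishes by symmetry and the two-$\textbf{v}$ terms vanish because $\mathrm{E}(s)=0$ together with $\textbf{v}\perp s$, leaving only $\mathrm{E}(s^3)\,\textbf{w}\otimes\textbf{w}\otimes\textbf{w}$. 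Reading off the slices $\textbf{T}_k(\textbf{z}) = \mathrm{E}(s^3)\,w_k\,\textbf{w}\textbf{w}\trans$ and applying Lemma \ref{lem:loperfido} with $\sum_k w_k^2 = 1$ yields the rank-one form $\textbf{T}(\textbf{z}) = \{\mathrm{E}(s^3)\}^2\,\textbf{P}_\textbf{w}$. Since $s$ is a two-component Gaussian mixture with unequal weights ($\gamma \neq 0$) and distinct means, its skewness $\mathrm{E}(s^3)$ is nonzero, so the unique leading eigenvector is $\pm\textbf{w}$ and $\textbf{u}(\textbf{x}_w) = \pm\,\textbf{O}\textbf{w}$.

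It remains to map this direction back through the whitening. Since $\textbf{x} = \textbf{C}_2^{1/2}\textbf{x}_w + \mathrm{E}(\textbf{x})$ and $\textbf{x}_w = \textbf{O}\textbf{z}$, the component means $\boldsymbol{\mu}_1^w,\boldsymbol{\mu}_2^w$ of $\textbf{x}_w$ equal $\textbf{O}$ times those in Theorem \ref{theo:whitening}, so the mean difference transforms as $\textbf{h} = \textbf{C}_2^{1/2}(\boldsymbol{\mu}_2^w - \boldsymbol{\mu}_1^w) = \{\tau/(1+\beta\tau)\}^{1/2}\,\textbf{C}_2^{1/2}\textbf{O}\textbf{w}$. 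Solving for $\textbf{C}_2^{-1/2}\textbf{O}\textbf{w}$ and applying the Sherman--Morrison formula to $\textbf{C}_2 = \boldsymbol{\Sigma} + \beta\textbf{h}\textbf{h}\trans$ gives $\textbf{C}_2^{-1}\textbf{h} = \boldsymbol{\theta}/(1+\beta\tau)$, whence $\boldsymbol{\theta}_{\mathrm{L}}(\textbf{x}) = \textbf{C}_2^{-1/2}\textbf{u}(\textbf{x}_w) = \pm\,\textbf{C}_2^{-1/2}\textbf{O}\textbf{w} = \pm\{\tau(1+\beta\tau)\}^{-1/2}\boldsymbol{\theta}$, matching the claim with $s = \pm 1$. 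I expect the main obstacle to be the third-moment computation of the preceding paragraph, namely arguing the independence and symmetry of $\textbf{v}$ cleanly enough that the tensor collapses to rank one; the equivariance of $\textbf{T}$ and the Sherman--Morrison back-transformation are routine. A convenient simplification worth recording is that $\mathrm{E}(s^3)$ never enters the final answer, because $\textbf{u}$ is normalized to unit length, so the constant $\{\tau(1+\beta\tau)\}^{-1/2}$ is produced entirely by the whitening geometry.
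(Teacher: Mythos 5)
Your proof is correct, but it is organized quite differently from the paper's. The paper works directly in the original coordinates: writing $\textbf{A} = \textbf{C}_2^{-1/2}$, it invokes the explicit mixture moment formula (from the computations behind Lemma~\ref{lem:moment_gathering_lemma}) to get $\textbf{T}_k(\textbf{A}\{\textbf{x}-\mathrm{E}(\textbf{x})\}) = \beta\gamma(\textbf{e}_k\trans\textbf{A}\textbf{h})\,\textbf{A}\textbf{h}\textbf{h}\trans\textbf{A}$ in one line, sums the squares to obtain $\textbf{T} \propto \textbf{A}\textbf{h}\textbf{h}\trans\textbf{A}$, and then applies Sherman--Morrison to $\textbf{A}^2\textbf{h}$ to produce the constant. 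You instead route through Theorem~\ref{theo:whitening}: you reduce to the canonical mixture $\textbf{z}$ via orthogonal equivariance of $\textbf{T}$, derive the rank-one structure of the third-moment tensor from the decomposition $\textbf{z} = s\textbf{w}+\textbf{v}$ and a symmetry/independence argument (rather than citing a closed-form third moment), and recover the constant from the whitening geometry plus the same Sherman--Morrison identity $\textbf{C}_2^{-1}\textbf{h} = \boldsymbol{\theta}/(1+\beta\tau)$. The paper's route is shorter because the needed moments are already tabulated; yours is more self-contained on the skewness computation and makes transparent \emph{why} the leading eigenvector is the discriminant direction (all skewness of the whitened mixture lives along $\textbf{w}$), at the cost of carrying the orthogonal matrix $\textbf{O}$ through the back-transformation. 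All the individual steps you flag as potential obstacles (conditional independence of $\textbf{v}$ and $s$, nonvanishing of $\mathrm{E}(s^3)$ since $\gamma\neq 0$) do go through, so there is no gap.
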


The arbitrariness of the sign in Lemma \ref{lem:tobi_fisher_consistency} is caused by the fact that eigenvectors are unique (at most) only up to sign. This is taken into account later in our limiting results by considering a sign-corrected version of the sample estimator.

Denoting $\hat{\textbf{C}}_2 \equiv \hat{\textbf{C}}_2(\textbf{x}_i)$,
the sample TOBI-estimator is then defined as $ \hat{\boldsymbol{\theta}}_{\mathrm{L}}(\textbf{x}_i) := \hat{\textbf{C}}{}_2^{-1/2} \hat{\textbf{u}} $, where $\hat{\textbf{u}}$ is any leading eigenvector of the matrix
\begin{align*}
    \hat{\textbf{T}}( \hat{\textbf{C}}_2^{-1/2} ( \textbf{x}_i - \bar{\textbf{x}} )) := \sum_{k = 1}^p \hat{\textbf{T}}_k( \hat{\textbf{C}}_2^{-1/2} ( \textbf{x}_i - \bar{\textbf{x}} ))^2
\end{align*}
where $\hat{\textbf{T}}_k(\textbf{x}_i) := (1/n) \sum_{i=1}^n \textbf{x}_i \textbf{x}_i\trans  \textbf{e}_k \textbf{x}_i\trans $. That the TOBI-estimator is affine equivariant is proven in the next result.



\begin{lemma}\label{lem:tobi_ae}
For any invertible $\textbf{A} \in\mathbb{R}^{p \times p} $ and any $\textbf{b} \in \mathbb{R}^p$, we have, almost surely $    \hat{\boldsymbol{\theta}}_{\mathrm{L}}(\textbf{A}\trans  \textbf{x}_i + \textbf{b}) = s \textbf{A}^{-1} \hat{\boldsymbol{\theta}}_{\mathrm{L}}( \textbf{x}_i )$, for some sign $ s \in \{ -1, 1 \} $.
\end{lemma}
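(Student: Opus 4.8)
The plan is to exploit the two-layer structure of $\hat{\boldsymbol{\theta}}_{\mathrm{L}}$: an outer whitening step $\textbf{x}_i \mapsto \hat{\textbf{C}}_2^{-1/2}(\textbf{x}_i - \bar{\textbf{x}})$ followed by an inner functional (the leading eigenvector of $\hat{\textbf{T}}$) that I will show is orthogonally equivariant. This is exactly the mechanism behind Lemma \ref{lem:ae_method_of_moments} and Theorem 2.1 of \cite{ilmonen2012invariant}, so I would follow that template. Since both $\hat{\textbf{C}}_2$ and the centered observations $\textbf{x}_i - \bar{\textbf{x}}$ are translation invariant, the shift $\textbf{b}$ cancels immediately, and it suffices to treat the map $\textbf{x}_i \mapsto \textbf{A}\trans \textbf{x}_i$.

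Writing $\textbf{S} := \hat{\textbf{C}}_2(\textbf{x}_i)$, the congruence equivariance $\hat{\textbf{C}}_2(\textbf{A}\trans \textbf{x}_i) = \textbf{A}\trans \textbf{S}\textbf{A}$ gives, for the whitened transformed data, $\hat{\textbf{C}}_2^{-1/2}(\textbf{A}\trans \textbf{x}_i)\,\textbf{A}\trans (\textbf{x}_i - \bar{\textbf{x}}) = \textbf{O}\,\textbf{z}_i$, where $\textbf{z}_i := \textbf{S}^{-1/2}(\textbf{x}_i - \bar{\textbf{x}})$ is the whitened original data and $\textbf{O} := (\textbf{A}\trans \textbf{S}\textbf{A})^{-1/2}\textbf{A}\trans \textbf{S}^{1/2}$. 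The direct computation $\textbf{O}\textbf{O}\trans = (\textbf{A}\trans \textbf{S}\textbf{A})^{-1/2}(\textbf{A}\trans \textbf{S}\textbf{A})(\textbf{A}\trans \textbf{S}\textbf{A})^{-1/2} = \textbf{I}_p$ confirms that $\textbf{O}$ is orthogonal. Thus, up to the orthogonal factor $\textbf{O}$, whitening erases the effect of $\textbf{A}$, reducing the problem to how $\hat{\textbf{T}}$ responds to an orthogonal change of the whitened data.

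The main obstacle is to establish the orthogonal equivariance $\hat{\textbf{T}}(\textbf{O}\textbf{z}_i) = \textbf{O}\,\hat{\textbf{T}}(\textbf{z}_i)\,\textbf{O}\trans$. The subtlety is that the individual summands $\hat{\textbf{T}}_k$ are \emph{not} orthogonally equivariant, since they single out the basis vector $\textbf{e}_k$; equivariance is recovered only after summing the squares. Using linearity of the map $\textbf{v} \mapsto (1/n)\sum_i \textbf{z}_i\textbf{z}_i\trans \textbf{v}\textbf{z}_i\trans $ together with $\textbf{O}\trans \textbf{e}_k = \sum_j O_{kj}\textbf{e}_j$, I would first obtain $\hat{\textbf{T}}_k(\textbf{O}\textbf{z}_i) = \textbf{O}\bigl(\sum_j O_{kj}\hat{\textbf{T}}_j(\textbf{z}_i)\bigr)\textbf{O}\trans $, then square and sum over $k$. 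The cross-terms carry the factor $\sum_k O_{kj}O_{kl} = (\textbf{O}\trans \textbf{O})_{jl} = \delta_{jl}$, which collapses the double sum and yields $\sum_k \hat{\textbf{T}}_k(\textbf{O}\textbf{z}_i)^2 = \textbf{O}\bigl(\sum_j \hat{\textbf{T}}_j(\textbf{z}_i)^2\bigr)\textbf{O}\trans $, as claimed. Consequently the eigenvalues of $\hat{\textbf{T}}$ are invariant and its eigenvectors rotate by $\textbf{O}$; since the leading eigenvalue is almost surely simple, the leading eigenvector satisfies $\hat{\textbf{u}}(\textbf{O}\textbf{z}_i) = s\,\textbf{O}\,\hat{\textbf{u}}(\textbf{z}_i)$ for some sign $s \in \{-1,1\}$, the sign being the unavoidable ambiguity in choosing a unit eigenvector.

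Finally, I would combine the two steps. Substituting the relations above into $\hat{\boldsymbol{\theta}}_{\mathrm{L}}(\textbf{A}\trans \textbf{x}_i) = (\textbf{A}\trans \textbf{S}\textbf{A})^{-1/2}\hat{\textbf{u}}(\textbf{O}\textbf{z}_i)$ gives $\hat{\boldsymbol{\theta}}_{\mathrm{L}}(\textbf{A}\trans \textbf{x}_i) = s\,(\textbf{A}\trans \textbf{S}\textbf{A})^{-1}\textbf{A}\trans \textbf{S}^{1/2}\hat{\textbf{u}}(\textbf{z}_i)$, and the algebraic identity $(\textbf{A}\trans \textbf{S}\textbf{A})^{-1}\textbf{A}\trans = \textbf{A}^{-1}\textbf{S}^{-1}$, which follows from $(\textbf{A}^{-1})\trans \textbf{A}\trans = \textbf{I}_p$, simplifies this to $s\,\textbf{A}^{-1}\textbf{S}^{-1/2}\hat{\textbf{u}}(\textbf{z}_i) = s\,\textbf{A}^{-1}\hat{\boldsymbol{\theta}}_{\mathrm{L}}(\textbf{x}_i)$, which is the desired statement. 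The ``almost surely'' qualifier then covers precisely the two genericity conditions invoked along the way: positive definiteness (hence invertibility) of $\textbf{S}$, and simplicity of the leading eigenvalue of $\hat{\textbf{T}}$.
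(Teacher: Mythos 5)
Your proposal is correct and follows the same overall strategy as the paper: translation invariance disposes of $\textbf{b}$, the congruence equivariance of $\hat{\textbf{C}}_2$ reduces the action of $\textbf{A}$ on the whitened data to an orthogonal matrix, the matrix $\hat{\textbf{T}}$ is shown to be orthogonally equivariant, and the almost-sure simplicity of its leading eigenvalue yields the sign-only ambiguity. The one step where you genuinely diverge is the orthogonal equivariance of $\hat{\textbf{T}}$: the paper establishes $\hat{\textbf{T}}(\textbf{V}\textbf{z}_i)=\textbf{V}\hat{\textbf{T}}(\textbf{z}_i)\textbf{V}\trans$ by passing to the pairwise double-sum representation of $\sum_k\hat{\textbf{T}}_k^2$ over sample indices (the empirical analogue of Lemma~\ref{lem:loperfido}), in which the orthogonal factors cancel through the invariance of the inner products $\tilde{\textbf{x}}_k\trans\hat{\textbf{C}}_2^{-1}\tilde{\textbf{x}}_j$; you instead keep the decomposition over coordinates, show that the individual $\hat{\textbf{T}}_k$ mix linearly under $\textbf{O}$, and collapse the cross-terms via $\sum_k O_{kj}O_{kl}=\delta_{jl}$. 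Both arguments are valid and of comparable length; yours makes explicit why equivariance fails for each $\hat{\textbf{T}}_k$ separately but is restored after squaring and summing, while the paper's leverages the identity of Lemma~\ref{lem:loperfido} that is already in place for other purposes. You also construct the orthogonal factor explicitly and verify $\textbf{O}\textbf{O}\trans=\textbf{I}_p$ by hand rather than citing Theorem~2.1 of \cite{ilmonen2012invariant}; the only detail you leave implicit is the reason the leading eigenvalue is almost surely simple, which the paper attributes to the absolute continuity of the normal mixture.
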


Note that the affine equivariance in Lemma \ref{lem:tobi_ae} holds only almost surely as it requires the leading eigenvalue of the matrix $ \hat{\textbf{T}}( \hat{\textbf{C}}{}_2^{-1/2} (\textbf{x}_i - \bar{\textbf{x}}) ) $ to be simple. Proceeding as in the proof of Lemma \ref{lem:tobi_ae}, it is straightforwardly checked that also the population-level TOBI-estimator enjoys the analogous form of affine equivariance (without the ``almost surely''-part).

The main result of this section, the limiting distribution of TOBI, is given next.

\begin{theorem}\label{theo:limiting_2}
We have, as $n \rightarrow \infty$,
\begin{align*}
    \sqrt{n} \left( \frac{\hat{\boldsymbol{\theta}}_{\mathrm{L}}}{\| \hat{\boldsymbol{\theta}}_{\mathrm{L}} \|} - \frac{\boldsymbol{\theta}}{\| \boldsymbol{\theta} \|} \right) \rightsquigarrow \mathcal{N}_p \left( \textbf{0}, C_0 \frac{\tau}{\| \boldsymbol{\theta} \|^2} \textbf{Q}_{\boldsymbol{\theta}} \boldsymbol{\Sigma}^{-1} \textbf{Q}_{\boldsymbol{\theta}} \right),
\end{align*}
where $C_0$ is defined in \eqref{eq:c0}.
\end{theorem}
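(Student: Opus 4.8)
The plan is to identify the constant $C$ via the shortcut of Theorem~\ref{theo:shortcut} and verify $C = C_0$. Three of its four hypotheses are immediate for TOBI: affine equivariance is Lemma~\ref{lem:tobi_ae}; the estimator has exactly the whitened form~\eqref{eq:particular_estimator_form} with $\hat{\mathbf{u}}$ the leading unit eigenvector of $\hat{\mathbf{T}}(\cdot)$; and $\hat{\mathbf{u}}$ is orthogonally equivariant since each $\hat{\mathbf{T}}_k$ is, whence $\hat{\mathbf{T}}(\mathbf{O}\mathbf{x}_i) = \mathbf{O}\hat{\mathbf{T}}(\mathbf{x}_i)\mathbf{O}\trans$ by Lemma~\ref{lem:loperfido}. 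The remaining hypothesis (iv), the $\mathcal{O}_P(1)$-tightness of $\hat{\mathbf{r}}$, as well as the existence of the limiting law in (ii), will fall out of the perturbation expansion below (the latter also using Theorem~\ref{theo:form_of_the_limiting_cov}). I work throughout in the canonical model $\mathbf{z}_i \sim \alpha_1\mathcal{N}_p(-\alpha_2\mathbf{m},\mathbf{I}_p) + \alpha_2\mathcal{N}_p(\alpha_1\mathbf{m},\mathbf{I}_p)$, where $\tau = \|\mathbf{m}\|^2$, $\mathbf{C}_2(\mathbf{z}) = \mathbf{I}_p + \beta\mathbf{m}\mathbf{m}\trans$, and, by Lemma~\ref{lem:tobi_fisher_consistency}, the population leading eigenvector of $\mathbf{T}(\mathbf{z}_w)$ is $\mathbf{u}_1 = \mathbf{m}/\|\mathbf{m}\|$; I pass to the sign-corrected version of $\hat{\mathbf{u}}$ aligned with $\mathbf{u}_1$, as discussed after Lemma~\ref{lem:tobi_fisher_consistency}.

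The core step is a first-order expansion of $\hat{\mathbf{u}}$. By the rotational symmetry of the canonical model about $\mathbf{u}_1$, the $p-1$ trailing eigenvalues of $\mathbf{T}(\mathbf{z}_w)$ coincide at a common value $\lambda$ strictly below the (simple) leading eigenvalue $\lambda_1$, so standard symmetric-matrix eigenvector perturbation applied to $\hat{\mathbf{T}} = \mathbf{T} + \Delta$, $\Delta = \mathcal{O}_P(n^{-1/2})$, yields
\[
    \hat{\mathbf{r}} = \sqrt{n}\,(\hat{\mathbf{u}} - \mathbf{u}_1) = \frac{1}{\lambda_1 - \lambda}\,\mathbf{Q}_{\mathbf{u}_1}\,\sqrt{n}\,\Delta\,\mathbf{u}_1 + o_P(1),
\]
which simultaneously establishes the tightness in (iv). I then linearize $\Delta$: writing $\hat{\mathbf{T}}_k = \mathbf{T}_k + \delta_k$ and using $\mathbf{T} = \sum_k \mathbf{T}_k^2$ from Lemma~\ref{lem:loperfido}, a product-rule expansion gives $\Delta = \sum_k (\delta_k \mathbf{T}_k + \mathbf{T}_k \delta_k) + o_P(n^{-1/2})$. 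Each $\delta_k$ must in turn be expanded, as $\hat{\mathbf{T}}_k$ is evaluated on the whitened sample $\hat{\mathbf{C}}_2^{-1/2}(\mathbf{z}_i)(\mathbf{z}_i - \bar{\mathbf{z}})$ and thus carries two sources of randomness: the empirical third moment and the plug-in whitening. I would propagate the latter by expanding $\hat{\mathbf{C}}_2^{-1/2}$ about $\mathbf{C}_2^{-1/2} = (\mathbf{I}_p + \beta\mathbf{m}\mathbf{m}\trans)^{-1/2}$ and substituting into the cubic statistic defining $\hat{\mathbf{T}}_k$. The outcome is an i.i.d.\ linearization $\hat{\mathbf{r}} = (1/\sqrt{n})\sum_i \{ g(\mathbf{z}_i) - \mathrm{E}\,g(\mathbf{z}) \} + o_P(1)$ for an explicit $g:\mathbb{R}^p \to \mathbb{R}^p$.

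Finally I would feed this, together with the elementary i.i.d.\ linearization of $\sqrt{n}\,\mathrm{vec}(\hat{\mathbf{C}}_2 - \mathbf{C}_2)$, into the left-hand side of the conclusion of Theorem~\ref{theo:shortcut}. Both contributions are then averages of mean-zero i.i.d.\ summands, so the univariate CLT identifies $C$ as the variance of the combined scalar summand; rotational symmetry about $\mathbf{u}_1$ makes this variance independent of the chosen unit $\mathbf{t} \perp \mathbf{m}$, matching the isotropic form of Theorem~\ref{theo:form_of_the_limiting_cov}. The main obstacle lies in this last stage: one must first evaluate in closed form the population matrices $\mathbf{T}_k(\mathbf{z}_w)$ and the gap $\lambda_1 - \lambda$, and then compute the (co)variances of the cubic statistics in $g$ together with their cross-covariances with the $\mathbf{C}_2$-term under the two-component Gaussian mixture. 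It is these mixed third-through-sixth-order mixture moments that must assemble into the numerator $\beta\tau^2 + 6\beta\tau + 2$ and denominator $\beta^2(1 - 4\beta)\tau^3$ of $C_0$; arranging the eigenvector-perturbation and whitening contributions so that their cross terms (and all dependence on $p$) combine cleanly is the delicate bookkeeping.
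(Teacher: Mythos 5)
Your plan follows essentially the same route as the paper: reduce to the standardized mixture via affine equivariance, apply a rank-one eigenvector perturbation (the paper's Lemma~\ref{lem:eigenvector_limdist}; note $\textbf{T}(\textbf{z}_w)$ is exactly rank one, so your trailing eigenvalue $\lambda$ is $0$), linearize $\hat{\textbf{T}}=\sum_k\hat{\textbf{T}}_k^2$ by the product rule while propagating the plug-in whitening, and then feed the resulting i.i.d.\ linearization into Theorem~\ref{theo:shortcut} to read off $C$ as a variance of a cubic in independent mixture/Gaussian coordinates. This is precisely the content of the paper's Lemmas~\ref{lem:tobi_linearization} and proof of Theorem~\ref{theo:limiting_2}, so the proposal is correct in approach, with only the closed-form moment bookkeeping left unexecuted.
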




\section{Novel 3-JADE estimator}\label{sec:jade}

To conclude our collection of estimators, we next propose a novel alternative to $\boldsymbol{\theta}_M$, $\boldsymbol{\theta}_R$ and $\boldsymbol{\theta}_L$. Using the matrices $\textbf{T}_k$, we define this estimator as $\boldsymbol{\theta}_{\mathrm{J}} :=  \textbf{C}_2^{-1/2}(\textbf{x})\textbf{u}$ where
\begin{align}\label{eq:3JADE_optimization}
    \textbf{u}= \argmax_{\textbf{v}\in\mathbb{R}^p, \|\textbf{v}\|=1}\sum_{k=1}^p\left\{ \textbf{v}\trans \textbf{T}_k(\textbf{x}_w) \textbf{v} \right\}^2.
\end{align}
The proof of Lemma \ref{lemma:JADE1} later reveals that the maximizer in \eqref{eq:3JADE_optimization} is indeed unique, up to a sign change. The underlying idea behind $\boldsymbol{\theta}_{\mathrm{J}}$ is that, essentially, it is to TOBI what the classical method known as {joint approximate diagonalization of eigenmatrices} (JADE) \cite{cardoso1993blind} is to FOBI. Whereas FOBI finds the eigendecomposition of a matrix formed by summing fourth cumulant matrices, JADE jointly diagonalizes these cumulant matrices, see \cite{miettinen2015fourth}. The relationship between $\boldsymbol{\theta}_{\mathrm{J}}$ and TOBI is the same, with the exception that we search in \eqref{eq:3JADE_optimization} for a single projection only (JADE finds $p$ simultaneously). As such, we call the estimator $\boldsymbol{\theta}_{\mathrm{J}}$ the (one-step) 3-JADE in the following. The following lemma shows the Fisher consistency of 3-JADE, i.e., that it is capable of estimating the linear discriminant direction.
\begin{lemma}\label{lemma:JADE1}
    We have $\boldsymbol{\theta}_{\mathrm{J}}  = s \{ \tau ( 1 + \beta \tau) \}^{-1/2} \boldsymbol{\theta}$ for some sign $s \in \{ -1, 1 \}$.
\end{lemma}

The sample estimator $\hat{\boldsymbol{\theta}}_{\mathrm{J}}$ is defined analogously, using the sample moments instead of population ones. The next result then shows that the estimator is affine equivariant, and is given without proof as it can be derived using the techniques presented in the Proof of Lemma \ref{lem:tobi_ae}.

\begin{lemma}\label{lem:jade_ae}
For any invertible $\textbf{A} \in\mathbb{R}^{p \times p} $ and any $\textbf{b} \in \mathbb{R}^p$, we have,  $\hat{\boldsymbol{\theta}}_{\mathrm{J}}(\textbf{A}\trans  \textbf{x}_i + \textbf{b}) = \textbf{A}^{-1} \hat{\boldsymbol{\theta}}_{\mathrm{J}}( \textbf{x}_i )$, where 
\begin{align*}
    \hat{\boldsymbol{\theta}}_{\mathrm{J}}( \textbf{x}_i )\in  \argmax_{\textbf{v}\in\mathbb{R}^p, \|\textbf{v}\|=1}\sum_{k=1}^p\left( \textbf{v}\trans \hat{\textbf{T}}_k\textbf{v} \right)^2
\end{align*}
and $\hat{\textbf{T}}_k$ is the sample version of $\textbf{T}_k(\textbf{x}_w)$, $k\in\{1,\dots,p\}$.
\end{lemma}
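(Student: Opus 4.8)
The plan is to follow the same two-stage strategy used in the proof of Lemma~\ref{lem:tobi_ae}: first show that the affine map acts on the whitened data as a pure orthogonal transformation, and then show that the inner 3-JADE functional $\hat{\textbf{u}}$ is orthogonally equivariant. Writing $\textbf{y}_i = \textbf{A}\trans \textbf{x}_i + \textbf{b}$, the sample covariance transforms as $\hat{\textbf{C}}_2(\textbf{y}_i) = \textbf{A}\trans \hat{\textbf{C}}_2(\textbf{x}_i) \textbf{A}$, so that the matrix $\textbf{O} := \hat{\textbf{C}}_2(\textbf{y}_i)^{-1/2} \textbf{A}\trans \hat{\textbf{C}}_2(\textbf{x}_i)^{1/2}$ satisfies $\textbf{O}\textbf{O}\trans = \textbf{I}_p$ and is therefore orthogonal. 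This yields the two identities $\hat{\textbf{C}}_2(\textbf{y}_i)^{-1/2}(\textbf{y}_i - \bar{\textbf{y}}) = \textbf{O}\, \hat{\textbf{C}}_2(\textbf{x}_i)^{-1/2}(\textbf{x}_i - \bar{\textbf{x}})$ and $\hat{\textbf{C}}_2(\textbf{y}_i)^{-1/2}\textbf{O} = \textbf{A}^{-1}\hat{\textbf{C}}_2(\textbf{x}_i)^{-1/2}$, exactly as in the proof of Lemma~\ref{lem:tobi_ae}; the second one follows by substituting $\hat{\textbf{C}}_2(\textbf{y}_i)^{-1} = \textbf{A}^{-1}\hat{\textbf{C}}_2(\textbf{x}_i)^{-1}(\textbf{A}^{-1})\trans$ and cancelling. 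In particular, the whitened samples satisfy $\textbf{y}_w = \textbf{O}\textbf{x}_w$, so the affine action has been reduced to an orthogonal one on the whitened scale used in \eqref{eq:particular_estimator_form}.

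The step that genuinely differs from Lemma~\ref{lem:tobi_ae}, and which I expect to be the main point to verify, is the orthogonal equivariance of the argmax functional, since here $\hat{\textbf{u}}$ is not a leading eigenvector but the maximizer of a quartic criterion. The key observation is that, for centred, whitened data $\textbf{z}_i$, the $(a,b)$ entry of $\hat{\textbf{T}}_k$ is $(1/n)\sum_i z_{ik} z_{ia} z_{ib}$, whence $\textbf{v}\trans \hat{\textbf{T}}_k \textbf{v} = (1/n)\sum_i z_{ik} (\textbf{v}\trans \textbf{z}_i)^2$ and therefore
\begin{align*}
    \sum_{k=1}^p \left( \textbf{v}\trans \hat{\textbf{T}}_k \textbf{v} \right)^2 = \left\| \frac{1}{n}\sum_{i=1}^n \textbf{z}_i (\textbf{v}\trans \textbf{z}_i)^2 \right\|^2.
\end{align*}
Replacing $\textbf{z}_i$ by $\textbf{O}\textbf{z}_i$ and setting $\textbf{v} = \textbf{O}\trans \textbf{w}$, the right-hand side becomes $\| \textbf{O}\, (1/n)\sum_i \textbf{z}_i(\textbf{v}\trans \textbf{z}_i)^2 \|^2$, which equals its value for $\textbf{z}_i$ at $\textbf{v}$ because $\textbf{O}$ preserves norms. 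Since the unit-sphere constraint is also preserved under the correspondence $\textbf{w} \leftrightarrow \textbf{O}\textbf{v}$, the criterion for $\textbf{O}\textbf{z}_i$ evaluated at $\textbf{w}$ coincides with the criterion for $\textbf{z}_i$ evaluated at $\textbf{O}\trans \textbf{w}$, and hence the maximizers obey $\hat{\textbf{u}}(\textbf{O}\textbf{z}_i) = \textbf{O}\hat{\textbf{u}}(\textbf{z}_i)$.

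Combining the two stages then gives the claim. Applying the orthogonal equivariance to $\textbf{y}_w = \textbf{O}\textbf{x}_w$ and then the second covariance identity,
\begin{align*}
    \hat{\boldsymbol{\theta}}_{\mathrm{J}}(\textbf{y}_i) = \hat{\textbf{C}}_2(\textbf{y}_i)^{-1/2} \hat{\textbf{u}}(\textbf{y}_w) = \hat{\textbf{C}}_2(\textbf{y}_i)^{-1/2} \textbf{O}\, \hat{\textbf{u}}(\textbf{x}_w) = \textbf{A}^{-1} \hat{\textbf{C}}_2(\textbf{x}_i)^{-1/2} \hat{\textbf{u}}(\textbf{x}_w) = \textbf{A}^{-1} \hat{\boldsymbol{\theta}}_{\mathrm{J}}(\textbf{x}_i).
\end{align*}
The remaining subtlety, exactly as in Lemma~\ref{lem:tobi_ae}, is that the quartic objective is even in $\textbf{v}$, so its maximizer is determined only up to sign; the relation above therefore holds as an equality of the (sign-symmetric) argmax sets, or up to a sign $s \in \{-1,1\}$ once a representative is fixed. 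Since these sign considerations and the requirement that the maximizer be (almost surely) unique up to sign parallel those in the TOBI case, no new difficulty arises beyond the quartic-criterion computation of the second paragraph, which is why the result can be stated without a separate proof.
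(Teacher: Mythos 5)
Your proposal is correct and follows exactly the route the paper indicates (it states this lemma without proof, deferring to the techniques in the proof of Lemma~\ref{lem:tobi_ae}): you reduce the affine map to an orthogonal transformation of the whitened data via the identities $\hat{\textbf{C}}_2(\textbf{y}_i)^{-1/2}(\textbf{y}_i-\bar{\textbf{y}}) = \textbf{O}\hat{\textbf{C}}_2(\textbf{x}_i)^{-1/2}(\textbf{x}_i-\bar{\textbf{x}})$ and $\hat{\textbf{C}}_2(\textbf{y}_i)^{-1/2}\textbf{O} = \textbf{A}^{-1}\hat{\textbf{C}}_2(\textbf{x}_i)^{-1/2}$, and then supply the one genuinely new ingredient, the orthogonal equivariance of the quartic criterion, correctly via the identity $\sum_{k=1}^p(\textbf{v}\trans\hat{\textbf{T}}_k\textbf{v})^2 = \|n^{-1}\sum_i \textbf{z}_i(\textbf{v}\trans\textbf{z}_i)^2\|^2$. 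Your closing remark on the sign/argmax-set ambiguity is also the right reading of the lemma's ``$\in\argmax$'' formulation.
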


JADE-3 being a novel method, we next present an optimization algorithm for solving the optimization problem~\eqref{eq:3JADE_optimization}. Then, after obtaining the estimate $\hat{\textbf{u}}$ as described below, the final 3-JADE estimate is found as $\hat{\boldsymbol{\theta}}_{\mathrm{J}} = \hat{\textbf{C}}_2{}^{-1/2} \hat{\textbf{u}}$.  Denoting the sample versions of the matrices $\textbf{T}_k(\textbf{x}_w)$ as $\hat{\textbf{T}}_k$, the Lagrangian of the objective function in \eqref{eq:3JADE_optimization} and its gradient are
\begin{align*}
\ell_n(\textbf{u})=\sum_{k=1}^p(\textbf{u}\trans \hat{\textbf{T}}_k\textbf{u})^2-\lambda(\textbf{u}\trans \textbf{u}-1)\, , \quad \quad \nabla \ell_n(\textbf{u}) = 4\sum_{k=1}^p(\textbf{u}\trans \hat{\textbf{T}}_k\textbf{u})\hat{\textbf{T}}_k\textbf{u}-2\lambda\textbf{u},
\end{align*}
respectively. Letting $\textbf{u}_n$ denote a unit-length null point of the gradient, and multiplying by $\textbf{u}_n\trans $ from the left shows that $\lambda = 2\sum_{k=1}^p (\textbf{u}_n\trans \hat{\textbf{T}}_k\textbf{u}_n)^2$.
Hence, any solution $\textbf{u}_n$ needs to satisfy
$$ \sum_{k=1}^p (\textbf{u}_n\trans  \hat{\textbf{T}}_k \textbf{u}_n) \hat{\textbf{T}}_k \textbf{u}_n=\sum_{k=1}^p(\textbf{u}_n\trans  \hat{\textbf{T}}_k \textbf{u}_n)^2\textbf{u}_n,
$$
implying that 
\begin{equation}\label{eq:JADE1}
\textbf{u}_n\propto \sum_{k=1}^p(\textbf{u}_n\trans  \hat{\textbf{T}}_k \textbf{u}_n) \hat{\textbf{T}}_k \textbf{u}_n.    
\end{equation}
Motivated by the fixed point equation \eqref{eq:JADE1}, we propose next the Algorithm \ref{alg:jade} for estimating $\hat{\textbf{u}}$.

\begin{algorithm}
  \caption{3-JADE}
  \label{alg:jade}
\begin{algorithmic}[1]
\item Initialize $\textbf{u}_n$;
\While{not converged}{
    	\item $\textbf{u}_n\leftarrow \sum_{k=1}^p(\textbf{u}_n\trans  \hat{\textbf{T}}_k \textbf{u}_n) \hat{\textbf{T}}_k \textbf{u}_n$;
        \item $\textbf{u}_n\leftarrow \frac{\textbf{u}_n}{\|\textbf{u}_n\|}$;
}
\EndWhile
\end{algorithmic}
\end{algorithm}

Finally, we conclude the section with the limiting normality of 3-JADE, showing that it, like TOBI, has a limiting efficiency exactly equal to the skewness-based projection pursuit studied in \cite[Theorem 3]{radojivcic2021large}.

\begin{theorem}\label{theo:limiting_3}
We have, as $n \rightarrow \infty$,
\begin{align*}
    \sqrt{n} \left( \frac{\hat{\boldsymbol{\theta}}_{\mathrm{J}}}{\| \hat{\boldsymbol{\theta}}_{\mathrm{J}} \|} - \frac{\boldsymbol{\theta}}{\| \boldsymbol{\theta} \|} \right) \rightsquigarrow \mathcal{N}_p \left( \textbf{0}, C_0 \frac{\tau}{\| \boldsymbol{\theta} \|^2} \textbf{Q}_{\boldsymbol{\theta}} \boldsymbol{\Sigma}^{-1} \textbf{Q}_{\boldsymbol{\theta}} \right),
\end{align*}
where $C_0$ is defined in \eqref{eq:c0}.
\end{theorem}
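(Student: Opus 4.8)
The plan is to combine the affine-equivariance machinery of Section~\ref{sec:ae} with the fixed-point characterization \eqref{eq:JADE1} of the 3-JADE direction. Since $\hat{\boldsymbol{\theta}}_{\mathrm{J}}$ is affine equivariant (Lemma~\ref{lem:jade_ae}), Fisher consistent (Lemma~\ref{lemma:JADE1}), and of the standardized form \eqref{eq:particular_estimator_form} with $\hat{\textbf{u}}$ the maximizer in \eqref{eq:3JADE_optimization}, Theorem~\ref{theo:form_of_the_limiting_cov} immediately guarantees that the limiting covariance has the asserted shape $C\,\tau\|\boldsymbol{\theta}\|^{-2}\textbf{Q}_{\boldsymbol{\theta}}\boldsymbol{\Sigma}^{-1}\textbf{Q}_{\boldsymbol{\theta}}$ for some $C>0$. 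The entire content of the theorem therefore reduces to identifying $C=C_0$, and for this I would invoke Theorem~\ref{theo:shortcut}: it suffices to determine the joint limiting behaviour of $\hat{\textbf{r}}=\sqrt{n}\{\hat{\textbf{u}}(\hat{\textbf{C}}_2^{-1/2}(\textbf{z}_i)(\textbf{z}_i-\bar{\textbf{z}}))-\textbf{m}/\|\textbf{m}\|\}$ together with the fluctuation of $\hat{\textbf{C}}_2(\textbf{z}_i)$ in the simplified model $\textbf{z}_i\sim\alpha_1\mathcal{N}_p(-\alpha_2\textbf{m},\textbf{I}_p)+\alpha_2\mathcal{N}_p(\alpha_1\textbf{m},\textbf{I}_p)$, and then to read off the variance $C$ of the resulting scalar from the univariate central limit theorem.

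The core step is hence a linearization of the fixed-point equation \eqref{eq:JADE1}. I would first rewrite the objective as $\sum_k(\textbf{v}\trans\textbf{T}_k\textbf{v})^2=\|\mathrm{E}\{(\textbf{v}\trans\textbf{x}_w)^2\textbf{x}_w\}\|^2$, which exhibits the orthogonal equivariance that lets me work in an orthonormal basis with $\textbf{w}=\textbf{m}/\|\textbf{m}\|=\textbf{e}_1$. By Theorem~\ref{theo:whitening} the whitened population is Gaussian and symmetric in every direction orthogonal to $\textbf{e}_1$, so the only nonzero third moment of $\textbf{x}_w$ is $\mu_3:=\mathrm{E}(x_{w,1}^3)\neq0$; consequently the population matrices satisfy $\textbf{T}_k(\textbf{x}_w)=\textbf{0}$ for $k\geq2$ and $\textbf{T}_1(\textbf{x}_w)=\mu_3\,\textbf{e}_1\textbf{e}_1\trans$. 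Writing $\hat{\textbf{u}}=\textbf{e}_1+n^{-1/2}\textbf{r}+o_P(n^{-1/2})$ with $\textbf{r}\perp\textbf{e}_1$ and $\hat{\textbf{T}}_k=\textbf{T}_k+n^{-1/2}\textbf{E}_k+o_P(n^{-1/2})$, I would substitute into \eqref{eq:JADE1}, project onto $\textbf{e}_j$ for $j\geq2$, and retain first-order terms. The degenerate structure ($\textbf{T}_k=\textbf{0}$ for $k\geq2$) annihilates most contributions, collapsing the equation to $\mu_3(\textbf{E}_1)_{j1}=\mu_3^2 r_j$ and giving the compact linearization $\hat{\textbf{r}}=\mu_3^{-1}\textbf{Q}_{\textbf{e}_1}\{\sqrt{n}(\hat{\textbf{T}}_1-\textbf{T}_1)\}\textbf{e}_1+o_P(1)$.

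The key simplification I would emphasize is that this is \emph{exactly} the linearization obtained for TOBI in the proof of Theorem~\ref{theo:limiting_2}: there the leading eigenvector of $\sum_k\hat{\textbf{T}}_k^2$ perturbs as $r_j=\textbf{e}_j\trans(\textbf{E}_1\textbf{T}_1+\textbf{T}_1\textbf{E}_1)\textbf{e}_1/\mu_3^2=(\textbf{E}_1)_{j1}/\mu_3$, identical to the above. Since both functionals are built from the same empirically whitened data and hence the same random fluctuation matrix $\sqrt{n}(\hat{\textbf{T}}_1-\textbf{T}_1)$, this yields $\hat{\textbf{r}}_{\mathrm{J}}=\hat{\textbf{r}}_{\mathrm{L}}+o_P(1)$ as random vectors. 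Feeding $\hat{\textbf{r}}_{\mathrm{J}}$ into the shortcut of Theorem~\ref{theo:shortcut} therefore produces the identical scalar limit as for TOBI, namely $\mathcal{N}(0,C_0)$, which establishes $C=C_0$ with no further moment evaluations. As with TOBI, a sign-corrected version of the sample maximizer is used to absorb the sign ambiguity of Lemma~\ref{lemma:JADE1}, which leaves the normalized estimator unaffected.

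I expect the main obstacle to be the linearization step itself: in particular, justifying that $\sqrt{n}(\hat{\textbf{T}}_1-\textbf{T}_1)=\mathcal{O}_P(1)$ and admits a clean asymptotic expansion even though $\hat{\textbf{T}}_1$ is computed from empirically whitened data, so that its fluctuation mixes the third-moment randomness with that of $\hat{\textbf{C}}_2^{-1/2}$. A related delicate point is verifying that the flat, degenerate geometry of the population objective near $\textbf{e}_1$ (caused by $\textbf{T}_k=\textbf{0}$ for $k\geq2$) nonetheless leaves a nondegenerate first-order perturbation of the maximizer, which is what makes the collapse to $\mu_3(\textbf{E}_1)_{j1}=\mu_3^2 r_j$ legitimate rather than forcing one to higher order. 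Once the identity $\hat{\textbf{r}}_{\mathrm{J}}=\hat{\textbf{r}}_{\mathrm{L}}+o_P(1)$ is secured, the remainder follows verbatim from the TOBI computation and Theorem~\ref{theo:shortcut}.
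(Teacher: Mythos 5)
Your proposal is correct and shares the paper's overall architecture (Theorem \ref{theo:form_of_the_limiting_cov} for the shape of the covariance, Theorem \ref{theo:shortcut} for the constant, and a linearization of the standardized estimator), but the two middle steps are executed differently, and your route is arguably leaner. The paper treats $\hat{\textbf{u}}$ as an M-estimator: it Taylor-expands the gradient of the Lagrangian, computes the almost-sure limit $\textbf{H}$ of the Hessian in closed form (Lemma \ref{lemma:JADE_Hessian}), inverts it on the orthogonal complement of $\textbf{m}$, and then substitutes the linearization of $\sqrt{n}(\hat{\textbf{T}}_k - \textbf{T}_k)$; you instead linearize the fixed-point equation \eqref{eq:JADE1} directly in the basis where $\textbf{m}/\|\textbf{m}\| = \textbf{e}_1$, which is the same first-order condition but lets the degeneracy $\textbf{T}_k = \textbf{0}$ for $k \geq 2$ (Lemma \ref{lemma:JADE_Tk}) annihilate everything except the single relation $\mu_3 (\textbf{E}_1)_{j1} = \mu_3^2 r_j$. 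This reproduces exactly the paper's intermediate identity $\textbf{t}\trans\sqrt{n}(\textbf{u}_n - \textbf{u}) = (\beta\gamma\Delta^3\tau^2)^{-1}\sum_k \{\textbf{t}\trans\sqrt{n}(\hat{\textbf{T}}_k - \textbf{T}_k)\textbf{u}\}(\textbf{e}_k\trans\textbf{m}) + o_P(1)$ once one notes $\textbf{e}_k\trans\textbf{m} = \tau^{1/2}\delta_{k1}$ and $\mu_3 = \beta\gamma\Delta^3\tau^{3/2}$, so the computations agree. Your final step — observing that the linearized perturbation coincides term-by-term with the TOBI eigenvector perturbation $(\textbf{E}_1)_{j1}/\mu_3$ from Lemmas \ref{lem:eigenvector_limdist} and \ref{lem:tobi_linearization}, hence $C = C_0$ with no new moment calculation — is a genuine shortcut: the paper instead re-derives the scalar $-X_1 X_2 - (\beta\gamma\Delta^4\tau^{3/2})^{-1}X_2 + (\beta\gamma\Delta^2\tau^{3/2})^{-1}X_1^2 X_2$ and recomputes its variance, which does turn out to equal the TOBI expression after accounting for the factor $\Delta^2$. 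Two points you should make explicit to close the argument: (i) consistency of $\textbf{u}_n$ up to sign (the paper invokes a standard M-estimator argument over the compact sphere), which is a prerequisite for writing $\hat{\textbf{u}} = \textbf{e}_1 + n^{-1/2}\textbf{r} + o_P(n^{-1/2})$; and (ii) your worry about "flat, degenerate geometry" is unfounded — the population objective restricted to the sphere has a nondegenerate maximum at $\pm\textbf{e}_1$ (the paper's $\textbf{H}$ is negative definite), so the first-order perturbation is well-posed.
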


As a summary of our results concerning the limiting distributions of the affine equivariant methods (Theorems~\ref{theo:limiting_st_moment},~\ref{theo:limiting_2} and~\ref{theo:limiting_3}), the superior methods are TOBI and 3-JADE, both having exactly the same limiting covariance matrix. As routines for the computation of eigendecompositions are efficient and widely available, from a purely asymptotic viewpoint using TOBI is preferable over the other methods. However, since the finite-sample properties of the methods can still differ, we next compare their behaviors under simulated data. 

\section{Simulations}\label{sec:simu}

\revision{The simulations involve a total of six estimators, $\boldsymbol{\theta}_{LDA}, \boldsymbol{\theta}_{M}, \boldsymbol{\theta}_{R}, \boldsymbol{\theta}_{L}, \boldsymbol{\theta}_{J}, \boldsymbol{\theta}_{P}$, corresponding to linear discriminant analysis, the estimators discussed in Sections \ref{sec:preliminary}, \ref{sec:estimator_1}, \ref{sec:estimator_2}, \ref{sec:jade} and the skewness-based projection pursuit estimator studied in \cite{radojivcic2021large}, respectively. The first goal of the simulation study is to verify the constants $C$ for the affine equivariant estimators $\boldsymbol{\theta}_{R}, \boldsymbol{\theta}_{L}, \boldsymbol{\theta}_{J}, \boldsymbol{\theta}_{P}$. Of these, the final three all share the same constant $C = C_0$ defined in \eqref{eq:c0}. Due to the methods' affine equivariance we consider in the first simulation only the case with $\boldsymbol \Sigma = \textbf I_p$ and $\mathrm{E}(\textbf{x})=\textbf{0}$ in model \eqref{eq:model_mixture}.}


\begin{figure}
    \centering
        \includegraphics[width=0.75\textwidth]{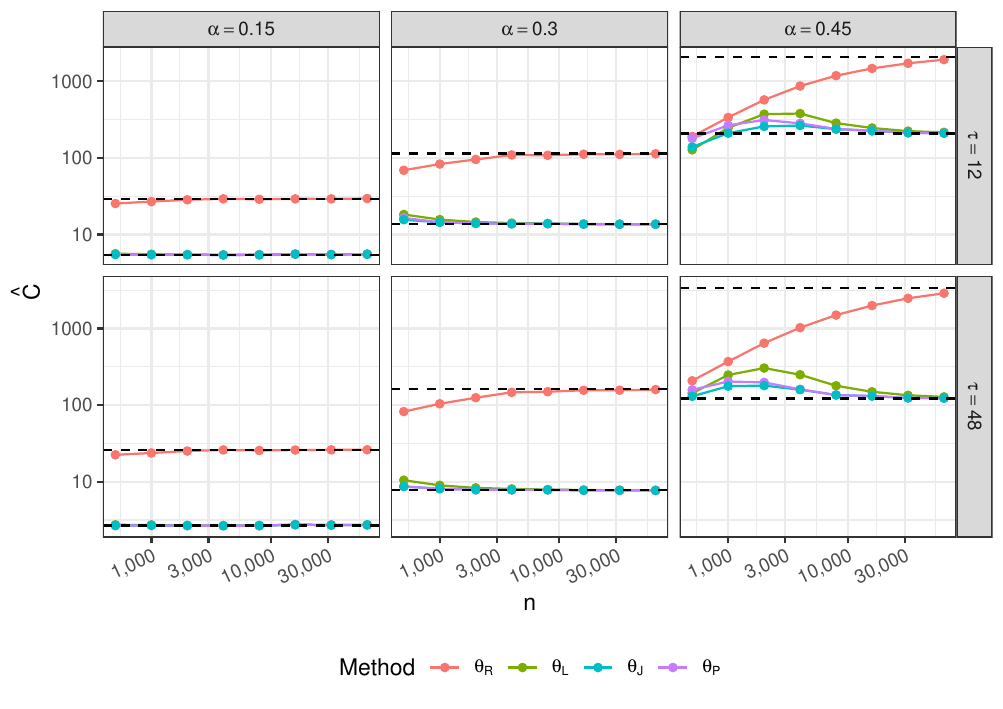} 
         \caption{Convergence of $\hat C$ to $C$ (dashed black lines) for the estimators $\boldsymbol{\theta}_R$, $\boldsymbol{\theta}_L$, $\boldsymbol{\theta}_J$ and $\boldsymbol{\theta}_P$ based on $M=10000$ data sets of size $n$. Note that both axes have a log scale.}
    \label{fig:estimateCall}
\end{figure}

Let $\textbf{X}_n^m$, $m\in\{1,\ldots,M\}$ correspond then to $M$ realized data matrices consisting of i.i.d. samples of size $n$ from this distribution for a given vector $\textbf{h}$. Then it can be shown that for any affine equivariant estimator $\hat{\boldsymbol{\theta}}(\textbf{X}_n^m)$ and for any unit-length  vector $\textbf{t} \in \mathbb{R}^p$ such that $\textbf{t}\trans  \textbf{h} = 0$, we have, as $n \rightarrow \infty$, 
\begin{align*}
 \hat C  =  n \mathrm{Var} \left[ \frac{\textbf{t}\trans  \hat{\boldsymbol{\theta}}( \textbf{X}_n^m)}{\| \hat{\boldsymbol{\theta}}( \textbf{X}_n^m) \|} \right] \rightarrow C.
\end{align*}
Using this result, we compute for a wide range of sample sizes for $\alpha=\alpha_1 \in \{0.15, 0.30, 0.45\}$ and $\textbf{h}$ such that $\tau \in \{12, 48\}$ with $p=3$ for $\boldsymbol{\theta}_R$, $\boldsymbol{\theta}_L$, $\boldsymbol{\theta}_J$ and $\boldsymbol{\theta}_P$ the corresponding $\hat C$ with $M=10000$. The results are shown in Fig~\ref{fig:estimateCall} and illustrate clearly the correctness of the derived constants. The convergence rate to the true value seems however slower when the distribution is more ``symmetric'', which is as expected, since in the perfectly symmetric case $\alpha = 0.50$, the optimal direction is no longer estimable using skewness. In cases under consideration, $\boldsymbol{\theta}_R$ is the least preferred estimator while for the asymptotically equivalent estimators, it seems that $\boldsymbol{\theta}_L$ exhibits more variation for small sample sizes than expected compared to $\boldsymbol{\theta}_J$ and $\boldsymbol{\theta}_P$ which behave quite similarly. The results also demonstrate that a larger $\tau$ does not make the task for all methods easier.

While this simulation shows that the methods have the expected variation in the limit, we are also interested in how well they actually estimate $\boldsymbol{\theta}$ for finite samples. For that purpose, we follow \cite{radojivcic2021large} and use as a performance measure the maximal similarity index (MSI) which corresponds to the inner product between the normalized true and estimated directions. MSI takes values in $[0,1]$ where 1 indicates that the two vectors point in the same direction and the estimation works perfectly.

\revision{In this second simulation, besides LDA, we included also the non-affine equivariant $\boldsymbol{\theta}_M$} and for a fair comparison therefore chose for each data set a random matrix $\boldsymbol \Sigma$ which is of the form $\boldsymbol \Sigma = \textbf{AA}\trans $ where $\textbf{A}$ is a $p \times p$ matrix where all elements are independently drawn from $\mathcal{N}(0,1)$. For all combinations of $p=\{3,10\}$, $n \in \{500,1000,2000,4000\}$,
$\alpha=\alpha_1 \in \{0.05, 0.07, \ldots, 0.49\}$ and $\textbf{h}$ such that $\tau \in \{1,\dots,20\}$ we sampled $M=1000$ data sets and Fig.~\ref{fig:MSIp3} and
~\ref{fig:MSIp10} provide the obtained average performances. 

\begin{figure}
    \centering
        \includegraphics[width=0.99\textwidth]{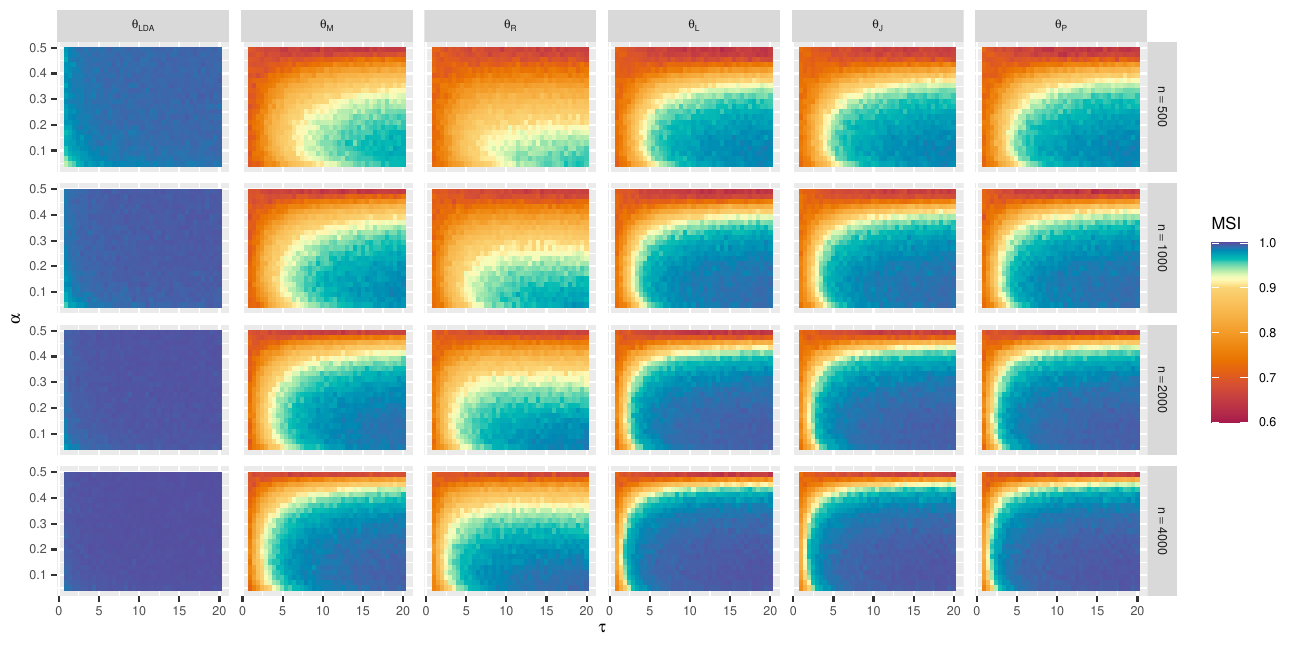} 
         \caption{Average MSI values for the different estimators for a range of $\alpha$'s and $\tau$'s and different sample sizes based on $M=1000$ when $p=3$.}
    \label{fig:MSIp3}
\end{figure}

\begin{figure}
    \centering
        \includegraphics[width=0.99\textwidth]{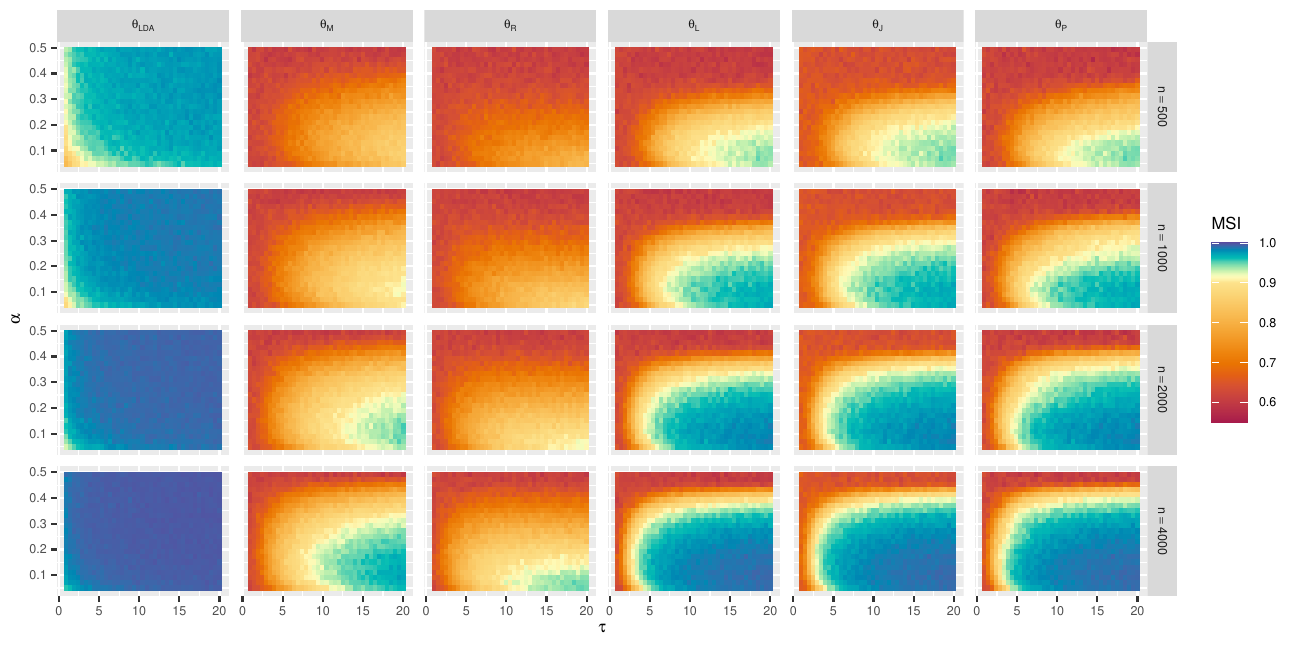} 
         \caption{Average MSI values for the different estimators for a range of $\alpha$'s and $\tau$'s and different sample sizes based on $M=1000$ when $p=10$.}
    \label{fig:MSIp10}
\end{figure}

The figures demonstrate that LDA is much better than all unsupervised methods. The moment estimator $\boldsymbol{\theta}_M$ which requires the knowledge of $\alpha$ seems also to make use of this additional knowledge and clearly outperforms $\boldsymbol{\theta}_R$. Maybe a surprise is then that this extra knowledge seems insufficient to outperform $\boldsymbol{\theta}_L$, $\boldsymbol{\theta}_J$, and $\boldsymbol{\theta}_P$, which all perform very similarly. When comparing $\boldsymbol{\theta}_L$, $\boldsymbol{\theta}_J$, and $\boldsymbol{\theta}_P$ an important feature to point out is that all methods except $\boldsymbol{\theta}_J$ and $\boldsymbol{\theta}_P$ always produced estimators. $\boldsymbol{\theta}_J$, using $\boldsymbol{\theta}_L$ as an initial value, had 0.11\% and 1.65\% convergence failures when $p=3$ and $p=10$ respectively while $\boldsymbol{\theta}_P$, as implemented in the R package ICtest \cite{ICtest}, had 4.36\% and 7.52\% convergence failures respectively. For more details of the convergence problems of $\boldsymbol{\theta}_P$ see also Fig~\revision{S1 in the Supplement}.
{The code for reproducing the results from the simulation study is available at  \url{https://github.com/uradojic/Unsupervised-linear-discrimination-using-skewness}}.

Thus, to conclude the simulations, it seems that the novel estimator $\boldsymbol{\theta}_J$ is the unsupervised estimator based on skewness, which can be recommended for the estimation of the linear discriminant in practice. It belongs to the best estimators under consideration, has hardly any convergence issues, and converges quickest to the limiting distribution.

\revision{
Note that both the simulations and theoretical aspects addressed in this study focus on scenarios where the dimension remains fixed while the sample size increases. This approach is adopted because LDA is well-known to underperform in high-dimensional settings \cite{CaiZhang2019}. Consequently, there is little rationale to expect that unsupervised methods, like the ones considered here, would be effective under these conditions. For readers interested in exploring this further, an additional simulation in \revision{the Supplement} 
confirms these limitations. Although the results are suboptimal, they still surpass however random guessing.}

\bigskip

\section{Discussion}\label{sec:discussion}

We briefly comment on possible avenues for future research. While \cite{radojivcic2021large} considered the asymptotics of kurtosis-based projection pursuit for estimating the discriminating direction, the limiting behaviors of other kurtosis-based estimators, such as \cite{pena2010eigenvectors2}, are still unknown. As such, a natural continuation of this work would be to conduct an equivalent study of fourth moments instead of third, see \cite{miettinen2015fourth} for such a study under the independent component model. Another possible extension would be to allow for elliptical mixtures instead of normal ones. As elliptical distributions have a similar joint moment structure as the multivariate normal, it is reasonable to expect that analogous results could be derived for them. A third option would be to assume that the observed mixture contains $k > 2$ components and estimate a $k - 1$ dimensional subspace that best separates them; see \cite[Theorem 4]{tyler2009invariant} for a related Fisher consistency result. \revision{\cite{loperfido2024} considered the latter two problems under the finite mixtures of weakly symmetric distributions only differing in 
their means and showed that directions maximizing skewness of the projection can indeed be used in these situations. Interestingly, eigenvectors of particular, symmetric third-order tensors are shown to be consistent for the separating direction. As the asymptotic distribution of such vectors is beyond the scope of this paper, we find it to be an interesting venue for future research.} \revision{Finally, a natural continuation is to study the high-dimensional asymptotics of the methods in a scenario where $p \equiv p_n \rightarrow \infty$ as $n \rightarrow \infty$. Such a study was conducted for the projection pursuit estimator in \cite{radojivcic2021large} and the tools involved there could prove useful also for the methods considered in the current work.}

\section*{Acknowledgments}

The work of JV was supported by the Research Council of Finland (Grants 335077, 347501, 353769). KN was supported by the HiTEc COST Action (CA21163) and by the Research Council of Finland (363261). The work of UR was supported by the Austrian Science Fund (FWF), [10.55776/I5799].

\appendix

\section{Proofs of technical results}

We present the proofs grouped by section. Throughout the proofs, the various estimators such as $\hat{\boldsymbol{\theta}}_{\mathrm{M}}$ will be presented without a subscript. It will always be clear from the context which estimator we are working with.

\subsection{Proofs of the results in Section \ref{sec:preliminary}}

In the proofs, we denote the estimator, for simplicity, by $\hat{\boldsymbol{\theta}}$ (instead of $\hat{\boldsymbol{\theta}}_{\mathrm{M}}$). \revision{Additionally, to simplify the notation and without loss of generality (since all our procedures involve centering), we also impose the condition $\mathrm{E}(\textbf{x}) = \textbf{0}$, implying that $\boldsymbol{\mu}_1 = -\alpha_2 \textbf{h}$ and $\boldsymbol{\mu}_2 = \alpha_1 \textbf{h}$ for some non-zero $\textbf{h} \in \mathbb{R}^p$. The Bayes optimal projection direction is then as in the main text, $\boldsymbol{\theta}/\| \boldsymbol{\theta} \|$, where $\boldsymbol{\theta} := \boldsymbol{\Sigma}^{-1} \textbf{h}$.}

\begin{proof}[Proof of Lemma \ref{lem:method_of_moments_population}]
Straightforward computation reveals that $\textbf{C}_2 = \boldsymbol{\Sigma} + \beta \textbf{h} \textbf{h}\trans $ and for the third moment $\textbf{c}_3$ one can use, e.g., \cite{kollo2005advanced}, to see that $\textbf{c}_3 = \beta \gamma \| \textbf{h} \|^2 \textbf{h}$. Consequently, $\beta^{-1/3} \gamma^{-1/3} \| \textbf{c}_3 \|^{-2/3} \textbf{c}_3 = \textbf{h}$, from which the claim follows.
\end{proof}

Before proving Theorem \ref{theo:limiting_1}, we first present three auxiliary lemmas: Lemma~\ref{lem:joint_limiting} obtains the joint limiting distribution of $\hat{\textbf{C}}_2$ and $\hat{\textbf{c}}_3$, in terms of which the limiting distribution of $\hat{\boldsymbol{\theta}}$ is expressed in Lemma \ref{lem:method_of_moments_sample}. Lemma \ref{lem:moment_gathering_lemma} collects different moments of order up to the sixth that are required in finding the limiting covariance matrix of the method of moments estimator. 

\begin{lemma}\label{lem:joint_limiting}
As $n \rightarrow \infty$,
\begin{align*}
    \sqrt{n}
    \begin{pmatrix}
    \mathrm{vec}(\hat{\textbf{C}}_2) - \mathrm{vec}(\textbf{C}_2) \\
    \hat{\textbf{c}}_3 - \textbf{c}_3
    \end{pmatrix} 
    = \frac{1}{\sqrt{n}}\sum_{i=1}^n\begin{pmatrix}
     (\textbf{x}_i\otimes\textbf{x}_i) - \mathrm{E}(\textbf{x}\otimes\textbf{x})\\
     (\textbf{x}_i\textbf{x}_i\trans \textbf{x}_i-\mathrm{E}(\textbf{x}\textbf{x}\trans \textbf{x})) - (2 \mathrm{E}(\textbf{x}\textbf{x}\trans )+\mathrm{E}(\textbf{x}\trans \textbf{x})\textbf{I}_p)\textbf{x}_i)
    \end{pmatrix}
    +o_P(1).
\end{align*}
converges in distribution to the normal distribution with the covariance matrix 
\begin{align*}
    \boldsymbol{\Theta} = \begin{pmatrix}
    \boldsymbol{\Theta}_{1,1} & \boldsymbol{\Theta}_{1,2} \\
    \boldsymbol{\Theta}_{2,1} & \boldsymbol{\Theta}_{2,2}
    \end{pmatrix},
\end{align*}
the blocks of which are given in the proof of the lemma.
\end{lemma}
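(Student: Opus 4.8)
The plan is to establish the stated linearization first and then invoke the multivariate central limit theorem. Since the proofs are carried out under $\mathrm{E}(\textbf{x}) = \textbf{0}$, we have $\textbf{C}_2 = \mathrm{E}(\textbf{x}\textbf{x}\trans)$ and $\textbf{c}_3 = \mathrm{E}(\textbf{x}\textbf{x}\trans\textbf{x})$, and the key device throughout is that $\bar{\textbf{x}} = \mathcal{O}_P(n^{-1/2})$ together with the law of large numbers for the various empirical moment averages, which lets us discard any product of two $\mathcal{O}_P(n^{-1/2})$ factors as $o_P(n^{-1/2})$.

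For $\hat{\textbf{C}}_2$ the argument is immediate: writing $\hat{\textbf{C}}_2 = (1/n)\sum_i \textbf{x}_i\textbf{x}_i\trans - \bar{\textbf{x}}\bar{\textbf{x}}\trans$ and noting that $\bar{\textbf{x}}\bar{\textbf{x}}\trans = \mathcal{O}_P(n^{-1})$, the subtracted term is $o_P(n^{-1/2})$, and vectorizing the remainder (using $\mathrm{vec}(\textbf{x}_i\textbf{x}_i\trans) = \textbf{x}_i\otimes\textbf{x}_i$) gives the first block. For $\hat{\textbf{c}}_3$ I would expand $(\textbf{x}_i - \bar{\textbf{x}})(\textbf{x}_i - \bar{\textbf{x}})\trans(\textbf{x}_i - \bar{\textbf{x}}) = \{\|\textbf{x}_i\|^2 - 2\textbf{x}_i\trans\bar{\textbf{x}} + \|\bar{\textbf{x}}\|^2\}(\textbf{x}_i - \bar{\textbf{x}})$ and average over $i$, collecting terms by their order in $\bar{\textbf{x}}$. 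The zeroth-order term is $(1/n)\sum_i\|\textbf{x}_i\|^2\textbf{x}_i = (1/n)\sum_i \textbf{x}_i\textbf{x}_i\trans\textbf{x}_i$; the terms linear in $\bar{\textbf{x}}$, after replacing the empirical averages $(1/n)\sum_i\|\textbf{x}_i\|^2$ and $(1/n)\sum_i\textbf{x}_i\textbf{x}_i\trans$ by their probability limits $\mathrm{E}(\textbf{x}\trans\textbf{x})$ and $\mathrm{E}(\textbf{x}\textbf{x}\trans)$, contribute $-\{2\mathrm{E}(\textbf{x}\textbf{x}\trans) + \mathrm{E}(\textbf{x}\trans\textbf{x})\textbf{I}_p\}\bar{\textbf{x}}$; and all remaining terms are of order $\|\bar{\textbf{x}}\|^2 = \mathcal{O}_P(n^{-1})$, hence negligible. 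Substituting $\bar{\textbf{x}} = (1/n)\sum_i\textbf{x}_i$ and folding everything into a single i.i.d. average yields exactly the second block of the stated expansion.

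With the linearization in hand, the summands form an i.i.d. sequence with mean zero and finite covariance, since all moments of the normal mixture are finite (in particular moments up to order six exist), so the multivariate CLT gives the asymptotic normality with covariance $\boldsymbol{\Theta}$ equal to the covariance matrix of the single summand vector. Its blocks are then identified as $\boldsymbol{\Theta}_{1,1} = \mathrm{Cov}(\textbf{x}\otimes\textbf{x})$, the cross block $\boldsymbol{\Theta}_{1,2} = \boldsymbol{\Theta}_{2,1}\trans$ being the covariance between $\textbf{x}\otimes\textbf{x}$ and the centered third-order summand, and $\boldsymbol{\Theta}_{2,2}$ the covariance of that summand with itself, all of which I would evaluate explicitly using the moment identities assembled in Lemma~\ref{lem:moment_gathering_lemma}.

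I expect the main obstacle to be purely computational rather than conceptual: evaluating $\boldsymbol{\Theta}_{2,2}$ (and, to a lesser extent, $\boldsymbol{\Theta}_{1,2}$) requires multiplying $\{\|\textbf{x}\|^2\textbf{x} - (2\textbf{C}_2 + \mathrm{tr}(\textbf{C}_2)\textbf{I}_p)\textbf{x}\}$ against its transpose, which expands into mixed moments of $\textbf{x}$ up to the sixth order; organizing these so that the final blocks are expressed compactly in terms of $\boldsymbol{\Sigma}$, $\textbf{h}$ and $\beta$ is the delicate part, and this is precisely what Lemma~\ref{lem:moment_gathering_lemma} is designed to streamline. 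A secondary point requiring care is verifying rigorously that each discarded term in the $\hat{\textbf{c}}_3$ expansion is genuinely $o_P(n^{-1/2})$, which follows by pairing the $\mathcal{O}_P(n^{-1/2})$ order of $\bar{\textbf{x}}$ with the boundedness in probability (guaranteed by the law of large numbers) of the accompanying empirical averages.
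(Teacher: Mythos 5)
Your proposal is correct and follows essentially the same route as the paper: discard the centering for $\hat{\textbf{C}}_2$ since $\bar{\textbf{x}}\bar{\textbf{x}}\trans = \mathcal{O}_P(n^{-1})$, expand $\hat{\textbf{c}}_3$ in powers of $\bar{\textbf{x}}$ keeping only the linear correction $-\{2\mathrm{E}(\textbf{x}\textbf{x}\trans)+\mathrm{E}(\textbf{x}\trans\textbf{x})\textbf{I}_p\}\bar{\textbf{x}}$, and apply the multivariate CLT to the resulting i.i.d.\ average, with the blocks of $\boldsymbol{\Theta}$ evaluated via the moment formulas of Lemma~\ref{lem:moment_gathering_lemma}. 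The paper merely packages the covariance of the summand as a sandwich of the covariance matrix $\textbf{V}$ of $(\textbf{x},\textbf{x}\otimes\textbf{x},\textbf{x}\textbf{x}\trans\textbf{x})$, which is an equivalent bookkeeping device.
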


\begin{proof}[Proof of Lemma \ref{lem:joint_limiting}]
We denote the non-centered counterparts of $\hat{\textbf{C}}_2$ and $\hat{\textbf{c}}_3$ by
\begin{align*}
    \hat{\textbf{C}}_{02} := \frac{1}{n} \sum_{i = 1}^n \textbf{x}_i \textbf{x}_i\trans  \quad \mbox{and} \quad \hat{\textbf{c}}_{03} := \frac{1}{n} \sum_{i = 1}^n \textbf{x}_i \textbf{x}_i\trans  \textbf{x}_i.
\end{align*}
As $\mathrm{E}(\textbf{x}) = \textbf{0}$, linearization coupled with the fact that $\sqrt{n} \bar{\textbf{x}} = \mathcal{O}_P(1)$, reveals that $\sqrt{n} (\hat{\textbf{C}}_{2} - \textbf{C}_2) =  \sqrt{n} (\hat{\textbf{C}}_{02} - \textbf{C}_2) + o_P(1)$, allowing us to ignore the centering for the second moment. For the third moment, we have that,
\begin{align*}
    \sqrt{n} (\hat{\textbf{c}}_{3} - \textbf{c}_3) = \sqrt{n} (\hat{\textbf{c}}_{03} - \textbf{c}_3) - 2 \left( \frac{1}{n} \sum_{i=1}^n \textbf{x}_i \textbf{x}_i\trans  \right) \sqrt{n} \bar{\textbf{x}}  - \left( \frac{1}{n} \sum_{i=1}^n \textbf{x}_i\trans  \textbf{x}_i \right) \sqrt{n} \bar{\textbf{x}} +   o_P(1) = \sqrt{n} (\hat{\textbf{c}}_{03} - \textbf{c}_3) - \textbf{B} \sqrt{n} \bar{\textbf{x}} + o_P(1),
\end{align*}
where $\textbf{B} := 2 \boldsymbol{\Sigma} + 2 \beta \textbf{h} \textbf{h}\trans  + \mathrm{tr}(\boldsymbol{\Sigma}) \textbf{I}_p + \beta \| \textbf{h} \|^2 \textbf{I}_p$. Note that all terms in the expansion that involve more than one instance of $\bar{\textbf{x}}$ are $o_P(1)$ as $\sqrt{n} \bar{\textbf{x}} = \mathcal{O}_P(1)$.

Consequently, the limiting covariance matrix of $( \mathrm{vec}(\hat{\textbf{C}}_2), \hat{\textbf{c}}_{3} )$ is
\begin{align*}
    \begin{pmatrix}
    \textbf{0} & \textbf{I}_{p^2} & \textbf{0} \\
    - \textbf{B} & \textbf{0} & \textbf{I}_p
    \end{pmatrix}
    \textbf{V}
    \begin{pmatrix}
    \textbf{0} & \textbf{I}_{p^2} & \textbf{0} \\
    - \textbf{B} & \textbf{0} & \textbf{I}_p
    \end{pmatrix}\trans ,
\end{align*}
where the $(p + p^2 + p) \times (p + p^2 + p)$ matrix $\textbf{V}$ is the covariance matrix of the random vector $(\textbf{x}, \textbf{x} \otimes \textbf{x}, \textbf{x} \textbf{x}\trans  \textbf{x})$, a block matrix with blocks 
\begin{align*}
\textbf{V}_{1,1}&=\mathrm{Cov}(\textbf x)=\mathrm{E}(\textbf{x}\textbf{x}\trans )=\textbf{C}_2,\quad \textbf{V}_{2,2}=\mathrm{Cov}(\textbf{x}\otimes\textbf{x})=\mathrm{E}\{(\textbf{x}\otimes\textbf{x})(\textbf{x}\otimes\textbf{x})\trans \}-\mathrm{E}(\textbf{x}\otimes\textbf{x})\mathrm{E}(\textbf{x}\otimes\textbf{x})\trans ,\\
\textbf{V}_{1,2}&=\mathrm{Cov}(\textbf{x},\textbf{x}\otimes\textbf{x})=\mathrm{E}\{\textbf{x} (\textbf{x}\otimes\textbf{x})\trans \},\quad \textbf{V}_{2,3}=\mathrm{Cov}\{(\textbf{x}\otimes\textbf{x}),\textbf{x}\textbf{x}\trans \textbf{x}\}=\mathrm{E}\{(\textbf{x}\otimes\textbf{x})\textbf{x}\trans \textbf{x}\textbf{x}\trans \}-\mathrm{E}(\textbf{x}\otimes\textbf{x})\mathrm{E}(\textbf{x}\textbf{x}\trans \textbf{x})\trans ,\\
\textbf{V}_{1,3}&=\mathrm{Cov}(\textbf{x},\textbf{x}\textbf{x}\trans \textbf{x})=\mathrm{E}(\textbf{x}\textbf{x}\trans \textbf{x}\textbf{x}\trans ),\quad \textbf{V}_{3,3}=\mathrm{Cov}(\textbf{x}\textbf{x}\trans \textbf{x})=\mathrm{E}(\textbf{x}\textbf{x}\trans \textbf{x}\textbf{x}\trans \textbf{x}\textbf{x}\trans )-\mathrm{E}(\textbf{x}\textbf{x}\trans \textbf{x})\mathrm{E}(\textbf{x}\textbf{x}\trans \textbf{x})\trans .
\end{align*}
The formulas for these six blocks are computed in Lemma \ref{lem:moment_gathering_lemma}, and they can be substituted to \begin{align*}
    \boldsymbol{\Theta} = \begin{pmatrix}
    \textbf{V}_{2, 2} & -\textbf{V}_{2, 1} \textbf{B}\trans  + \textbf{V}_{2, 3} \\
    -\textbf{B} \textbf{V}_{1, 2} + \textbf{V}_{3, 2} & \textbf{B} \textbf{V}_{1, 1} \textbf{B}\trans  - \textbf{V}_{3, 1} \textbf{B}\trans  - \textbf{B} \textbf{V}_{1, 3} + \textbf{V}_{3, 3}
    \end{pmatrix},
\end{align*}
to obtain the final limiting covariance matrix.
\end{proof}

\begin{lemma}\label{lem:method_of_moments_sample}
We have,
\begin{align*}
    \sqrt{n}(\hat{\boldsymbol{\theta}} - \boldsymbol{\theta}) =& -(\boldsymbol{\theta}\trans  \otimes \boldsymbol{\Sigma}^{-1}) \sqrt{n} \{ \mathrm{vec}(\hat{\textbf{C}}_2) - \mathrm{vec}(\textbf{C}_2) \} 
    + \{ (1 + \beta \tau ) \boldsymbol{\Sigma}^{-1} + \beta \boldsymbol{\theta} \boldsymbol{\theta}\trans  \} \textbf{A} \sqrt{n} (\hat{\textbf{c}}_3 - \textbf{c}_3) + o_P(1),
\end{align*}
where 
\begin{align*}
    \textbf{A} := \beta^{-1/3} \gamma^{-1/3} \| \textbf{c}_3 \|^{-2/3} \textbf{I}_p - \frac{2}{3} \beta^{-1/3} \gamma^{-1/3} \| \textbf{c}_3 \|^{-2/3} \frac{\textbf{h}\textbf{h}\trans }{\| \textbf{h} \|^2}.
\end{align*}
\end{lemma}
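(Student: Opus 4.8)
The proof is a delta-method (first-order Taylor) argument. The plan is to regard the estimator as $\hat{\boldsymbol{\theta}} = f(\hat{\textbf{C}}_2, \hat{\textbf{c}}_3)$ for the smooth map $f(\textbf{S}, \textbf{t}) := \textbf{M}^{-1} \textbf{v}$, where $\textbf{M} := \textbf{S} - \beta^{1/3} \gamma^{-2/3} \| \textbf{t} \|^{-4/3} \textbf{t} \textbf{t}\trans$ and $\textbf{v} := \beta^{-1/3} \gamma^{-1/3} \| \textbf{t} \|^{-2/3} \textbf{t}$, and to linearize $f$ around the population point $(\textbf{C}_2, \textbf{c}_3)$. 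By the proof of Lemma~\ref{lem:method_of_moments_population} this point satisfies $\textbf{v} = \textbf{h}$ and, crucially, $\textbf{M} = \boldsymbol{\Sigma}$, since the rank-one correction $\beta^{1/3} \gamma^{-2/3} \| \textbf{c}_3 \|^{-4/3} \textbf{c}_3 \textbf{c}_3\trans$ reduces to $\beta \textbf{h} \textbf{h}\trans$ and exactly cancels the rank-one part of $\textbf{C}_2 = \boldsymbol{\Sigma} + \beta \textbf{h} \textbf{h}\trans$. In particular $\textbf{M} = \boldsymbol{\Sigma}$ is invertible, so $f$ is differentiable at $(\textbf{C}_2, \textbf{c}_3)$; combined with the $\sqrt{n}$-consistency of $\hat{\textbf{C}}_2, \hat{\textbf{c}}_3$ from Lemma~\ref{lem:joint_limiting}, the standard linearization then yields the stated expansion up to an $o_P(1)$ remainder.

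Differentiating $\boldsymbol{\theta} = \textbf{M}^{-1} \textbf{v}$ gives the differential $\delta\boldsymbol{\theta} = -\textbf{M}^{-1} (\delta\textbf{M}) \boldsymbol{\theta} + \textbf{M}^{-1} \delta\textbf{v}$, and I would split $\delta\textbf{M} = \delta\textbf{C}_2 - \delta g$, where $g(\textbf{t}) := \beta^{1/3} \gamma^{-2/3} \| \textbf{t} \|^{-4/3} \textbf{t} \textbf{t}\trans$ denotes the rank-one correction. Evaluated at the population point, the $\delta\textbf{C}_2$ contribution is $-\boldsymbol{\Sigma}^{-1} (\delta\textbf{C}_2) \boldsymbol{\theta}$, which, via the identity $\mathrm{vec}(\textbf{A}\textbf{B}\textbf{C}) = (\textbf{C}\trans \otimes \textbf{A}) \mathrm{vec}(\textbf{B})$ applied with $\textbf{A} = \boldsymbol{\Sigma}^{-1}$ and $\textbf{C} = \boldsymbol{\theta}$, becomes exactly the first term $-(\boldsymbol{\theta}\trans \otimes \boldsymbol{\Sigma}^{-1}) \sqrt{n} \{ \mathrm{vec}(\hat{\textbf{C}}_2) - \mathrm{vec}(\textbf{C}_2) \}$. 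The two remaining contributions both carry a factor $\delta\textbf{c}_3$: the term $\textbf{M}^{-1} \delta\textbf{v} = \boldsymbol{\Sigma}^{-1} \delta\textbf{v}$ and the term $+\boldsymbol{\Sigma}^{-1} (\delta g) \boldsymbol{\theta}$.

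For the $\delta\textbf{v}$ term I would differentiate $\textbf{v} = \beta^{-1/3} \gamma^{-1/3} \| \textbf{c}_3 \|^{-2/3} \textbf{c}_3$, obtaining a contribution from the scalar factor and one from $\textbf{c}_3$ itself, and then simplify at $\textbf{c}_3 = \beta \gamma \| \textbf{h} \|^2 \textbf{h}$ using $\textbf{c}_3 \textbf{c}_3\trans / \| \textbf{c}_3 \|^2 = \textbf{h} \textbf{h}\trans / \| \textbf{h} \|^2$; this collapses precisely to $\delta\textbf{v} = \textbf{A}\, \delta\textbf{c}_3$ and thereby identifies the matrix $\textbf{A}$ of the statement. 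The main work, and the principal obstacle, is the $\delta g$ term: differentiating $g$ produces three pieces (from the scalar factor $\| \textbf{c}_3 \|^{-4/3}$ and from each copy of $\textbf{c}_3$), which I would apply to $\boldsymbol{\theta}$ and reduce using the population relations $\textbf{c}_3 \propto \textbf{h}$, $\textbf{c}_3\trans \boldsymbol{\theta} = \beta \gamma \| \textbf{h} \|^2 \tau$, and $\boldsymbol{\Sigma}^{-1} \textbf{h} = \boldsymbol{\theta}$. The key algebraic step is to recognize that the resulting vector factors cleanly as $(\delta g) \boldsymbol{\theta} = (\beta \tau \textbf{I}_p + \beta \textbf{h} \boldsymbol{\theta}\trans) \textbf{A}\, \delta\textbf{c}_3$; granting this, left-multiplying by $\boldsymbol{\Sigma}^{-1}$ (so that $\boldsymbol{\Sigma}^{-1} \textbf{h} \boldsymbol{\theta}\trans = \boldsymbol{\theta} \boldsymbol{\theta}\trans$) and adding the $\delta\textbf{v}$ contribution $\boldsymbol{\Sigma}^{-1} \textbf{A}\, \delta\textbf{c}_3$ merges the two $\delta\textbf{c}_3$ terms into the stated coefficient $\{ (1 + \beta \tau) \boldsymbol{\Sigma}^{-1} + \beta \boldsymbol{\theta} \boldsymbol{\theta}\trans \} \textbf{A}$, which completes the linearization.
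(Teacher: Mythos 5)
Your proposal is correct and follows essentially the same route as the paper: the paper also performs a first-order linearization, organized by first writing $\hat{\textbf{h}} := \beta^{-1/3}\gamma^{-1/3}\|\hat{\textbf{c}}_3\|^{-2/3}\hat{\textbf{c}}_3$ (your $\textbf{v}$, with linearization $\textbf{A}\,\delta\textbf{c}_3$), then $\hat{\boldsymbol{\Sigma}} := \hat{\textbf{C}}_2 - \beta\hat{\textbf{h}}\hat{\textbf{h}}\trans$ (your $\textbf{M}$, exploiting exactly the rank-one structure $g(\textbf{t}) = \beta\textbf{v}\textbf{v}\trans$ that underlies your factorization of $(\delta g)\boldsymbol{\theta}$), and finally $\hat{\boldsymbol{\theta}} = \hat{\boldsymbol{\Sigma}}^{-1}\hat{\textbf{h}}$ via the standard inverse-matrix linearization. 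The key algebraic identity you state but do not fully verify, $(\delta g)\boldsymbol{\theta} = (\beta\tau\textbf{I}_p + \beta\textbf{h}\boldsymbol{\theta}\trans)\textbf{A}\,\delta\textbf{c}_3$, does hold, so the argument is complete in substance.
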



\begin{proof}[Proof of Lemma \ref{lem:method_of_moments_sample}]

We denote by $\hat{\textbf{h}} := \beta^{-1/3} \gamma^{-1/3} \| \hat{\textbf{c}}_3 \|^{-2/3} \hat{\textbf{c}}_3$ the sample third-moment estimator of $\textbf{h}$. Standard asymptotic linearization shows that $\sqrt{n}(\| \hat{\textbf{c}}_3 \|^2 - \| \textbf{c}_3 \|^2) = 2 \textbf{c}_3\trans  \sqrt{n} (\hat{\textbf{c}}_3 - \textbf{c}_3) + o_P(1)$, and, consequently, by the delta method, we get
\begin{align*}
    \sqrt{n}(\| \hat{\textbf{c}}_3 \|^{-2/3} - \| \textbf{c}_3 \|^{-2/3}) = -\frac{2}{3} \| \textbf{c}_3 \|^{-8/3} \textbf{c}_3\trans  \sqrt{n} (\hat{\textbf{c}}_3 - \textbf{c}_3) + o_P(1).
\end{align*}
Using this, the asymptotic linearization of $\hat{\textbf{h}}$ is
\begin{align*}
    & \sqrt{n} (\hat{\textbf{h}} - \textbf{h}) 
    = \beta^{-1/3} \gamma^{-1/3} \{ \sqrt{n}(\| \hat{\textbf{c}}_3 \|^{-2/3} - \| \textbf{c}_3 \|^{-2/3}) \textbf{c}_3 + \| \textbf{c}_3 \|^{-2/3} \sqrt{n} (\hat{\textbf{c}}_3 - \textbf{c}_3) \} + o_P(1)
    = \textbf{A} \sqrt{n} (\hat{\textbf{c}}_3 - \textbf{c}_3) + o_P(1),
\end{align*}
where $\textbf{A}$ is as in the statement of the lemma.

Denote next $\hat{\boldsymbol{\Sigma}} := \hat{\textbf{C}}_2 - \beta \hat{\textbf{h}} \hat{\textbf{h}}\trans $. Then, we have $\sqrt{n} (\hat{\boldsymbol{\Sigma}} - \boldsymbol{\Sigma})
    = \sqrt{n} (\hat{\textbf{C}}_2 - \textbf{C}_2) - \beta \textbf{A} \sqrt{n} (\hat{\textbf{c}}_3 - \textbf{c}_3) \textbf{h}\trans  - \beta \textbf{h} \sqrt{n} (\hat{\textbf{c}}_3 - \textbf{c}_3)\trans  \textbf{A}\trans $. To obtain a linearization for the inverse $\hat{\boldsymbol{\Sigma}}^{-1}$, we observe that $\textbf{0} = \sqrt{n}(\hat{\boldsymbol{\Sigma}} \hat{\boldsymbol{\Sigma}}^{-1} - \textbf{I}_p) = \sqrt{n}( \hat{\boldsymbol{\Sigma}} - \boldsymbol{\Sigma}) \hat{\boldsymbol{\Sigma}}^{-1} + \boldsymbol{\Sigma} \sqrt{n} (\hat{\boldsymbol{\Sigma}}^{-1}  - \boldsymbol{\Sigma}^{-1})$, giving $\sqrt{n} (\hat{\boldsymbol{\Sigma}}^{-1}  - \boldsymbol{\Sigma}^{-1}) = - \boldsymbol{\Sigma}^{-1} \sqrt{n}( \hat{\boldsymbol{\Sigma}} - \boldsymbol{\Sigma}) \boldsymbol{\Sigma}^{-1} + o_P(1)$.
Hence, $\hat{\boldsymbol{\theta}}$ has the linearization,
\begin{align*}
    \sqrt{n}(\hat{\boldsymbol{\theta}} - \boldsymbol{\theta}) = - \boldsymbol{\Sigma}^{-1} \sqrt{n}( \hat{\boldsymbol{\Sigma}} - \boldsymbol{\Sigma}) \boldsymbol{\Sigma}^{-1} \textbf{h} + \boldsymbol{\Sigma}^{-1} \sqrt{n} (\hat{\textbf{h}} - \textbf{h}) + o_P(1),
\end{align*}
and plugging in our earlier expressions for $\sqrt{n} (\hat{\textbf{h}} - \textbf{h})$ and $ \sqrt{n} (\hat{\boldsymbol{\Sigma}} - \boldsymbol{\Sigma}) $ now yields the claim.

\end{proof}

\begin{lemma}\label{lem:moment_gathering_lemma}
Let $\textbf{x}$ follow the mixture distribution \eqref{eq:model_mixture} with $\boldsymbol{\mu}_1 = -\alpha_2 \textbf{h}$ and $\boldsymbol{\mu}_2 = \alpha_1 \textbf{h}$ for some $\textbf{h} \in \mathbb{R}^p$. Then we have the following moments.
\begin{align*}
    \mathrm{Cov}(\textbf x) =~& \boldsymbol{\Sigma} + \beta \textbf{h} \textbf{h}\trans , \quad \mathrm{Cov}(\textbf{x}, \textbf{x}\otimes\textbf{x}) = \beta \gamma \textbf{h} (\textbf{h}\otimes\textbf{h})\trans ,\\
    \mathrm{Cov}(\textbf{x}, \textbf{x}\textbf{x}\trans \textbf{x}) =~&\mathrm{tr}(\boldsymbol{\Sigma})\boldsymbol{\Sigma} + 2\boldsymbol{\Sigma}^2 + 2\beta\textbf{h}\textbf{h}\trans \boldsymbol{\Sigma} +
2\beta\boldsymbol{\Sigma}\textbf{h}\textbf{h}\trans  + \beta \| \textbf{h} \|^2 \boldsymbol{\Sigma}+\beta\mathrm{tr}(\boldsymbol{\Sigma})\textbf{h}\textbf{h}\trans +  \beta(1-3\beta) \| \textbf{h} \|^2 \textbf{h} \textbf{h}\trans ,\\
    \mathrm{Cov}(\textbf{x}\otimes\textbf{x}) =~& (\textbf{I}_{p^2} + \textbf{K}_{p,p})(\boldsymbol{\Sigma}\otimes\boldsymbol{\Sigma} + \beta \textbf{h}\textbf{h}\trans \otimes\boldsymbol{\Sigma}+ \beta \boldsymbol{\Sigma}\otimes\textbf{h}\textbf{h}\trans ) +\beta(1-4\beta)\textbf{h}\textbf{h}\trans \otimes\textbf{h}\textbf{h}\trans \\
    \mathrm{Cov}\{(\textbf{x}\otimes\textbf{x}),\textbf{x}\textbf{x}\trans \textbf{x} \} =~& \beta \gamma   \{ \mathrm{tr}(\boldsymbol{\Sigma}) + (1 - 3 \beta )  \| \textbf{h} \|^2 \}  (\textbf{h} \otimes \textbf{h}) \textbf{h}\trans + 2 \beta \gamma \{ (\textbf{I}_p \otimes \boldsymbol{\Sigma}) + (\boldsymbol{\Sigma} \otimes \textbf{I}_p) \} (\textbf{h} \otimes \textbf{h}) \textbf{h}\trans  \\
    &~+ 2 \beta \gamma (\textbf{h}\otimes\textbf{h})\textbf{h}\trans \boldsymbol{\Sigma} + \beta \gamma \| \textbf{h} \|^2 (\textbf{h}\otimes\boldsymbol{\Sigma}) + \beta \gamma \| \textbf{h} \|^2 (\boldsymbol{\Sigma} \otimes \textbf{h}),\\
    \mathrm{Cov}(\textbf{x}\textbf{x}\trans \textbf{x}) =~& 4\beta\mathrm{tr}(\boldsymbol{\Sigma})\boldsymbol{\Sigma}\textbf{h}\textbf{h}\trans +8\beta\boldsymbol{\Sigma}^2\textbf{h}\textbf{h}\trans +4\beta\gamma \|\textbf{h}\|^2 \boldsymbol{\Sigma}\textbf{h}\textbf{h}\trans  + [ 2 \mathrm{tr}(\boldsymbol{\Sigma}^2) + \{ \mathrm{tr} (\boldsymbol{\Sigma}) \}^2] (\boldsymbol{\Sigma} + \beta \textbf{h} \textbf{h}\trans )   \\
&~+4 \left\{ \mathrm{tr}(\boldsymbol{\Sigma})\boldsymbol{\Sigma}+2\boldsymbol{\Sigma}^2+2\beta\textbf{h}\textbf{h}\trans \boldsymbol{\Sigma}+\beta\mathrm{tr}(\boldsymbol{\Sigma})\textbf{h}\textbf{h}\trans +2\beta\boldsymbol{\Sigma}\textbf{h}\textbf{h}\trans +\beta\gamma\|\textbf{h}\|^2\textbf{h}\textbf{h}\trans +\beta\|\textbf{h}\|^2\boldsymbol{\Sigma} \right\} \boldsymbol{\Sigma}\\
&~+ \beta \{ 2 \mathrm{tr}(\boldsymbol{\Sigma}) \| \textbf{h} \|^2 + 4 \textbf{h}\trans  \boldsymbol{\Sigma} \textbf{h} \} \{ \boldsymbol{\Sigma}  + (1 - 3 \beta) \textbf{h} \textbf{h}\trans  \} + \beta (1 - 3 \beta) \| \textbf{h} \|^4 \{ \boldsymbol{\Sigma} + (1 - 3\beta ) \textbf{h} \textbf{h}\trans ),
\end{align*}
where $\textbf{K}_{p,p}$ is the $(p, p)$-commutation matrix.
\end{lemma}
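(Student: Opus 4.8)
The plan is to exploit the simple additive structure of the mixture. Conditioning on the group label, the model \eqref{eq:model_mixture} admits the stochastic representation $\textbf{x} = b \textbf{h} + \boldsymbol{\epsilon}$, where $b$ is a scalar random variable taking the value $-\alpha_2$ with probability $\alpha_1$ and the value $\alpha_1$ with probability $\alpha_2$, and $\boldsymbol{\epsilon} \sim \mathcal{N}_p(\textbf{0}, \boldsymbol{\Sigma})$ is independent of $b$ (the residual has the same law in both components precisely because the two groups share the covariance $\boldsymbol{\Sigma}$). A direct computation yields the first six moments of $b$, namely $\mathrm{E}(b) = 0$, $\mathrm{E}(b^2) = \beta$, $\mathrm{E}(b^3) = \beta \gamma$, $\mathrm{E}(b^4) = \beta(1 - 3\beta)$, $\mathrm{E}(b^5) = \beta \gamma (1 - 2\beta)$ and $\mathrm{E}(b^6) = \beta(1 - 5\beta + 5\beta^2)$, each being an elementary polynomial in $\alpha_1, \alpha_2$ rewritten through $\beta = \alpha_1 \alpha_2$ and $\gamma = \alpha_1 - \alpha_2$.

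First I would expand every target moment multilinearly in the two summands $b\textbf{h}$ and $\boldsymbol{\epsilon}$. Since $b$ and $\boldsymbol{\epsilon}$ are independent, each resulting term factors into a power moment of $b$ times a Gaussian moment of $\boldsymbol{\epsilon}$, and because all odd moments of the centered Gaussian $\boldsymbol{\epsilon}$ vanish, only terms containing an even number of $\boldsymbol{\epsilon}$-factors survive. This already explains the coefficient pattern in the statement: a moment of total degree $d$ only picks up the factors $\mathrm{E}(b^d), \mathrm{E}(b^{d-2}), \ldots$, so the degree-four covariances carry $\mathrm{E}(b^4) = \beta(1 - 3\beta)$ and $\mathrm{E}(b^2) = \beta$, whereas the degree-five cross covariance $\mathrm{Cov}\{(\textbf{x} \otimes \textbf{x}), \textbf{x}\textbf{x}\trans\textbf{x}\}$ is proportional to $\beta\gamma$ throughout, since only $\mathrm{E}(b^5)$ and $\mathrm{E}(b^3)$ contribute.

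Next I would evaluate the surviving Gaussian moments through the standard Isserlis/Wick identities. The required inputs are $\mathrm{E}(\boldsymbol{\epsilon}\boldsymbol{\epsilon}\trans) = \boldsymbol{\Sigma}$; the fourth-moment identity $\mathrm{E}(\boldsymbol{\epsilon}\boldsymbol{\epsilon}\trans \textbf{A} \boldsymbol{\epsilon}\boldsymbol{\epsilon}\trans) = \mathrm{tr}(\textbf{A}\boldsymbol{\Sigma})\boldsymbol{\Sigma} + \boldsymbol{\Sigma}(\textbf{A} + \textbf{A}\trans)\boldsymbol{\Sigma}$, which at $\textbf{A} = \textbf{I}_p$ produces the recurring $\mathrm{tr}(\boldsymbol{\Sigma})\boldsymbol{\Sigma} + 2\boldsymbol{\Sigma}^2$; its Kronecker counterpart $\mathrm{Cov}(\boldsymbol{\epsilon} \otimes \boldsymbol{\epsilon}) = (\textbf{I}_{p^2} + \textbf{K}_{p,p})(\boldsymbol{\Sigma} \otimes \boldsymbol{\Sigma})$, which is the source of the commutation matrix appearing in the statement; and the full sixth moment $\mathrm{E}(\boldsymbol{\epsilon}\boldsymbol{\epsilon}\trans\boldsymbol{\epsilon}\boldsymbol{\epsilon}\trans\boldsymbol{\epsilon}\boldsymbol{\epsilon}\trans)$, whose fifteen Wick pairings collapse into the $[2\mathrm{tr}(\boldsymbol{\Sigma}^2) + \{\mathrm{tr}(\boldsymbol{\Sigma})\}^2]\boldsymbol{\Sigma}$-type expression. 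Finally, to pass from raw moments to the covariances I would subtract the relevant outer products of means, recalling that $\mathrm{E}(\textbf{x} \otimes \textbf{x}) = \mathrm{vec}(\boldsymbol{\Sigma} + \beta \textbf{h}\textbf{h}\trans)$ and $\mathrm{E}(\textbf{x}\textbf{x}\trans\textbf{x}) = \beta\gamma \|\textbf{h}\|^2 \textbf{h}$ (cf.\ the proof of Lemma \ref{lem:method_of_moments_population}). This subtraction is also what converts the raw factor $\mathrm{E}(b^4) = \beta(1 - 3\beta)$ into $\beta(1 - 4\beta)$ in $\mathrm{Cov}(\textbf{x} \otimes \textbf{x})$, via $\mathrm{E}(b^4) - \{\mathrm{E}(b^2)\}^2 = \beta(1 - 4\beta)$.

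The routine but genuinely laborious part, and the main obstacle, is the degree-six moment $\mathrm{Cov}(\textbf{x}\textbf{x}\trans\textbf{x})$, together with its degree-five companion $\mathrm{Cov}\{(\textbf{x} \otimes \textbf{x}), \textbf{x}\textbf{x}\trans\textbf{x}\}$. For these one must enumerate all ways of distributing the $\textbf{h}$- and $\boldsymbol{\epsilon}$-factors across the six (respectively five) positions, evaluate the embedded Gaussian fourth and sixth moments with a deterministic matrix inserted between the $\boldsymbol{\epsilon}$'s, and then combine the many symmetric contributions while keeping the Kronecker and transpose structure consistent. No single step is deep, but the bookkeeping is extensive; organizing the whole expansion by the number of $\boldsymbol{\epsilon}$-factors (equivalently, by the power of $b$) keeps it tractable and lets the terms of the claimed formulas be read off group by group.
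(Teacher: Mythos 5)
Your proposal is correct, and it takes a genuinely different computational route from the paper. The paper conditions on the group label, writing $\mathrm{E}\{f(\textbf{x})\} = \alpha_1\mathrm{E}\{f(\textbf{y}_1)\} + \alpha_2\mathrm{E}\{f(\textbf{y}_2)\}$ with $\textbf{y}_j \sim \mathcal{N}_p(\boldsymbol{\mu}_j,\boldsymbol{\Sigma})$, and then evaluates the non-central Gaussian moments component by component via repeated applications of the multivariate Stein identity (together with closed-form fourth- and sixth-moment formulas cited from Gupta--Nagar and von Rosen), finally recombining with identities such as $\alpha_1^3+\alpha_2^3 = 1-3\beta$ and $\alpha_1^5+\alpha_2^5=1-5\beta+5\beta^2$. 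You instead use the additive representation $\textbf{x} = b\textbf{h}+\boldsymbol{\epsilon}$ with $b\perp\boldsymbol{\epsilon}$, expand each moment multilinearly, and kill all terms with an odd number of $\boldsymbol{\epsilon}$-factors, so that every surviving term factors into a scalar moment of $b$ (your listed values $\mathrm{E}(b^2)=\beta$, $\mathrm{E}(b^3)=\beta\gamma$, $\mathrm{E}(b^4)=\beta(1-3\beta)$, $\mathrm{E}(b^5)=\beta\gamma(1-2\beta)$, $\mathrm{E}(b^6)=\beta(1-5\beta+5\beta^2)$ are all correct) times a \emph{central} Gaussian moment handled by Isserlis/Wick. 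This buys you two things: you never need Stein's identity or non-central moment formulas, and the grading by the power of $b$ explains a priori why, e.g., the fifth-order cross covariance is proportional to $\beta\gamma$ and why $\beta(1-4\beta)$ appears after centering, which the paper's route only reveals after the fact. The paper's approach, in exchange, can lean directly on published non-central moment formulas rather than re-deriving the Wick combinatorics for the sixth moment. Your write-up is a plan rather than a full execution of the fifth- and sixth-order bookkeeping, but every ingredient needed to complete it is correctly identified and the spot checks you perform (the $2\beta\textbf{h}\textbf{h}\trans\boldsymbol{\Sigma}+2\beta\boldsymbol{\Sigma}\textbf{h}\textbf{h}\trans$ pattern, the $(1-4\beta)$ coefficient) do come out right, so I see no gap in the argument itself.
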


\begin{proof}[Proof of Lemma \ref{lem:moment_gathering_lemma}]
Let $\textbf{y}_1\sim \mathcal{N}_p(-\alpha_2\textbf{h},\boldsymbol{\Sigma})$ and  $\textbf{y}_2\sim \mathcal{N}_p(\alpha_1\textbf{h},\boldsymbol{\Sigma})$ be independent random vectors such that $\textbf{x}=B\textbf{y}_1+(1-B)\textbf{y}_2$, where $B\sim \mathrm{Ber}(\alpha_1)$ is independent of $\textbf{y}_1$ and $\textbf{y}_2$. Then, using the law of total expectation, we get $\mathrm{E}\{ f(\textbf{x}) \} = \alpha_1 \mathrm{E} \{ f(\textbf{y}_1) \} +  \alpha_2 \mathrm{E} \{ f(\textbf{y}_2) \}$. We now treat each of the six claims one-by-one:
\begin{itemize}
    \item[1.] $\mathrm{Cov}(\textbf x) = \boldsymbol{\Sigma} + \beta \textbf{h} \textbf{h}\trans $ follows straightforwardly from the basic moment formulas for multivariate normal distribution.
    \item[2.] The $(i, j)$-element of $\mathrm{E}\{ (\textbf{x}\otimes\textbf{x}) \textbf{x}\trans  \}$ is $\alpha_1 \mathrm{E} \{ (\textbf{y}_1\otimes\textbf{y}_1)_i(\textbf{y}_1)_j\} + \alpha_2 \mathrm{E}\{ (\textbf{y}_2\otimes\textbf{y}_2)_i(\textbf{y}_2)_j \}$, where we take $i=i(k,l)=(k-1)p + l$, for $k,l=1,\dots,p$. In that case, $(\textbf x\otimes\textbf{x})_i=x_kx_l$. Focus now on the first expectation. \\
\begin{align*}
E_{1,i,j} &:= \mathrm{E} \{ (\textbf{y}_1\otimes\textbf{y}_1)_i(\textbf{y}_1)_j \} = \mathrm{E}(y_{1,k}y_{1,l}y_{1,j})=\mathrm{E}(y_{1,k}y_{1,l})\mu_{1,j}+\sum_{s=1}^p\Sigma_{js}\mathrm{E} \left\{ \frac{\partial}{\partial y_{1, s}}  g(y_{1,k},y_{1,l}) \right\},
\end{align*}
where the last equality holds due to multivariate Stein's identity \cite[Lemma 1]{liu1994siegel} with $g(y_{1,k},y_{1,l})=y_{1,k}y_{1,l}$. Applying simple algebra we obtain that
$E_{1,i,j}=\Sigma_{kl}\mu_{1,j}+\mu_{1,k}\mu_{1,l}\mu_{1,j}+\Sigma_{jk}\mu_{1,l}+ \Sigma_{jl}\mu_{1,k}$. Finally, since $\alpha_1{\boldsymbol\mu}_1+\alpha_2\boldsymbol{\mu}_2=0$, we obtain that $\mathrm{E}\{ (\textbf{x}\otimes\textbf{x}) \textbf{x}\trans  \}=\beta(\alpha_1-\alpha_2)(\textbf{h}\otimes\textbf{h})\textbf{h}\trans $.

\item[3.] Using again the law of total expectation and by Theorem 2.3.8(vi) in \cite{gupta2018matrix}, $\mathrm{Cov}(\textbf{x}\textbf{x}\trans \textbf{x}, \textbf{x}) = \mathrm{E}(\textbf{x} \textbf{x}\trans  \textbf{x} \textbf{x}\trans )$ equals
$$
\mathrm{tr}(\boldsymbol{\Sigma})\boldsymbol{\Sigma} + 2\boldsymbol{\Sigma}^2 + 2\beta\textbf{h}\textbf{h}\trans \boldsymbol{\Sigma} +
2\beta\boldsymbol{\Sigma}\textbf{h}\textbf{h}\trans  + \beta \| \textbf{h} \|^2 \boldsymbol{\Sigma}+\beta\mathrm{tr}(\boldsymbol{\Sigma})\textbf{h}\textbf{h}\trans  +  \beta(1-3\beta) \| \textbf{h} \|^2 \textbf{h} \textbf{h}\trans ,
$$
where we have used the fact that $\alpha_1^3 + \alpha_2^3 = 1 - 3\beta$.

\item[4.] Observe first that $\mathrm{E}\{ (\textbf{x}\otimes\textbf{x})(\textbf{x}\otimes\textbf{x})\trans \}=\mathrm{E}\{( \textbf{x}\textbf{x}\trans )\otimes(\textbf{x}\textbf{x}\trans )\}$. Then using again the law of total expectation and Theorem 3.1(v) in von Rosen \cite{von1988moments}, we obtain that
\begin{align*}
\mathrm{E}\{ (\textbf{x}\otimes\textbf{x})(\textbf{x}\otimes\textbf{x})\trans \}=~&\boldsymbol{\Sigma}\otimes\boldsymbol{\Sigma}+\mathrm{vec}(\boldsymbol{\Sigma})\mathrm{vec}(\boldsymbol{\Sigma})\trans +\textbf{K}_{p,p}(\boldsymbol{\Sigma}\otimes\boldsymbol{\Sigma})+\beta\boldsymbol{\Sigma}\otimes\textbf{h}\textbf{h}\trans +\beta(\textbf{h}\otimes\textbf{h})\mathrm{vec}(\boldsymbol{\Sigma})\trans \\
&+\beta\textbf{K}_{p,p}(\textbf{h}\textbf{h}\trans \otimes\boldsymbol{\Sigma}+\boldsymbol{\Sigma}\otimes\textbf{h}\textbf{h}\trans )+\beta\mathrm{vec}(\boldsymbol{\Sigma})(\textbf{h}\otimes\textbf{h})\trans +\beta\textbf{h}\textbf{h}\trans \otimes\boldsymbol{\Sigma}+\beta(1-3\beta)\textbf{h}\textbf{h}\trans \otimes\textbf{h}\textbf{h}\trans , 
\end{align*}
where $\textbf{K}_{p,p}$ is the $(p, p)$-commutation matrix.
Furthermore, $\mathrm{E}(\textbf{x}\otimes\textbf{x})=\mathrm{vec}\{\mathrm{E}(\textbf{x}\textbf{x}\trans )\}=\mathrm{vec}(\boldsymbol{\Sigma})+\beta(\textbf{h}\otimes\textbf{h})$, giving
\begin{align*}
    \mathrm{E}(\textbf{x}\otimes\textbf{x}) \mathrm{E}(\textbf{x}\otimes\textbf{x})\trans =\mathrm{vec}(\boldsymbol{\Sigma})\mathrm{vec}(\boldsymbol{\Sigma})\trans +\beta\mathrm{vec}(\boldsymbol{\Sigma})(\textbf{h}\otimes\textbf{h})\trans +\beta(\textbf{h}\otimes\textbf{h})\mathrm{vec}(\boldsymbol{\Sigma})\trans +\beta^2(\textbf{h}\otimes\textbf{h})(\textbf{h}\otimes\textbf{h})\trans .
\end{align*}
Finally, $\mathrm{Cov}(\textbf{x}\otimes\textbf{x})$ takes the form $(\textbf{I}_{p^2} + \textbf{K}_{p,p})(\boldsymbol{\Sigma}\otimes\boldsymbol{\Sigma} + \beta \textbf{h}\textbf{h}\trans \otimes\boldsymbol{\Sigma}+ \beta \boldsymbol{\Sigma}\otimes\textbf{h}\textbf{h}\trans ) + \beta(1-4\beta)\textbf{h}\textbf{h}\trans \otimes\textbf{h}\textbf{h}\trans $.
\item[5.] For $i=i(k,l)=(k-1)p + l$, $\mathrm{E}\{ (\textbf{x}\otimes\textbf{x})\textbf{x}\trans \textbf{x}\textbf{x}\trans  \}_{i,j}=\mathrm{E}( \| \textbf{x} \|^2 x_kx_lx_j)$. Due to the law of total expectation, we have $ \mathrm{E}\{ (\textbf{x}\otimes\textbf{x})\textbf{x}\trans \textbf{x}\textbf{x}\trans  \} = \alpha_1 \mathrm{E}\{ (\textbf{y}_1\otimes\textbf{y}_1)\textbf{y}_1\trans \textbf{y}_1\textbf{y}_1\trans  \} + \alpha_2 \mathrm{E}\{ (\textbf{y}_2\otimes\textbf{y}_2)\textbf{y}_2\trans \textbf{y}_2\textbf{y}_2\trans  \} $.
We focus now on the first expression in this sum, denoting it by $E_{i,j}(\textbf
y)$ and omitting the subscript ``1'' in the following.

By multivariate Stein's lemma, with $g(\textbf{y})= \| \textbf{y} \|^2 y_k y_l$, the quantity $E_{i,j}(\textbf{y})$ equals,
\begin{align}\label{eq:stein_fifth}
\mathrm{E}( \| \textbf{y} \|^2 y_ky_l)\mu_{j}&+2\sum_{s=1}^p\Sigma_{js}\mathrm{E}(y_sy_ky_l)+\Sigma_{jk}\mathrm{E}(y_l \| \textbf{y} \|^2 )+\Sigma_{jl}\mathrm{E}(y_k \| \textbf{y} \|^2 )\nonumber\\
&=\mathrm{E}( \| \textbf{y} \|^2 (\textbf{y}\otimes\textbf{y})_i\mu_{j})+2\textbf{e}_j\trans \boldsymbol\Sigma\mathrm{E}\{ \textbf{y}(\textbf{y}\otimes\textbf{y})_i \}  + \mathrm{E}\{ (\textbf{e}_j\trans \boldsymbol{\Sigma})_k y_l \| \textbf{y} \|^2 \} +\mathrm{E}\{ y_k (\boldsymbol{\Sigma} \textbf{e}_j)_l \| \textbf{y} \|^2 )\nonumber\\
&= [\mathrm{E}\{ \|\textbf{y}\|^2 (\textbf{y}\otimes\textbf{y})\} \boldsymbol{\mu}\trans ]_{i,j}+2[\mathrm{E}\{(\textbf{y}\otimes\textbf{y})\textbf{y}\trans \}\boldsymbol\Sigma]_{i,j}
+ [\boldsymbol{\Sigma}\otimes\mathrm{E}(\textbf{y} \textbf{y}\trans \textbf{y})]_{i,j}+ [\mathrm{E}(\textbf{y}\textbf{y}\trans \textbf{y})\otimes\boldsymbol{\Sigma}]_{i,j}.
\end{align}
Furthermore, again applying multivariate Stein's lemma with $g(\textbf{y})= \|\textbf{y}\|^2 y_l$ and $i=i(k,l)$, we obtain
$$
\mathrm{E}\{ \|\textbf{y}\|^2 (\textbf{y}\otimes\textbf{y})_i\}=\mathrm{E}(\|\textbf{y}\|^2 y_l)\mu_{k} + 2\sum_{s=1}^p\Sigma_{ks} \mathrm{E}(y_s y_l) + \Sigma_{kl}\mathrm{E}( \| \textbf{y} \|^2 ), 
$$
giving 
\begin{align}\label{eq:stein_fifth_2}
\mathrm{E} \{ \|\textbf{y}&\|^2 (\textbf{y}\otimes\textbf{y}) \} \boldsymbol{\mu}\trans =~ \mathrm{E}\{ \|\textbf{y}\|^2 (\boldsymbol{\mu}\otimes\textbf{y}) \} \boldsymbol{\mu}\trans +2\mathrm{E}(\boldsymbol{\Sigma}\textbf{y}\otimes\textbf{y})\boldsymbol{\mu}\trans  + \mathrm{E}(\|\textbf{y}\|^2) \mathrm{vec}(\boldsymbol{\Sigma})\boldsymbol{\mu}\trans  \nonumber \\
=& \{ \mathrm{tr}(\boldsymbol{\Sigma}) \textbf{I}_{p^2} + \| \boldsymbol{\mu} \|^2 \textbf{I}_{p^2} + 2(\boldsymbol{\Sigma} \otimes \textbf{I}_p) + 2(\textbf{I}_p \otimes \boldsymbol{\Sigma})\} (\boldsymbol{\mu} \otimes\boldsymbol{\mu})\boldsymbol{\mu}\trans  + \{ \mathrm{tr}(\boldsymbol{\Sigma}) \textbf{I}_{p^2} + \| \boldsymbol{\mu} \|^2 \textbf{I}_{p^2} + 2(\boldsymbol{\Sigma} \otimes \textbf{I}_p) \} \mathrm{vec}(\boldsymbol{\Sigma})\boldsymbol{\mu}\trans , 
\end{align}
where we have used the fact that $ \mathrm{E}(\| \textbf{y} \|^2 \textbf{y}) = 2 \boldsymbol{\Sigma} \boldsymbol{\mu} + \| \boldsymbol{\mu} \|^2 \boldsymbol{\mu} + \mathrm{tr}(\boldsymbol{\Sigma}) \boldsymbol{\mu} $ by Stein's lemma.
The three final terms in \eqref{eq:stein_fifth} contribute the following quantities to the final sum, respectively,
\begin{align*}
2 \mathrm{E}\{ (\textbf{x}\otimes\textbf{x})\textbf{x}\trans  \} \boldsymbol{\Sigma} = 2 \beta \gamma (\textbf{h}\otimes\textbf{h})\textbf{h}\trans  \boldsymbol{\Sigma}, \quad\boldsymbol{\Sigma} \otimes \mathrm{E}(\textbf{x} \textbf{x}\trans \textbf{x})&=\beta \gamma \| \textbf{h} \|^2 ( \boldsymbol{\Sigma} \otimes \textbf{h} ), \quad \mathrm{E}(\textbf{x} \textbf{x}\trans \textbf{x}) \otimes \boldsymbol{\Sigma} =\beta \gamma \| \textbf{h} \|^2 ( \textbf{h} \otimes \boldsymbol{\Sigma} ).
\end{align*}
The contribution of the term corresponding to \eqref{eq:stein_fifth_2} can be obtained with the help of the formula $\alpha_1^2 + \alpha_2^2 = 1 - 2 \beta$, and putting now all four terms together and subtracting $\mathrm{E}(\textbf{x} \otimes \textbf{x}) \mathrm{E}(\textbf{x}\textbf{x}\trans  \textbf{x})\trans $ gives the claim.
\item[6.] We focus first on $\mathrm{E}(\textbf{x})=\mathrm{E} \{ (\textbf{x}\trans \textbf{x})^2\textbf{x}\textbf{x}\trans  \}=\alpha_1\mathbb {E}(\|\textbf{y}_1\|^4 \textbf{y}_1 \textbf{y}_1\trans ) + \alpha_2\mathbb {E}(\|\textbf{y}_2\|^4 \textbf{y}_2 \textbf{y}_2\trans )$.
Then, dropping the subscript ``1'' for convenience, the $(i, j)$-element of the $\textbf{y}_1$-part equals, by Stein's lemma with $g(\textbf{y})=\| \textbf{y} \|^4 y_i$,
$$
\mathrm{E}\{ (\textbf{y}\trans \textbf{y})^2 y_i y_j \} = \mathrm{E} \{ (\textbf{y}\trans \textbf{y})^2y_i \}\mu_{j}+4 \sum_{s=1}^p \Sigma_{js} \mathrm{E}( \| \textbf{y} \|^2 y_i y_s)+\Sigma_{ij}\mathrm{E}( \|\textbf{y}\|^4 ),
$$
implying $\mathrm{E}( \| \textbf{y} \|^4 \textbf{y} \textbf{y}\trans  )=\mathrm{E}( \| \textbf{y} \|^4 \textbf{y})\boldsymbol{\mu}\trans +4\mathrm{E}( \| \textbf{y} \|^2 \textbf{y}\textbf{y}\trans )\boldsymbol{\Sigma}+\mathrm{E}(\| \textbf{y} \|^4)\boldsymbol{\Sigma} $. Using Stein's lemma with $g(\textbf{y})= \| \textbf{y} \|^4 $ we get
$$
\mathrm{E}( \| \textbf{y} \|^4 y_i) = \mathrm{E}( \| \textbf{y} \|^4 )\mu_{i}+4\sum_{s=1}^p\Sigma_{is}\mathrm{E}(\| \textbf{y} \|^2 y_s),
$$
further implying that $\mathrm{E}( \| \textbf{y} \|^4 \textbf{y})=\mathrm{E}( \| \textbf{y} \|^4 )\boldsymbol{\mu}+4\boldsymbol{\Sigma}\mathrm{E}(\textbf{y} \textbf{y}\trans \textbf{y})$.
Hence, we have the identity $\mathrm{E}(\|\textbf{y}\|^4\textbf{y\textbf{y}}\trans )= 4\boldsymbol{\Sigma}\mathrm{E}(\textbf{y} \textbf{y}\trans \textbf{y})\boldsymbol{\mu}\trans +4\mathrm{E}( \|\textbf{y}\|^2 \textbf{y}\textbf{y}\trans )\boldsymbol{\Sigma}+\mathrm{E}(\| \textbf{y} \|^4)(\boldsymbol{\Sigma} + \boldsymbol{\mu} \boldsymbol{\mu}\trans )$.
We next inspect the above three terms one-by-one (using the moment formulas derived earlier in the proof): 
\begin{itemize}
    \item The first one equals,
\begin{align*}
4\boldsymbol{\Sigma}\mathrm{E}(\textbf{y}\textbf{y}\trans \textbf{y})\boldsymbol{\mu}\trans &=4\mathrm{tr}(\boldsymbol{\Sigma})\boldsymbol{\Sigma}\boldsymbol{\mu}\boldsymbol{\mu}\trans +8\boldsymbol{\Sigma}^2\boldsymbol{\mu}\boldsymbol{\mu}\trans +4 \|\boldsymbol{\mu}\|^2 \boldsymbol{\Sigma} \boldsymbol{\mu} \boldsymbol{\mu}\trans = 4 \{ \mathrm{tr}(\boldsymbol{\Sigma}) \textbf{I}_p + 2 \boldsymbol{\Sigma} + \|\boldsymbol{\mu}\|^2 \textbf{I}_p \} \boldsymbol{\Sigma} \boldsymbol{\mu} \boldsymbol{\mu}\trans .
\end{align*}

\item The second one equals, \begin{align*}
    4\mathrm{E}( \| \textbf{y} \|^2 \textbf{y}\textbf{y}\trans )\boldsymbol{\Sigma} &= 4\mathrm{E}( \| \textbf{y} \|^2 \boldsymbol{\mu} \textbf{y}\trans )\boldsymbol{\Sigma} + 8 \boldsymbol{\Sigma} \mathrm{E}( \textbf{y}\textbf{y}\trans )\boldsymbol{\Sigma} + 4 \mathrm{E}( \| \textbf{y} \|^2) \boldsymbol{\Sigma}^2 \\
    &= 8 \boldsymbol{\mu} \boldsymbol{\mu}\trans  \boldsymbol{\Sigma}^2 + 4 \| \boldsymbol{\mu} \|^2 \boldsymbol{\mu} \boldsymbol{\mu}\trans  \boldsymbol{\Sigma} + 4 \mathrm{tr}(\boldsymbol{\Sigma}) \boldsymbol{\mu} \boldsymbol{\mu}\trans  \boldsymbol{\Sigma}+ 8 \boldsymbol{\Sigma}^3 + 8 \boldsymbol{\Sigma} \boldsymbol{\mu} \boldsymbol{\mu}\trans  \boldsymbol{\Sigma} + 4 \mathrm{tr}(\boldsymbol{\Sigma}) \boldsymbol{\Sigma}^2  + 4  \| \boldsymbol{\mu} \|^2 \boldsymbol{\Sigma}^2  \\
    &=4 \{ \mathrm{tr}(\boldsymbol{\Sigma})\boldsymbol{\Sigma}+2\boldsymbol{\Sigma}^2+2\boldsymbol{\mu}\boldsymbol{\mu}\trans \boldsymbol{\Sigma}+\mathrm{tr}(\boldsymbol{\Sigma})\boldsymbol{\mu}\boldsymbol{\mu}\trans +2\boldsymbol{\Sigma}\boldsymbol{\mu}\boldsymbol{\mu}\trans +\|\boldsymbol{\mu}\|^2\boldsymbol{\mu} \boldsymbol{\mu}\trans  +\|\boldsymbol{\mu}\|^2\boldsymbol{\Sigma} \} \boldsymbol{\Sigma}.
\end{align*}
\item The third one equals,
\begin{align*}
    \mathrm{E}(\| \textbf{y} \|^4)(\boldsymbol{\Sigma} + \boldsymbol{\mu} \boldsymbol{\mu}\trans )= [ 2 \mathrm{tr}(\boldsymbol{\Sigma}^2) + \{ \mathrm{tr} (\boldsymbol{\Sigma}) \}^2 + 2 \mathrm{tr}(\boldsymbol{\Sigma}) \|\boldsymbol{\mu} \|^2 + 4 \boldsymbol{\mu}\trans  \boldsymbol{\Sigma} \boldsymbol{\mu} + \| \boldsymbol{\mu}\|^4 ] (\boldsymbol{\Sigma} + \boldsymbol{\mu} \boldsymbol{\mu}\trans ).
\end{align*}
\end{itemize}

Therefore,

\begin{align*}
    \mathrm{E}(\|\textbf{y}\|^4\textbf{y\textbf{y}}\trans ) = ~&4 \{ \mathrm{tr}(\boldsymbol{\Sigma}) \textbf{I}_p + 2 \boldsymbol{\Sigma} + \|\boldsymbol{\mu}\|^2 \textbf{I}_p \} \boldsymbol{\Sigma} \boldsymbol{\mu} \boldsymbol{\mu}\trans  + 4 \{ \mathrm{tr}(\boldsymbol{\Sigma})\boldsymbol{\Sigma}+2\boldsymbol{\Sigma}^2+2\boldsymbol{\mu}\boldsymbol{\mu}\trans \boldsymbol{\Sigma}+\mathrm{tr}(\boldsymbol{\Sigma})\boldsymbol{\mu}\boldsymbol{\mu}\trans +2\boldsymbol{\Sigma}\boldsymbol{\mu}\boldsymbol{\mu}\trans \\
    &+\|\boldsymbol{\mu}\|^2\boldsymbol{\mu} \boldsymbol{\mu}\trans  +\|\boldsymbol{\mu}\|^2\boldsymbol{\Sigma} \} \boldsymbol{\Sigma} + [ 2 \mathrm{tr}(\boldsymbol{\Sigma}^2) + \{ \mathrm{tr} (\boldsymbol{\Sigma}) \}^2 + 2 \mathrm{tr}(\boldsymbol{\Sigma}) \|\boldsymbol{\mu} \|^2 + 4 \boldsymbol{\mu}\trans  \boldsymbol{\Sigma} \boldsymbol{\mu} + \| \boldsymbol{\mu}\|^4 ] (\boldsymbol{\Sigma} + \boldsymbol{\mu} \boldsymbol{\mu}\trans ).
\end{align*}


Thus, finally, using $\alpha_1^5 + \alpha_2^5 = 1 - 5 \beta + 5 \beta^2$ and $\alpha_1^3 + \alpha_2^3 = 1 - 3 \beta$,
\begin{align*}
\mathrm{E}( \| \textbf{x} \|^4 \textbf{x}\textbf{x}\trans )&=4\beta\mathrm{tr}(\boldsymbol{\Sigma})\boldsymbol{\Sigma}\textbf{h}\textbf{h}\trans +8\beta\boldsymbol{\Sigma}^2\textbf{h}\textbf{h}\trans +4\beta (1 - 3\beta) \|\textbf{h}\|^2 \boldsymbol{\Sigma}\textbf{h}\textbf{h}\trans + [ 2 \mathrm{tr}(\boldsymbol{\Sigma}^2) + \{ \mathrm{tr} (\boldsymbol{\Sigma}) \}^2] (\boldsymbol{\Sigma} + \beta \textbf{h} \textbf{h}\trans ) \\
&+4 \left\{ \mathrm{tr}(\boldsymbol{\Sigma})\boldsymbol{\Sigma}+2\boldsymbol{\Sigma}^2+2\beta\textbf{h}\textbf{h}\trans \boldsymbol{\Sigma}+\beta\mathrm{tr}(\boldsymbol{\Sigma})\textbf{h}\textbf{h}\trans +2\beta\boldsymbol{\Sigma}\textbf{h}\textbf{h}+\beta (1 - 3 \beta) \|\textbf{h}\|^2\textbf{h}\textbf{h}\trans +\beta\|\textbf{h}\|^2\boldsymbol{\Sigma} \right\} \boldsymbol{\Sigma} \\
&+ \beta \{ 2 \mathrm{tr}(\boldsymbol{\Sigma}) \| \textbf{h} \|^2 + 4 \textbf{h}\trans  \boldsymbol{\Sigma} \textbf{h} \} \{ \boldsymbol{\Sigma}  + (1 - 3 \beta) \textbf{h} \textbf{h}\trans  \} + \beta \| \textbf{h} \|^4 \{ (1 - 3 \beta) \boldsymbol{\Sigma} + (1 - 5\beta + 5 \beta^2) \textbf{h} \textbf{h}\trans \}.
\end{align*}

Subtracting now $\beta^2\gamma^2\|\textbf{h}\|^4\textbf{h}\textbf{h}\trans $ and using $\gamma^2 = 1 - 4 \beta$ and $ 1 - 6 \beta + 9 \beta^2 = (1 - 3 \beta)^2 $ yields the claim.



\end{itemize}

\end{proof}




\begin{proof}[Proof of Theorem \ref{theo:limiting_1}]
By the delta method, the normalized estimator has
\begin{align*}
    \sqrt{n} \left( \frac{\hat{\boldsymbol{\theta}}}{\| \hat{\boldsymbol{\theta}} \|} - \frac{\boldsymbol{\theta}}{\| \boldsymbol{\theta} \|} \right) = \frac{1}{\| \boldsymbol{\theta} \|} \left( \textbf{I}_p - \frac{\boldsymbol{\theta} \boldsymbol{\theta}\trans }{\| \boldsymbol{\theta} \|^2} \right) \sqrt{n}(\hat{\boldsymbol{\theta}} - \boldsymbol{\theta}) + o_P(1).
\end{align*}
Denoting the projection matrix onto the orthogonal complement of $\boldsymbol{\theta}$ by $\textbf{Q}_{\boldsymbol{\theta}}$, we thus have, by Lemmas \ref{lem:joint_limiting} and \ref{lem:method_of_moments_sample} (using their notation) that
\begin{align*}
    \boldsymbol{\Upsilon}_{\mathrm{M}} =& \frac{1}{\| \boldsymbol{\theta} \|^2} \textbf{Q}_{\boldsymbol{\theta}} (\boldsymbol{\theta}\trans  \otimes \boldsymbol{\Sigma}^{-1}) \boldsymbol{\Theta}_{1,1} (\boldsymbol{\theta} \otimes \boldsymbol{\Sigma}^{-1}) \textbf{Q}_{\boldsymbol{\theta}}   - \frac{1}{\| \boldsymbol{\theta} \|^2} \textbf{Q}_{\boldsymbol{\theta}} (\boldsymbol{\theta}\trans  \otimes \boldsymbol{\Sigma}^{-1}) \boldsymbol{\Theta}_{1,2} \textbf{A}\trans  \{ (1 + \beta \tau ) \boldsymbol{\Sigma}^{-1} + \beta \boldsymbol{\theta} \boldsymbol{\theta}\trans  \} \textbf{Q}_{\boldsymbol{\theta}} \\
    -& \frac{1}{\| \boldsymbol{\theta} \|^2} \textbf{Q}_{\boldsymbol{\theta}} \{ (1 + \beta \tau ) \boldsymbol{\Sigma}^{-1} + \beta \boldsymbol{\theta} \boldsymbol{\theta}\trans  \} \textbf{A} \boldsymbol{\Theta}_{2,1}  (\boldsymbol{\theta} \otimes \boldsymbol{\Sigma}^{-1}) \textbf{Q}_{\boldsymbol{\theta}}     + \frac{1}{\| \boldsymbol{\theta} \|^2} \textbf{Q}_{\boldsymbol{\theta}} \{ (1 + \beta \tau ) \boldsymbol{\Sigma}^{-1} + \beta \boldsymbol{\theta} \boldsymbol{\theta}\trans  \} \textbf{A} \boldsymbol{\Theta}_{2,2}  \textbf{A}\trans  \{ (1 + \beta \tau ) \boldsymbol{\Sigma}^{-1} + \beta \boldsymbol{\theta} \boldsymbol{\theta}\trans  \} \textbf{Q}_{\boldsymbol{\theta}}.
\end{align*}

The expressions for the matrices $\boldsymbol{\Theta}_{k, \ell}$ are given in Lemma \ref{lem:joint_limiting} as functions of the blocks $\textbf{V}_{k', \ell'}$, which themselves are computed in Lemma \ref{lem:moment_gathering_lemma}. Plugging everything in above and simplifying (using the fact that $\textbf{Q}_{\boldsymbol{\theta}} \boldsymbol{\theta} = \textbf{0}$ whenever possible) gives then the desired expression.
\end{proof}

\subsection{Proofs of the results in Section \ref{sec:ae}}

\begin{lemma}\label{lem:auxiliary_1}
    Let the assumptions of Theorem \ref{theo:form_of_the_limiting_cov} hold and let
    \begin{align*}
        \textbf{z} \sim \alpha_1 \mathcal{N}_p(- \alpha_2 \boldsymbol{\Sigma}^{-1/2} \textbf{h}, \textbf{I}_p) + \alpha_2 \mathcal{N}_p(\alpha_1 \boldsymbol{\Sigma}^{-1/2} \textbf{h}, \textbf{I}_p).
    \end{align*}
    Using the notation $\textbf{m} := \boldsymbol{\Sigma}^{-1/2} \textbf{h}$, we then have
    \begin{align*}
        & \sqrt{n} \left\{ \frac{\hat{\boldsymbol{\theta}}(\boldsymbol{\Sigma}^{1/2} \textbf{z}_i)}{\| \hat{\boldsymbol{\theta}}(\boldsymbol{\Sigma}^{1/2} \textbf{z}_i) \|} - \frac{\boldsymbol{\theta}}{\| \boldsymbol{\theta} \|} \right\}  = \frac{\| \textbf{m} \|}{\| \boldsymbol{\theta} \|} \textbf{Q}_{\boldsymbol{\theta}} \boldsymbol{\Sigma}^{-1/2} \sqrt{n} \left\{ \frac{\hat{\boldsymbol{\theta}}( \textbf{z}_i)}{\| \hat{\boldsymbol{\theta}}( \textbf{z}_i) \|} - \frac{\textbf{m}}{\| \textbf{m} \|} \right\} + o_P(1).
    \end{align*}
\end{lemma}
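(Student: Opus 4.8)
The plan is to exploit affine equivariance to collapse the problem into a single delta-method computation for the normalization map. First I would observe that the auxiliary law is merely a whitened copy of the original model: since $\boldsymbol{\Sigma}^{1/2}$ is symmetric, left-multiplying $\textbf{z}$ by $\boldsymbol{\Sigma}^{1/2}$ sends the component means $-\alpha_2\textbf{m}$ and $\alpha_1\textbf{m}$ to $-\alpha_2\textbf{h}$ and $\alpha_1\textbf{h}$ and the identity covariance to $\boldsymbol{\Sigma}$, so that $\boldsymbol{\Sigma}^{1/2}\textbf{z}$ has exactly the law of $\textbf{x}$ in \eqref{eq:model_mixture} with $\mathrm{E}(\textbf{x}) = \textbf{0}$. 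Hence $\hat{\boldsymbol{\theta}}(\boldsymbol{\Sigma}^{1/2}\textbf{z}_i)$ is the estimator evaluated on data from the original model. Writing $\boldsymbol{\Sigma}^{1/2}\textbf{z}_i = (\boldsymbol{\Sigma}^{1/2})\trans\textbf{z}_i + \textbf{0}$ and applying affine equivariance with $\textbf{A} = \boldsymbol{\Sigma}^{1/2}$ and $\textbf{b} = \textbf{0}$ yields the key identity $\hat{\boldsymbol{\theta}}(\boldsymbol{\Sigma}^{1/2}\textbf{z}_i) = \boldsymbol{\Sigma}^{-1/2}\hat{\boldsymbol{\theta}}(\textbf{z}_i)$.

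Next I would eliminate the norms. Because the left-hand side of the claim is normalized, it depends on $\hat{\boldsymbol{\theta}}(\textbf{z}_i)$ only through its direction $\hat{\textbf{u}} := \hat{\boldsymbol{\theta}}(\textbf{z}_i)/\| \hat{\boldsymbol{\theta}}(\textbf{z}_i) \|$, and the identity above gives $\hat{\boldsymbol{\theta}}(\boldsymbol{\Sigma}^{1/2}\textbf{z}_i)/\| \hat{\boldsymbol{\theta}}(\boldsymbol{\Sigma}^{1/2}\textbf{z}_i) \| = g(\hat{\textbf{u}})$, where $g(\textbf{u}) := \boldsymbol{\Sigma}^{-1/2}\textbf{u}/\| \boldsymbol{\Sigma}^{-1/2}\textbf{u} \|$. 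By assumption (ii) of Theorem \ref{theo:form_of_the_limiting_cov} applied to the $\textbf{z}$-model, whose population direction is $\textbf{I}_p^{-1}\textbf{m} = \textbf{m}$, together with the delta method for the normalization map, $\hat{\textbf{u}}$ is $\sqrt{n}$-consistent for $\textbf{m}/\| \textbf{m} \|$; I would assume without loss of generality that the corresponding constant $B$ is positive, so that $\hat{\textbf{u}} \to \textbf{m}/\| \textbf{m} \|$, since otherwise one replaces $\hat{\boldsymbol{\theta}}$ by $-\hat{\boldsymbol{\theta}}$, which is again affine equivariant and leaves both normalized estimators unchanged up to a sign matched on the two sides. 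Crucially, $g$ maps the target correctly, as $\boldsymbol{\Sigma}^{-1/2}\textbf{m} = \boldsymbol{\Sigma}^{-1/2}\boldsymbol{\Sigma}^{-1/2}\textbf{h} = \boldsymbol{\theta}$ gives $g(\textbf{m}/\| \textbf{m} \|) = \boldsymbol{\theta}/\| \boldsymbol{\theta} \|$, which is precisely the centering on the left.

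Finally, I would apply the delta method to $g$ at $\textbf{m}/\| \textbf{m} \|$. Differentiating the composition of the linear map $\textbf{u} \mapsto \boldsymbol{\Sigma}^{-1/2}\textbf{u}$ with the normalization $\textbf{w} \mapsto \textbf{w}/\| \textbf{w} \|$ (whose derivative is $\| \textbf{w} \|^{-1}\textbf{Q}_{\textbf{w}}$) produces the Jacobian $\| \textbf{w} \|^{-1}\textbf{Q}_{\textbf{w}}\boldsymbol{\Sigma}^{-1/2}$ evaluated at $\textbf{w} = \boldsymbol{\Sigma}^{-1/2}\textbf{m}/\| \textbf{m} \| = \boldsymbol{\theta}/\| \textbf{m} \|$. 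Since the projection is scale-invariant, $\textbf{Q}_{\textbf{w}} = \textbf{Q}_{\boldsymbol{\theta}}$, and $\| \textbf{w} \| = \| \boldsymbol{\theta} \|/\| \textbf{m} \|$, so this Jacobian equals $(\| \textbf{m} \|/\| \boldsymbol{\theta} \|)\textbf{Q}_{\boldsymbol{\theta}}\boldsymbol{\Sigma}^{-1/2}$. The delta method applied to $\sqrt{n}\{ g(\hat{\textbf{u}}) - g(\textbf{m}/\| \textbf{m} \|) \}$ then yields exactly the stated expansion, with the $o_P(1)$ remainder coming from the $\sqrt{n}$-boundedness of $\hat{\textbf{u}} - \textbf{m}/\| \textbf{m} \|$. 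The only genuinely delicate point is the sign bookkeeping of the limit of $\hat{\textbf{u}}$, ensuring that the centering vectors $\boldsymbol{\theta}/\| \boldsymbol{\theta} \|$ and $\textbf{m}/\| \textbf{m} \|$ carry consistent signs on the two sides; the Jacobian calculation itself is entirely routine.
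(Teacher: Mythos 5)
Your proposal is correct and follows essentially the same route as the paper: both arguments hinge on the affine-equivariance identity $\hat{\boldsymbol{\theta}}(\boldsymbol{\Sigma}^{1/2}\textbf{z}_i)=\boldsymbol{\Sigma}^{-1/2}\hat{\boldsymbol{\theta}}(\textbf{z}_i)$ followed by a first-order expansion of the renormalization, with your Jacobian $(\|\textbf{m}\|/\|\boldsymbol{\theta}\|)\,\textbf{Q}_{\boldsymbol{\theta}}\boldsymbol{\Sigma}^{-1/2}$ matching what the paper obtains by its more laborious term-by-term decomposition of the norm ratio. Your packaging of the whole computation as a single delta-method application to the composite map $\textbf{u}\mapsto\boldsymbol{\Sigma}^{-1/2}\textbf{u}/\|\boldsymbol{\Sigma}^{-1/2}\textbf{u}\|$ is a cleaner presentation of the same argument, and your sign bookkeeping via the constant $B$ is consistent with the paper's implicit treatment of its constant $H$.
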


\begin{proof}[Proof of Lemma \ref{lem:auxiliary_1}]
    By affine equivariance, the left-hand side can be expanded as
    \begin{align}\label{eq:expansion_1}
    \begin{split}
        & \sqrt{n} \left\{ \frac{\hat{\boldsymbol{\theta}}(\boldsymbol{\Sigma}^{1/2} \textbf{z}_i)}{\| \hat{\boldsymbol{\theta}}(\boldsymbol{\Sigma}^{1/2} \textbf{z}_i) \|} - \frac{\boldsymbol{\theta}}{\| \boldsymbol{\theta} \|} \right\} = \boldsymbol{\Sigma}^{-1/2} \sqrt{n} \left\{ \frac{\hat{\boldsymbol{\theta}}(\textbf{z}_i)}{\| \hat{\boldsymbol{\theta}}(\textbf{z}_i) \|} - \frac{\textbf{m}}{\| \textbf{m} \|} \right\} \frac{\| \hat{\boldsymbol{\theta}}(\textbf{z}_i) \|}{\| \boldsymbol{\Sigma}^{-1/2} \hat{\boldsymbol{\theta}}(\textbf{z}_i) \|} + \frac{\boldsymbol{\theta}}{\| \textbf{m} \|} \sqrt{n} \left\{ \frac{ \| \hat{\boldsymbol{\theta}}(\textbf{z}_i) \|}{\| \boldsymbol{\Sigma}^{-1/2} \hat{\boldsymbol{\theta}}(\textbf{z}_i) \|} - \frac{\| \textbf{m} \|}{\| \boldsymbol{\theta} \|} \right\}.
    \end{split}
    \end{align}
    By Slutsky's lemma, the first term on the RHS of \eqref{eq:expansion_1} equals
    \begin{align*}
        \frac{\| \textbf{m} \|}{\| \boldsymbol{\theta} \|} \boldsymbol{\Sigma}^{-1/2} \sqrt{n} \left\{ \frac{\hat{\boldsymbol{\theta}}(\textbf{z}_i)}{\| \hat{\boldsymbol{\theta}}(\textbf{z}_i) \|} - \frac{\textbf{m}}{\| \textbf{m} \|} \right\} + o_P(1).
    \end{align*}
    Let now $H$ be a constant such that $\hat{\boldsymbol{\theta}}(\textbf{z}_i) \rightarrow_P H \textbf{m}$. Then, the $\sqrt{n}$-part of the second term on the RHS of \eqref{eq:expansion_1} can be written as
    \begin{align}\label{eq:expansion_2}
        \frac{1}{H \| \boldsymbol{\theta} \|}\sqrt{n} \{ \| \hat{\boldsymbol{\theta}}(\textbf{z}_i) \| - H \| \textbf{m} \|\} + H \| \textbf{m} \| \sqrt{n} \left\{ \frac{1}{\| \boldsymbol{\Sigma}^{-1/2} \hat{\boldsymbol{\theta}}(\textbf{z}_i) \|} - \frac{1}{H \| \boldsymbol{\theta} \|} \right\} + o_P(1).
    \end{align}
    The delta method shows that the second term in \eqref{eq:expansion_2} has the form
    \begin{align*}
        & - \frac{\| \textbf{m} \|}{2 H^2 \| \boldsymbol{\theta} \|^3} \sqrt{n} \{ \| \boldsymbol{\Sigma}^{-1/2} \hat{\boldsymbol{\theta}}(\textbf{z}_i) \|^2 - H^2 \| \boldsymbol{\theta} \|^2 \} + o_P(1) = - \frac{\| \textbf{m} \|}{H \| \boldsymbol{\theta} \|^3} \sqrt{n} \{ \hat{\boldsymbol{\theta}}(\textbf{z}_i) - H \textbf{m} \}\trans  \boldsymbol{\Sigma}^{-1} \textbf{m} + o_P(1).
    \end{align*}
    Finally, $\sqrt{n} \{ \hat{\boldsymbol{\theta}}(\textbf{z}_i) - H \textbf{m} \}$ can be written as
    \begin{align*}
        & \sqrt{n} \{ \hat{\boldsymbol{\theta}}(\textbf{z}_i) - H \textbf{m} \} = \sqrt{n} \left\{ \frac{\hat{\boldsymbol{\theta}}(\textbf{z}_i)}{\| \hat{\boldsymbol{\theta}}(\textbf{z}_i) \|} - \frac{\textbf{m}}{\| \textbf{m} \|} \right\} H \| \textbf{m} \| + \frac{\textbf{m}}{\|\textbf{m}\|} \sqrt{n} \{ \|\hat{\boldsymbol{\theta}}(\textbf{z}_i)\| - H \|\textbf{m}\| \} + o_P(1).
    \end{align*}
    Collecting everything to \eqref{eq:expansion_2}, we see that the second term on the RHS of \eqref{eq:expansion_1} is
    \begin{align*}
        -\frac{\| \textbf{m} \|}{\| \boldsymbol{\theta} \|^3} \boldsymbol{\theta} \boldsymbol{\theta}\trans  \boldsymbol{\Sigma}^{-1/2} \sqrt{n} \left\{ \frac{\hat{\boldsymbol{\theta}}(\textbf{z}_i)}{\| \hat{\boldsymbol{\theta}}(\textbf{z}_i) \|} - \frac{\textbf{m}}{\| \textbf{m} \|} \right\} + o_P(1).
    \end{align*}
    Putting now everything together to \eqref{eq:expansion_1}, we obtain the claim.
\end{proof}

\begin{lemma}\label{lem:auxiliary_2}
    Let $\textbf{u}_1 \in \mathbb{R}^p$, $\| \textbf{u}_1 \| = 1$, be fixed and asumme that that $\textbf{B} \in \mathbb{R}^{p \times p}$ is a symmetric matrix satsfying
    \begin{align*}
        \textbf{B} = \textbf{O} \textbf{B} \textbf{O}\trans ,
    \end{align*}
    for all $p \times p$ orthogonal matrices $\textbf{O}$ for which $\textbf{O} \textbf{u}_1 = \textbf{u}_1$. Then, there exists $a, b \in \mathbb{R}$ such that
    \begin{align*}
        \textbf{B} = a \textbf{u}_1 \textbf{u}_1 + b (\textbf{I}_p - \textbf{u}_1 \textbf{u}_1\trans ).
    \end{align*}
\end{lemma}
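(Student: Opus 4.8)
The plan is to reinterpret the hypothesis as a commutation relation and then exploit the transitive action of the relevant orthogonal group on the unit sphere of the orthogonal complement of $\textbf{u}_1$. Since each $\textbf{O}$ in the statement is orthogonal, $\textbf{O}\trans = \textbf{O}^{-1}$, so the assumption $\textbf{B} = \textbf{O}\textbf{B}\textbf{O}\trans$ is equivalent to $\textbf{B}\textbf{O} = \textbf{O}\textbf{B}$ for every $\textbf{O}$ in the stabilizer $G := \{ \textbf{O} \in O(p) : \textbf{O}\textbf{u}_1 = \textbf{u}_1 \}$. Note that $G$ fixes $\textbf{u}_1$ and restricts to the full orthogonal group acting on the orthogonal complement of $\textbf{u}_1$, since any orthogonal map of that complement extends to an element of $G$ by declaring it to fix $\textbf{u}_1$. (The case $p = 1$ is trivial, so I assume $p \geq 2$.)

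First I would show that $\textbf{u}_1$ is an eigenvector of $\textbf{B}$. Applying the commutation relation to $\textbf{u}_1$ gives $\textbf{O}(\textbf{B}\textbf{u}_1) = \textbf{B}\textbf{O}\textbf{u}_1 = \textbf{B}\textbf{u}_1$ for all $\textbf{O} \in G$, so $\textbf{B}\textbf{u}_1$ is fixed by every element of $G$. Decomposing $\textbf{B}\textbf{u}_1 = (\textbf{u}_1\trans \textbf{B}\textbf{u}_1)\textbf{u}_1 + \textbf{w}$ with $\textbf{w}$ orthogonal to $\textbf{u}_1$, invariance forces $\textbf{O}\textbf{w} = \textbf{w}$ for every orthogonal transformation of the complement; since the only such common fixed vector is $\textbf{0}$, we get $\textbf{w} = \textbf{0}$ and hence $\textbf{B}\textbf{u}_1 = a\textbf{u}_1$ with $a := \textbf{u}_1\trans \textbf{B}\textbf{u}_1$. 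Because $\textbf{B}$ is symmetric, this immediately makes the orthogonal complement of $\textbf{u}_1$ an invariant subspace of $\textbf{B}$: for $\textbf{v}\trans\textbf{u}_1 = 0$ we have $(\textbf{B}\textbf{v})\trans\textbf{u}_1 = \textbf{v}\trans \textbf{B}\textbf{u}_1 = a\,\textbf{v}\trans\textbf{u}_1 = 0$.

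It then remains to show that the restriction of $\textbf{B}$ to the orthogonal complement of $\textbf{u}_1$ is a scalar multiple of the identity, and this is the crux of the argument. The restriction is symmetric and commutes with the full orthogonal group of the complement, so I would argue as follows: pick a unit eigenvector $\textbf{p}$ of the restriction with eigenvalue $b$; for any other unit vector $\textbf{q}$ orthogonal to $\textbf{u}_1$ there is $\textbf{O} \in G$ with $\textbf{O}\textbf{p} = \textbf{q}$, whence $\textbf{B}\textbf{q} = \textbf{B}\textbf{O}\textbf{p} = \textbf{O}\textbf{B}\textbf{p} = b\,\textbf{O}\textbf{p} = b\textbf{q}$, so every vector in the complement is an eigenvector with the same eigenvalue $b$. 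Consequently $\textbf{B}$ acts as $a$ on $\mathrm{span}(\textbf{u}_1)$ and as $b$ on its orthogonal complement, and since the corresponding orthogonal projections are $\textbf{u}_1\textbf{u}_1\trans$ and $\textbf{I}_p - \textbf{u}_1\textbf{u}_1\trans$, we conclude $\textbf{B} = a\textbf{u}_1\textbf{u}_1\trans + b(\textbf{I}_p - \textbf{u}_1\textbf{u}_1\trans)$, as claimed. The only subtlety worth flagging is the transitivity step---that $G$ acts transitively on the unit sphere of the complement---which is what lets a single eigenvalue propagate to all directions; this relies on $p \geq 2$ and is immediate from the fact that $G$ realizes the full orthogonal group there.
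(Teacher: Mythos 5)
Your proof is correct, but it takes a different route from the paper's. The paper works in an orthogonal basis $\textbf{U} = (\textbf{u}_1, \ldots, \textbf{u}_p)$, writes $\textbf{B} = \textbf{U} \textbf{R} \textbf{U}\trans$, and then invokes only a \emph{finite} collection of specific matrices $\textbf{O}$ fixing $\textbf{u}_1$ --- sign flips of a single basis vector $\textbf{u}_k$ ($k \geq 2$) to annihilate the off-diagonal entries of $\textbf{R}$, and transpositions of two such basis vectors to force the last $p-1$ diagonal entries to coincide --- concluding $\textbf{R} = \mathrm{diag}(a, b, \ldots, b)$. You instead recast the hypothesis as the commutation relation $\textbf{B}\textbf{O} = \textbf{O}\textbf{B}$ over the stabilizer $G$ of $\textbf{u}_1$, first deduce that $\textbf{u}_1$ is an eigenvector (the component of $\textbf{B}\textbf{u}_1$ in the complement is a common fixed vector of the full orthogonal group of that complement, hence zero), and then use transitivity of $G$ on the unit sphere of $\textbf{u}_1^{\perp}$ to propagate a single eigenvalue to every direction there. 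Both arguments are complete; the paper's is more elementary in that it needs only signed permutations (a finite subgroup of $G$), which makes each verification a one-line matrix identity, whereas yours is coordinate-free and makes the underlying representation-theoretic mechanism (Schur-type rigidity under an irreducible action) more transparent. One small point worth noting either way: the eigenvector step for $\textbf{u}_1$, which you treat explicitly, is handled in the paper implicitly by the vanishing of the first row and column of $\textbf{R}$ off the $(1,1)$ entry; your explicit treatment is a clean alternative and correctly flags that the argument needs $p \geq 2$ only for the transitivity step.
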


\begin{proof}[Proof of Lemma \ref{lem:auxiliary_1}]
Choose the unit-length vectors $\textbf{u}_2, \ldots, \textbf{u}_p$ such that $\textbf{U} = (\textbf{u}_1, \ldots, \textbf{u}_p) \in \mathbb{R}^{p \times p}$ is an orthogonal matrix. Parametrize then $\textbf{B} := \textbf{U} \textbf{R} \textbf{U}\trans $. Choosing now $\textbf{O}$ such that $\textbf{O} \textbf{U} = (\textbf{u}_1, -\textbf{u}_2, \ldots, \textbf{u}_p)$, we obtain the equation $\textbf{U}\trans  (\textbf{O} \textbf{U}) \textbf{R} (\textbf{O} \textbf{U})\trans  \textbf{U} = \textbf{R}$,
which shows that all elements expect $(2, 2)$ in the second row and column of $\textbf{R}$ must be zero. Similarly we can show that all other off-diagonal elements of $\textbf{R}$ must be zero as well. Taking then $\textbf{O}$ to be such that it permutes two of the columns $\textbf{u}_2, \ldots, \textbf{u}_p$ of $\textbf{U}$, we find that the final $p - 1$ diagonal elements of $\textbf{R}$ must be equal. Hence, $\textbf{R} = \mathrm{diag}(a, b, \ldots, b)$ for some $a, b \in \mathbb{R}$, yielding the claim.
\end{proof}

\begin{proof}[Proof of Theorem \ref{theo:form_of_the_limiting_cov}]
    Under our assumptions, Lemma \ref{lem:auxiliary_1} implies that
    \begin{align}\label{eq:lemma_1_implication}
        \sqrt{n} \left\{ \frac{\hat{\boldsymbol{\theta}}(\textbf{x}_i)}{\| \hat{\boldsymbol{\theta}}(\textbf{x}_i) \|} - \frac{\boldsymbol{\theta}}{\| \boldsymbol{\theta} \|} \right\}
        = \frac{\| \textbf{m} \|}{\| \boldsymbol{\theta} \|} \textbf{Q}_{\boldsymbol{\theta}} \boldsymbol{\Sigma}^{-1/2} \sqrt{n} \left\{ \frac{\hat{\boldsymbol{\theta}}( \textbf{z}_i)}{\| \hat{\boldsymbol{\theta}}( \textbf{z}_i) \|} - \frac{\textbf{m}}{\| \textbf{m} \|} \right\} + o_P(1),
    \end{align}
    where $\textbf{m}$ and $\textbf{z}_i$ are as in Lemma~\ref{lem:auxiliary_1}. Now, the distribution of $\textbf{z}_i$ is invariant to all orthogonal transformations $\textbf{O} \textbf{z}_i$ where $\textbf{O}$ is such that $\textbf{O} \textbf{m} = \textbf{m}$. Lemma \ref{lem:auxiliary_2} then \revision{states} that the limiting covariance matrix of
    \begin{align*}
        \sqrt{n} \left\{ \frac{\hat{\boldsymbol{\theta}}( \textbf{z}_i)}{\| \hat{\boldsymbol{\theta}}( \textbf{z}_i) \|} - \frac{\textbf{m}}{\| \textbf{m} \|} \right\}
    \end{align*}
    has the form
    \begin{align*}
        a \frac{\textbf{m} \textbf{m}\trans }{\| \textbf{m} \|^2} + C \left( \textbf{I}_p - \frac{\textbf{m} \textbf{m}\trans }{\| \textbf{m} \|^2} \right),
    \end{align*}
    for some $a, C \in \mathbb{R}$. Hence the limiting covariance matrix of \eqref{eq:lemma_1_implication} is
    \begin{align*}
        & \frac{\| \textbf{m} \|^2}{\| \boldsymbol{\theta} \|^2} \textbf{Q}_{\boldsymbol{\theta}} \boldsymbol{\Sigma}^{-1/2} \left\{ a \frac{\textbf{m} \textbf{m}\trans }{\| \textbf{m} \|^2} + C \left( \textbf{I}_p - \frac{\textbf{m} \textbf{m}\trans }{\| \textbf{m} \|^2} \right) \right\}  \boldsymbol{\Sigma}^{-1/2} \textbf{Q}_{\boldsymbol{\theta}} = C \frac{\| \textbf{m} \|^2}{\| \boldsymbol{\theta} \|^2} \textbf{Q}_{\boldsymbol{\theta}} \boldsymbol{\Sigma}^{-1} \textbf{Q}_{\boldsymbol{\theta}},
    \end{align*}
    concluding the proof.
\end{proof}

For the proof of Theorem \ref{theo:shortcut}, we need the following lemma.

\begin{lemma}\label{lem:kronecker_sum_inverse}
For any $\alpha\geq 0$  and $\textbf{u}\in\mathbb{R}^p$, $\|\textbf{u}\|=1$, 
$$
\left[\left\{\textbf{I}_p\otimes(\textbf{I}_p+\alpha\textbf{u}\textbf{u}\trans )\right\}+\left\{(\textbf{I}_p+\alpha\textbf{u}\textbf{u}\trans )\otimes\textbf{I}_p\right\}\right]^{-1}
=\frac{1}{2(\alpha+2)}\left[\textbf{I}_p\otimes\textbf{I}_p+(\alpha+1)\left\{\left(\textbf{I}_p-\frac{\alpha}{\alpha+1}\textbf{u}\textbf{u}\trans \right)\otimes\left(\textbf{I}_p-\frac{\alpha}{\alpha+1}\textbf{u}\textbf{u}\trans \right)\right\}\right].
$$
\end{lemma}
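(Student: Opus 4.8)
The plan is to diagonalize both sides simultaneously by exploiting the spectral structure of the rank-one perturbation $\textbf{I}_p + \alpha \textbf{u}\textbf{u}\trans$. First I would introduce the complementary orthogonal projections $\textbf{P} := \textbf{u}\textbf{u}\trans$ and $\textbf{Q} := \textbf{I}_p - \textbf{u}\textbf{u}\trans$, which satisfy $\textbf{P}^2 = \textbf{P}$, $\textbf{Q}^2 = \textbf{Q}$, $\textbf{P}\textbf{Q} = \textbf{Q}\textbf{P} = \textbf{0}$ and $\textbf{P} + \textbf{Q} = \textbf{I}_p$, the idempotency relying on $\|\textbf{u}\| = 1$. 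In this notation the perturbed matrix factors neatly as $\textbf{I}_p + \alpha \textbf{u}\textbf{u}\trans = (1 + \alpha)\textbf{P} + \textbf{Q}$.

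Next, using the mixed-product rule $(\textbf{A}\otimes\textbf{B})(\textbf{C}\otimes\textbf{D}) = (\textbf{A}\textbf{C})\otimes(\textbf{B}\textbf{D})$, I would observe that the four matrices $\textbf{P}\otimes\textbf{P}$, $\textbf{P}\otimes\textbf{Q}$, $\textbf{Q}\otimes\textbf{P}$, $\textbf{Q}\otimes\textbf{Q}$ form a family of mutually orthogonal idempotents whose sum is $\textbf{I}_p \otimes \textbf{I}_p$. Expanding $\textbf{I}_p\otimes\{(1+\alpha)\textbf{P}+\textbf{Q}\} + \{(1+\alpha)\textbf{P}+\textbf{Q}\}\otimes\textbf{I}_p$ in this basis yields a ``diagonal'' representation with coefficients $2(1+\alpha)$ on $\textbf{P}\otimes\textbf{P}$, $2+\alpha$ on each of $\textbf{P}\otimes\textbf{Q}$ and $\textbf{Q}\otimes\textbf{P}$, and $2$ on $\textbf{Q}\otimes\textbf{Q}$. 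Since the blocks are orthogonal projections summing to the identity, the inverse is obtained simply by replacing each coefficient by its reciprocal, giving $\{2(1+\alpha)\}^{-1}$, $(2+\alpha)^{-1}$, $(2+\alpha)^{-1}$ and $1/2$ on the respective blocks.

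It then remains to check that the claimed right-hand side collapses to this same block expansion. Rewriting $\textbf{I}_p - \{\alpha/(\alpha+1)\}\textbf{u}\textbf{u}\trans = \{1/(\alpha+1)\}\textbf{P} + \textbf{Q}$ and expanding its tensor square produces a linear combination of the same four projections; combining that with $\textbf{I}_p\otimes\textbf{I}_p$ and multiplying through by $1/\{2(\alpha+2)\}$ should reproduce exactly the four reciprocal coefficients above. This final step is purely a matching of scalars block by block, so there is no genuine obstacle here; the only point requiring care is the bookkeeping of the four coefficients. If one prefers to sidestep computing the inverse independently, an equivalent and equally routine route is to multiply the matrix $\textbf{I}_p\otimes(\textbf{I}_p + \alpha\textbf{u}\textbf{u}\trans) + (\textbf{I}_p + \alpha\textbf{u}\textbf{u}\trans)\otimes\textbf{I}_p$ directly by the stated right-hand side and verify, block by block, that the product equals $\textbf{I}_p \otimes \textbf{I}_p$.
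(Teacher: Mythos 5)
Your proof is correct, and it takes a genuinely different route from the paper's. The paper verifies the identity by brute force: it notes via Sherman--Morrison that $\textbf{I}_p-\{\alpha/(\alpha+1)\}\textbf{u}\textbf{u}\trans=(\textbf{I}_p+\alpha\textbf{u}\textbf{u}\trans)^{-1}$, then multiplies the left-hand matrix by the claimed inverse, expands the four cross terms using the mixed-product rule, and watches everything cancel to $2(\alpha+2)\textbf{I}_{p^2}$ --- essentially the ``equally routine route'' you mention in your closing sentence. Your primary argument instead diagonalizes the Kronecker sum in the orthogonal idempotent basis $\textbf{P}\otimes\textbf{P}$, $\textbf{P}\otimes\textbf{Q}$, $\textbf{Q}\otimes\textbf{P}$, $\textbf{Q}\otimes\textbf{Q}$, reads off the eigenvalues $2(1+\alpha)$, $2+\alpha$, $2+\alpha$, $2$, and inverts by taking reciprocals; I checked the block coefficients of the stated right-hand side, namely $\{2(\alpha+1)\}^{-1}$, $(\alpha+2)^{-1}$, $(\alpha+2)^{-1}$, $1/2$ after dividing by $2(\alpha+2)$, and they match. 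What your approach buys is that it \emph{derives} the inverse rather than merely verifying a formula handed to you, it makes the spectral structure (and hence positive definiteness and the generalization to $\textbf{I}_p+\alpha\textbf{u}\textbf{u}\trans$ replaced by any matrix with known spectral decomposition) transparent, and it localizes the only real work to a four-scalar bookkeeping exercise. The paper's verification is shorter to write down but gives no insight into where the formula comes from. The one thing you should do before calling the proof complete is actually record the four coefficients of the right-hand side, since that matching is the entire content of the final step; as outlined, it goes through without obstruction.
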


\begin{proof}[Proof of Lemma \ref{lem:kronecker_sum_inverse}]
The statement is proven by direct multiplication. The Sherman-Morrison formula implies that $\displaystyle\textbf{I}_p-\{\alpha/(\alpha+1)\} \textbf{u}\textbf{u}\trans =(\textbf{I}_p+{\alpha}\textbf{u}\textbf{u}\trans )^{-1}$. If we denote now
$$
\textbf{A}=\left\{\textbf{I}_p\otimes \left( \textbf{I}_p+\alpha\textbf{u}\textbf{u}\trans \right)\right\}+\left\{ \left( \textbf{I}_p+\alpha\textbf{u}\textbf{u}\trans  \right)\otimes\textbf{I}_p\right\},
$$
$$
\textbf{B}=\frac{1}{2(\alpha+2)}\left[\textbf{I}_p\otimes\textbf{I}_p+(\alpha+1)\left\{\left(\textbf{I}_p-\frac{\alpha}{\alpha+1}\textbf{u}\textbf{u}\trans \right)\otimes \left( \textbf{I}_p-\frac{\alpha}{\alpha+1}\textbf{u}\textbf{u}\trans  \right) \right\}\right],
$$
then 
\begin{align*}
2(\alpha+2)\textbf{A}\textbf{B}&=\textbf{I}_p\otimes (\textbf{I}_p+\alpha\textbf{u}\textbf{u}\trans )+(\alpha + 1) \left\{ \left(\textbf{I}_p-\frac{\alpha}{\alpha+1}\textbf{u}\textbf{u}\trans \right)\otimes\textbf{I}_p \right\} +(\textbf{I}_p+\alpha\textbf{u}\textbf{u}\trans )\otimes\textbf{I}_p+(\alpha + 1)\left\{\textbf{I}_p\otimes \left( \textbf{I}_p-\frac{\alpha}{\alpha + 1}\textbf{u}\textbf{u}\trans  \right) \right\}\\
&= \textbf{I}_{p^2} +\alpha\textbf{I}_p\otimes(\textbf{u}\textbf{u}\trans )+(\alpha + 1) \textbf{I}_{p^2} -\alpha(\textbf{u}\textbf{u}\trans )\otimes\textbf{I}_p + \textbf{I}_{p^2}+\alpha(\textbf{u}\textbf{u}\trans )\otimes\textbf{I}_p+(\alpha+1)\textbf{I}_{p^2}-\alpha\textbf{I}_p\otimes(\textbf{u}\textbf{u}\trans )\\
&=2(\alpha + 2)\textbf{I}_{p^2}.
\end{align*}
Thus, $\textbf{A}\textbf{B}=\textbf{I}_p\otimes\textbf{I}_p=\textbf{I}_{p^2}$. As $\textbf{A}$ and $\textbf{B}$ are square matrices, every left inverse is also a right inverse, concluding the proof. 
\end{proof}

\begin{proof}[Proof of Theorem \ref{theo:shortcut}]
    We have $\textbf{C}_2(\textbf{z}) = \textbf{I}_p + \beta \| \textbf{m} \|^2 \textbf{w} \textbf{w}\trans $ where $\textbf{w} := \textbf{m}/\|\textbf{m}\|$. Denoting $d = (1 + \beta \| \textbf{m} \|^2)^{-1} \in (0, 1)$, we then have $\textbf{C}_2(\textbf{z})^{-1} = \textbf{I}_p - (1 - d) \textbf{w} \textbf{w}\trans $ and $\textbf{C}_2(\textbf{z})^{-1/2} = \textbf{I}_p - (1 - d^{1/2}) \textbf{w} \textbf{w}\trans $. Consequently, dropping the parenthetical references to the data, $\| \hat{\boldsymbol{\theta}}(\textbf{z}_i) \|^2 = \hat{\textbf{u}} \trans  \hat{\textbf{C}_2}{}^{-1} \hat{\textbf{u}} \rightarrow_p d$.
    Let now $\textbf{t} \in \mathbb{R}^p$ be any unit-length vector satisfying $\textbf{m}\trans  \textbf{t} = 0$. Then, by the proof of Theorem \ref{theo:form_of_the_limiting_cov}, we have
    \begin{align*}
        \textbf{t}\trans  \sqrt{n} \left\{ \frac{\hat{\boldsymbol{\theta}}( \textbf{z}_i)}{\| \hat{\boldsymbol{\theta}}( \textbf{z}_i) \|} - \frac{\textbf{m}}{\| \textbf{m} \|} \right\} \rightsquigarrow \mathcal{N}(0, C).
    \end{align*}
    This further gives $\textbf{t}\trans  \sqrt{n}\{ \hat{\boldsymbol{\theta}}( \textbf{z}_i) - d^{1/2} \textbf{w} \} \rightsquigarrow \mathcal{N}(0, d C)$. The left-hand side of this can be expanded, using Slutsky's lemma, to obtain
    \begin{align}\label{eq:linearization_for_t_theta}
        \textbf{t}\trans  \sqrt{n} (\hat{\textbf{C}_2}{}^{-1/2} - \textbf{C}_2{}^{-1/2}) \textbf{w} + \textbf{t}\trans  \sqrt{n} (\hat{\textbf{u}} - \textbf{w}) + o_P(1).
    \end{align}
    Linearizing the equation $\sqrt{n}(\hat{\textbf{C}_2}{}^{-1/2} \hat{\textbf{C}_2} \hat{\textbf{C}_2}{}^{-1/2} - \textbf{I}_p) = 0$ and using the formula $\mathrm{vec}(\textbf{A} \textbf{X} \textbf{B}\trans ) = (\textbf{B} \otimes \textbf{A}) \mathrm{vec}(\textbf{X})$, we get
    \begin{align*}
        & \sqrt{n} \mathrm{vec} (\hat{\textbf{C}_2}{}^{-1/2} - \textbf{C}_2{}^{-1/2}) = -\{ (\textbf{C}_2^{1/2} \otimes \textbf{I}_p) + (\textbf{I}_p \otimes \textbf{C}_2^{1/2}) \}^{-1} (\textbf{C}_2^{-1/2} \otimes \textbf{C}_2^{-1/2} ) \sqrt{n} \mathrm{vec}( \hat{\textbf{C}}_2 - \textbf{C}_2) + o_P(1).
    \end{align*}
    Now, $\textbf{C}_2^{1/2} = \textbf{I}_d + (d^{-1/2} - 1) \textbf{w} \textbf{w}\trans $ and, using Lemma \ref{lem:kronecker_sum_inverse}, we get
    \begin{align*}
        \textbf{t}\trans  \sqrt{n} (\hat{\textbf{C}_2}{}^{-1/2} - \textbf{C}_2{}^{-1/2}) \textbf{w} = - \frac{d^{1/2}}{1 + d^{-1/2}} \textbf{t}\trans  \sqrt{n} (\hat{\textbf{C}}_2 - \textbf{C}_2) \textbf{w} + o_P(1).
    \end{align*}
    Plugging in to \eqref{eq:linearization_for_t_theta} now yields the claim.
\end{proof}

\begin{proof}[Proof of Theorem \ref{theo:whitening}]
    Let $\textbf{R} := \textbf{C}_2^{-1/2}(\textbf{x}) \boldsymbol{\Sigma}^{1/2}$. Then $\textbf{R}\trans  \textbf{R} = \textbf{I}_p - \{ (\beta \tau)/(1 + \beta \tau) \} \textbf{w} \textbf{w}\trans $ where $\textbf{w} = \boldsymbol{\Sigma}^{-1/2} \textbf{h}/ \| \boldsymbol{\Sigma}^{-1/2} \textbf{h} \|$. Consequently, the unique symmetric positive definite square root of $\textbf{R}\trans  \textbf{R}$ is $\textbf{P} := \textbf{I}_p - \{1 - (1 + \beta \tau)^{-1/2} \} \textbf{w} \textbf{w}\trans $. Consequently $\textbf{R} = \textbf{O} \textbf{P}$ for some orthogonal matrix $\textbf{O}$. This implies that $\textbf{C}_2^{-1/2}(\textbf{x}) \{ \textbf{x} - \mathrm{E}(\textbf{x})$ can be written as
    \begin{align*}
        \textbf{C}_2^{-1/2}(\textbf{x}) \{ \textbf{x} - \mathrm{E}(\textbf{x}) \}= \textbf{R} \boldsymbol{\Sigma}^{-1/2} \textbf{x} = \textbf{O} \textbf{z},
    \end{align*}
    where $\textbf{z}$ is as described in the theorem statement.
\end{proof}

\subsection{Proofs of the results in Section \ref{sec:estimator_1}}

\begin{proof}[Proof of Lemma \ref{lem:ae_method_of_moments}]
Let $\textbf{A}$ and $\textbf{b}$ be an invertible $p \times p$ -matrix and $p$-vector, respectively. Then $$
\boldsymbol{\theta}(\textbf{A}\trans \textbf{x}+\textbf{b})=\textbf{C}_2^{-1/2}(\textbf{A}\trans \textbf{x} + \textbf{b})\textbf{c}_3(\textbf{C}_2^{-1/2}(\textbf{A}\trans \textbf{x} + \textbf{b})\textbf{A}\trans \{ \textbf{x} - \mathrm{E}(\textbf{x}) \} ).
$$
As shown in the proof of Lemma~\ref{lem:tobi_ae}, $\textbf{C}_2^{-1/2}(\textbf{A}\trans \textbf{x} + \textbf{b})=\textbf{U}\textbf{C}_2^{-1/2}(\textbf{x})(\textbf{A}^{-1})\trans $, for some orthogonal matrix $\textbf{U}$ that makes $\textbf{C}_2^{-1/2}(\textbf{A}\trans \textbf{x} + \textbf{b})$ symmetric. Therefore, $(\textbf{A}\trans \textbf{x} + \textbf{b})_{\mathrm{w}}=\textbf{U}\textbf{x}_{\mathrm{w}}$. Furthermore, it is straightforward to show that for any orthogonal matrix $\textbf{O}$, $\textbf{c}_3(\textbf{O}\textbf{x})=\textbf{O}\textbf{c}_3(\textbf{x})$. And finally, since $\textbf{U}$ is such that $\textbf{C}_2^{-1/2}(\textbf{A}\trans \textbf{x} + \textbf{b})$ is symmetric, we can write $\textbf{C}_2^{-1/2}(\textbf{A}\trans \textbf{x} + \textbf{b})=\textbf{A}^{-1}\textbf{C}_2^{-1/2}(\textbf{x})\textbf{U}\trans $, which finally gives
\begin{align*}
    \boldsymbol{\theta}(\textbf{A}\trans \textbf{x} + \textbf{b}) =& \textbf{C}_2^{-1/2}(\textbf{A}\trans \textbf{x} + \textbf{b})\textbf{c}_3((\textbf{A}\trans \textbf{x} + \textbf{b})_\mathrm{w}) = \textbf{A}^{-1}\textbf{C}_2^{-1/2}(\textbf{x})\textbf{c}_3(\textbf{x}_\mathrm{w})=\textbf{A}^{-1}\boldsymbol{\theta}(\textbf{x}).
\end{align*}

\end{proof}

\begin{proof}[Proof of Lemma \ref{lem:fc_method_of_moments}]
    By AE, it is sufficient to assume that
    \begin{align*}
        \textbf{z} \sim \alpha_1 \mathcal{N}_p(- \alpha_2 \textbf{m}, \textbf{I}_p) + \alpha_2 \mathcal{N}_p(\alpha_1 \textbf{m}, \textbf{I}_p),
    \end{align*}
    and show that $\boldsymbol{\theta}_{R}(\textbf{z})$ is proportional to $\textbf{m} := \boldsymbol{\Sigma}^{-1/2} \textbf{h}$. Now, by the proof of Theorem \ref{theo:shortcut}, we have $\textbf{C}_2(\textbf{z})^{-1/2} = \textbf{I} - (1 - d^{1/2}) \textbf{w} \textbf{w}\trans $ where $d = (1 + \beta \| \textbf{m} \|^2)^{-1}$ and $\textbf{w} := \textbf{m}/\| \textbf{m} \|$. Hence,
    \begin{align*}
        \textbf{z}_w \sim \alpha_1 \mathcal{N}_p(- \alpha_2 d^{1/2} \textbf{m}, \textbf{C}_2(\textbf{z})^{-1}) + \alpha_2 \mathcal{N}_p(\alpha_1 d^{1/2} \textbf{m}, \textbf{C}_2(\textbf{z})^{-1}),
    \end{align*}
    and, by Lemma \ref{lem:moment_gathering_lemma}, we have $\textbf{c}_3(\textbf{z}_w) = \beta \gamma d^{3/2} \textbf{m} \textbf{m}\trans  \textbf{m}$.
    The claim now follows, after recalling that $\tau = \textbf{h}\trans  \boldsymbol{\Sigma}^{-1} \textbf{h} = \| \textbf{m} \|^2$.
\end{proof}

Throughout the remainder of this section, without loss of generality, we assume $\textbf{x}\sim\alpha_1\mathcal{N}(-\alpha_2\textbf{m},\textbf{I}_p)+\alpha_2\mathcal{N}(\alpha_1\textbf{m},\textbf{I}_p)$, where $\textbf{m} := \boldsymbol{\Sigma}^{-1/2} \textbf{h}$. For simplicity of the notation, we denote $\textbf{A} := \textbf{C}_2{}^{-1/2}(\textbf{x})$ and $\hat{\textbf{A}} := \hat{\textbf{C}}_2{}^{-1/2}(\textbf{x}_i):=\left(\frac{1}{n}\sum_{i=1}^n\tilde{\textbf{x}}_i\tilde{\textbf{x}}_i\trans \right){}^{-1/2}$, where $\tilde{\textbf{x}}_i=\textbf{x}_i-\bar{\textbf{x}}$. We also use the notation $\Delta^2 = (1 + \beta \tau)^{-1} $ where we recall that $\tau = \| \textbf{m} \|^2$.

\begin{lemma}\label{lemma:lemma2}
Under the above assumptions $\mathrm{E}(\textbf{x}\textbf{x}\trans \textbf{A}^2\textbf{x})=\beta\gamma\Delta^2\tau\textbf{m}$.
\end{lemma}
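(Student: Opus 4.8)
The plan is to reduce the computation to the third-moment tensor already recorded in Lemma \ref{lem:moment_gathering_lemma} and then exploit the eigenstructure of $\textbf{C}_2(\textbf{x})$. The first thing I would stress is that in the present (standardized) setting $\textbf{A} = \textbf{C}_2{}^{-1/2}(\textbf{x})$ is the \emph{population} whitening matrix and hence a deterministic symmetric matrix, so $\textbf{A}^2 = \textbf{C}_2^{-1}(\textbf{x})$ may be freely taken in and out of the expectation. Writing the scalar $\textbf{x}\trans \textbf{A}^2 \textbf{x} = \mathrm{vec}(\textbf{A}^2)\trans (\textbf{x} \otimes \textbf{x})$ and pulling it to the right, I would convert the quantity of interest into a single contraction of the matricized third moment, $\mathrm{E}(\textbf{x}\textbf{x}\trans \textbf{A}^2 \textbf{x}) = \mathrm{E}\{ \textbf{x} (\textbf{x} \otimes \textbf{x})\trans \} \, \mathrm{vec}(\textbf{A}^2)$.

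Next I would invoke Lemma \ref{lem:moment_gathering_lemma}, specialized to $\boldsymbol{\Sigma} = \textbf{I}_p$ and $\textbf{h} = \textbf{m}$. Since $\mathrm{E}(\textbf{x}) = \textbf{0}$ here, that lemma gives $\mathrm{E}\{ \textbf{x} (\textbf{x} \otimes \textbf{x})\trans \} = \mathrm{Cov}(\textbf{x}, \textbf{x} \otimes \textbf{x}) = \beta \gamma \, \textbf{m} (\textbf{m} \otimes \textbf{m})\trans $. Substituting this and using the identity $(\textbf{m} \otimes \textbf{m})\trans \mathrm{vec}(\textbf{A}^2) = \mathrm{tr}(\textbf{m}\textbf{m}\trans \textbf{A}^2) = \textbf{m}\trans \textbf{A}^2 \textbf{m}$, I would arrive at the compact form $\mathrm{E}(\textbf{x}\textbf{x}\trans \textbf{A}^2 \textbf{x}) = \beta \gamma (\textbf{m}\trans \textbf{A}^2 \textbf{m}) \, \textbf{m}$, which already exhibits the correct direction $\textbf{m}$.

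It then remains only to evaluate the scalar $\textbf{m}\trans \textbf{A}^2 \textbf{m} = \textbf{m}\trans \textbf{C}_2^{-1}(\textbf{x}) \textbf{m}$. Here I would note that $\textbf{C}_2(\textbf{x}) = \textbf{I}_p + \beta \textbf{m}\textbf{m}\trans $, so $\textbf{m}$ is an eigenvector, $\textbf{C}_2(\textbf{x}) \textbf{m} = (1 + \beta \tau) \textbf{m}$, and therefore $\textbf{C}_2^{-1}(\textbf{x}) \textbf{m} = \Delta^2 \textbf{m}$ with $\Delta^2 = (1 + \beta \tau)^{-1}$. Consequently $\textbf{m}\trans \textbf{A}^2 \textbf{m} = \Delta^2 \| \textbf{m} \|^2 = \Delta^2 \tau$, and plugging this back in yields $\mathrm{E}(\textbf{x}\textbf{x}\trans \textbf{A}^2 \textbf{x}) = \beta \gamma \Delta^2 \tau \, \textbf{m}$, exactly as claimed.

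I do not anticipate a genuine obstacle: once the third-moment tensor from Lemma \ref{lem:moment_gathering_lemma} is in hand, the result is essentially a one-line contraction. The only points requiring mild care are recognizing that $\textbf{A}$ (as opposed to $\hat{\textbf{A}}$) is deterministic and may be extracted from the expectation, and keeping the bookkeeping consistent between the $\mathrm{vec}$/Kronecker representation of the third moment and the final matrix-form answer.
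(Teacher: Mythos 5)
Your proof is correct, and it reaches the claim by a slightly different route than the paper. The paper's own proof applies the multivariate Stein identity directly to $g(\textbf{x})=\textbf{x}\trans\textbf{A}^2\textbf{x}$ together with the convex-combination structure of mixture moments, and then closes with the same scalar observation $\textbf{m}\trans\textbf{A}^2\textbf{m}=\Delta^2\tau$. You instead bypass any fresh Stein computation by rewriting $\mathrm{E}(\textbf{x}\textbf{x}\trans\textbf{A}^2\textbf{x})=\mathrm{E}\{\textbf{x}(\textbf{x}\otimes\textbf{x})\trans\}\mathrm{vec}(\textbf{A}^2)$ and contracting the third-moment matrix $\mathrm{Cov}(\textbf{x},\textbf{x}\otimes\textbf{x})=\beta\gamma\,\textbf{m}(\textbf{m}\otimes\textbf{m})\trans$ already recorded in Lemma~\ref{lem:moment_gathering_lemma} (valid here since $\mathrm{E}(\textbf{x})=\textbf{0}$, so the covariance equals the raw cross-moment). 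Since that entry of Lemma~\ref{lem:moment_gathering_lemma} was itself derived via Stein's identity, the two arguments rest on the same underlying computation; what your version buys is economy (no re-derivation, just a one-line contraction plus the eigenrelation $\textbf{C}_2^{-1}\textbf{m}=\Delta^2\textbf{m}$), at the cost of depending on the correctness and applicability of the earlier lemma in the standardized setting $\boldsymbol{\Sigma}=\textbf{I}_p$, $\textbf{h}=\textbf{m}$ --- a specialization you correctly flag and justify. All intermediate identities (the $\mathrm{vec}$/Kronecker rewriting, the trace contraction, and $\textbf{m}\trans\textbf{A}^2\textbf{m}=\Delta^2\tau$) check out, and your remark that $\textbf{A}$ is the deterministic population whitening matrix, hence removable from the expectation, is exactly the right point of care.
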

\begin{proof}[Proof of Lemma \ref{lemma:lemma2}]
To prove the statement we use the fact that the moments of the multivariate normal mixture are convex combinations of the moments of its normal components, as well as the multivariate Stein's lemma \cite{liu1994siegel} with $g(\textbf{x})=\textbf{x}\trans \textbf{A}^2\textbf{x}$. By further observing that $\mathrm{tr}(\textbf{A}^2\textbf{m}\textbf{m}\trans )=\textbf{m}\trans \textbf{A}^2\textbf{m}=\Delta^2 \tau$, we obtain the claim. 
\end{proof}

\begin{proof}[Proof of Theorem \ref{theo:limiting_st_moment}]

We start by the linearization of $\sqrt{n}(\hat{\boldsymbol{\theta}}-\boldsymbol{\theta})$;
\begin{align}\label{eq:eq2}
\begin{split}
\sqrt{n}(\hat{\boldsymbol{\theta}}-\boldsymbol{\theta})&=\sqrt{n}(\hat{\textbf{A}}\hat{\textbf{c}}_3 - {\textbf{A}}{\textbf{c}}_3) =\sqrt{n}(\hat{\textbf{A}}-\textbf{A})\hat{\textbf{c}}_3 + \textbf{A}\sqrt{n}(\hat{\textbf{c}}_3 - \textbf{c}_3) = \sqrt{n}(\hat{\textbf{A}}-\textbf{A}){\textbf{c}}_3 + \textbf{A}\sqrt{n}(\hat{\textbf{c}}_3 -\textbf{c}_3 )+o_P(1).
\end{split}
\end{align}
Since $\textbf{c}_3=\textbf{A}\mathrm{E}(\textbf{x}\textbf{x}\trans \textbf{A}^2\textbf{x})$, Lemma~\ref{lemma:lemma2} implies that $\textbf{c}_3=\beta\gamma\Delta^3\tau\textbf{m}$. Thus,
\begin{align}\label{eq:eq3}
\sqrt{n}(\hat{\boldsymbol{\theta}}-\boldsymbol{\theta})&=\beta\gamma\Delta^3\tau\sqrt{n}(\hat{\textbf{A}}\textbf{m}-\textbf{A}\textbf{m})+\textbf{A}\sqrt{n}(\hat{\textbf{c}}_3 - \textbf{c}_3)+o_P(1).
\end{align}
Denote now I $\, =\beta\gamma\Delta^3\tau\sqrt{n}(\hat{\textbf{A}}\textbf{m}-\textbf{A}\textbf{m})$ and II$\, =\textbf{A}\sqrt{n}(\hat{\textbf{c}}_3 - \textbf{c}_3)$, and let us focus first on the expression II. We again begin by linearization, this time of $\hat{\textbf{c}}_3$:
\begin{align}\label{eq:eq4}
\begin{split}
    \sqrt{n}(\hat{\textbf{c}}_3-\textbf{c}_3)&=\sqrt{n}\left(\frac{1}{n}\sum_{i=1}^n\hat{\textbf{A}}\tilde{\textbf{x}_i}\tilde{\textbf{x}_i}\trans \hat{\textbf{A}}\trans \hat{\textbf{A}}\tilde{\textbf{x}_i}-\textbf{c}_3\right)\\
    &=\sqrt{n}(\hat{\textbf{A}}-\textbf{A})\frac{1}{n}\sum_{i=1}^n\tilde{\textbf{x}}_i\tilde{\textbf{x}}_i\trans \hat{\textbf{A}}{}^2\tilde{\textbf{x}}_i + \textbf{A}\frac{1}{n}\sum_{i=1}^n\tilde{\textbf{x}}_i\tilde{\textbf{x}}_i\trans \sqrt{n} (\hat{\textbf{A}}{}^2-\textbf{A}^2)\tilde{\textbf{x}}_i + \textbf{A} \sqrt{n}\left(\frac{1}{n}\sum_{i=1}^n\tilde{\textbf{x}}_i\tilde{\textbf{x}}_i\trans \textbf{A}^2\tilde{\textbf{x}}_i-\textbf{c}_3\right).
    \end{split}
\end{align}
We examine now all three expressions in~\eqref{eq:eq4} separately.
\begin{align}\label{eq:eq 4.1}
\sqrt{n}(\hat{\textbf{A}}-\textbf{A})\frac{1}{n}\sum_{i=1}^n\tilde{\textbf{x}}_i\tilde{\textbf{x}}_i\trans \hat{\textbf{A}}{}^2\tilde{\textbf{x}}_i&=\sqrt{n}(\hat{\textbf{A}}-\textbf{A})\mathrm{E}\left(\textbf{x}\textbf{x}\trans \textbf{A}^2\textbf{x}\right)+o_P(1) = \beta\gamma\Delta^2\tau\sqrt{n}(\hat{\textbf{A}}\textbf{m}-\textbf{A}\textbf{m})+o_P(1).
\end{align}
Furthermore, 
\begin{align*}
\textbf{A} \frac{1}{n}\sum_{i=1}^n\tilde{\textbf{x}}_i\tilde{\textbf{x}}_i\trans \sqrt{n}(\hat{\textbf{A}}{}^2-\textbf{A}^2)\tilde{\textbf{x}}_i &= \textbf{A} \frac{1}{n}\sum_{i=1}^n\tilde{\textbf{x}}_i\mathrm{vec}\left\{\tilde{\textbf{x}}_i\trans \sqrt{n}(\hat{\textbf{A}}{}^2-\textbf{A}^2)\tilde{\textbf{x}}_i\right\} = \textbf{A} \mathrm{E}\{ \textbf{x}(\textbf{x}\otimes\textbf{x})\trans \} \sqrt{n}\mathrm{vec}\left(\hat{\textbf{A}}{}^2-\textbf{A}^2\right)+o_P(1)\\
&=\beta\gamma \textbf{A} \textbf{m}(\textbf{m}\otimes\textbf{m})\sqrt{n}\mathrm{vec}\left(\hat{\textbf{A}}{}^2-\textbf{A}^2\right)+o_P(1),
\end{align*}
which turns out to be a negligible term in the expression for the limiting covariance of the estimator standardized to unit norm as $\textbf{A}\textbf{m}\propto\textbf{m}$ and since the final linearization will in the end by multiplied from the left by $\textbf{Q}_{\boldsymbol{\theta}} \boldsymbol{\Sigma}^{-1/2}$, giving $\textbf{Q}_{\boldsymbol{\theta}} \boldsymbol{\Sigma}^{-1/2} \textbf{m} = \textbf{0}$. Thus, we ignore this term in the sequel. Finally, we focus on a final term in~\eqref{eq:eq4}.
\begin{align}\label{eq:eq 4.3}
\textbf{A}\sqrt{n}\left(\frac{1}{n}\sum_{i=1}^n\tilde{\textbf{x}}_i\tilde{\textbf{x}}_i\trans \textbf{A}^2\tilde{\textbf{x}}_i-\textbf{c}_3\right)=&\sqrt{n}(\hat{\textbf{c}}_{0,3}-\textbf{c}_3) -\textbf{A}\sqrt{n}\bar{\textbf{x}}\frac{1}{n}\sum_{i=1}^n\tilde{\textbf{x}}_i\trans \textbf{A}^2\tilde{\textbf{x}}_i -\textbf{A}\sqrt{n}\frac{1}{n}\sum_{i=1}^n{\textbf{x}}_i\bar{\textbf{x}}\trans \textbf{A}^2\tilde{\textbf{x}}_i - \textbf{A}\sqrt{n}\frac{1}{n}\sum_{i=1}^n{\textbf{x}}_i{\textbf{x}}_i\trans \textbf{A}^2\bar{\textbf{x}}\nonumber\\
&~+ o_P(1),
\end{align}
where $\hat{\textbf{c}}_{0,3} := \frac{1}{n}\sum_{i=1}^n\textbf{A}\textbf{x}_i \textbf{x}_i\trans \textbf{A}^2\textbf{x}_i$. Since, $\sqrt{n}\bar{\textbf{x}}=O_P(1)$ and $\bar{\textbf{x}}=o_P(1)$, 
$$
\textbf{A}\sqrt{n}\bar{\textbf{x}}\frac{1}{n}\sum_{i=1}^n\tilde{\textbf{x}}_i\trans \textbf{A}^2\tilde{\textbf{x}}_i=\textbf{A}\sqrt{n}\bar{\textbf{x}}\mathrm{E}(\textbf{x}\trans \textbf{A}^2\textbf{x})+o_P(1)=p\textbf{A}\sqrt{n}\bar{\textbf{x}},
$$
since $\textbf{A}\textbf{x}$ is standardized to have a unit covariance matrix. Furthermore, 
$$
\textbf{A}\sqrt{n}\frac{1}{n}\sum_{i=1}^n{\textbf{x}}_i{\textbf{x}}_i\trans \textbf{A}^2\bar{\textbf{x}}=\textbf{A}\sqrt{n}\frac{1}{n}\sum_{i=1}^n{\textbf{x}}_i\bar{\textbf{x}}\trans \textbf{A}^2{\textbf{x}}_i=\textbf{A}\mathrm{E}(\textbf{x}\textbf{x}\trans )\textbf{A}^2\sqrt{n}\bar{\textbf{x}}+o_P(1).
$$
Observe first that $\mathrm{E}(\textbf{x}\textbf{x}\trans )=\textbf{I}_p+\beta\textbf{m}\textbf{m}\trans $. As above, since we are interested in the limiting covariance of the normalized estimator $\hat{\boldsymbol{\theta}}/\|\hat{\boldsymbol{\theta}}\|$, we can neglect any term starting with $\textbf{m}$, or $\textbf{A}\textbf{m}$ for that matter, as any such term will vanish in the end. For the same reason, multiplication with matrix $\textbf{A}$ from the very left, has the same impact to the final limiting covariance as multiplication by an identity, and can therefore be omitted. Hence, as before, we can write
\begin{equation}
\textbf{A} \sqrt{n}\left(\frac{1}{n}\sum_{i=1}^n\tilde{\textbf{x}}_i\tilde{\textbf{x}}_i\trans \textbf{A}^2\tilde{\textbf{x}}_i-\textbf{c}_3\right) = \sqrt{n}(\hat{\textbf{c}}_{0,3} -\textbf{c}_3)-(p+2)\sqrt{n}\bar{\textbf{x}} + o_P(1).    
\end{equation}
Therefore, we obtain
\begin{align}\label{eq:eq8}
    \sqrt{n}(\hat{\boldsymbol{\theta}}-\boldsymbol{\theta}) =& \beta\gamma\Delta^2(\Delta+1)\tau\sqrt{n}(\hat{\textbf{A}}\textbf{m}-\textbf{A}\textbf{m}) -(p+2)\sqrt{n}\bar{\textbf{x}} +\sqrt{n}(\hat{\textbf{c}}_{0,3} -\textbf{c}_3)+o_P(1).
\end{align}
As for the final part of the linearization, we focus on the first term in the sum in~\eqref{eq:eq8} and express it as a linear function of $\sqrt{n}(\hat{\textbf{C}}_{0,2}-\textbf{C}_2)\textbf{m}$, where $\hat{\textbf{C}}_{0,2}=\frac{1}{n}\sum_{i=1}^n\textbf{x}_i\textbf{x}_i\trans $. As in the proof of Theorem \ref{theo:shortcut},
\begin{align*}
    \sqrt{n}\mathrm{vec}(\hat{\textbf{A}}-\textbf{A})=-(\textbf{I}_p\otimes\textbf{A}^{-1}+\textbf{A}^{-1}\otimes\textbf{I}_p)^{-1}(\textbf{A}\otimes\textbf{A})\sqrt{n}\mathrm{vec}(\hat{\textbf{C}}_{0,2}-\textbf{C}_2) + o_P(1). 
\end{align*}
Now, since $\textbf{C}_2 = \textbf{I}_p + \beta \textbf{m} \textbf{m}\trans $ is a rank-1 perturbation of identity, and $\textbf{C}_2\textbf{m}=\Delta^{-2}\textbf{m}$, we obtain $\textbf{A}^{-1}\textbf{m}=\textbf{C}_2^{1/2}\textbf{m}=\Delta^{-1}\textbf{m}$, giving $\displaystyle\textbf{A}^{-1}=\textbf{I}_p+(\Delta^{-1}-1)(\textbf{m}\textbf{m}\trans )/\tau$. Lemma~\ref{lem:kronecker_sum_inverse} then implies that 
$$
(\textbf{I}_p\otimes\textbf{A}^{-1}+\textbf{A}^{-1}\otimes\textbf{I}_p)^{-1}=\frac{1}{2(\Delta^{-1}+1)}(\textbf{I}_p\otimes\textbf{I}_p+\Delta^{-1}\textbf{A}\otimes\textbf{A}),
$$
further giving
\begin{equation}\label{eq:vecA}
    \sqrt{n}\mathrm{vec}(\hat{\textbf{A}}-\textbf{A})=\frac{-1}{2(\Delta^{-1}+1)}\left\{\textbf{A}\otimes\textbf{A}+\Delta^{-1}(\textbf{A}^2\otimes\textbf{A}^2) \right\}\sqrt{n}\mathrm{vec}(\hat{\textbf{C}}_{0,2}-\textbf{C}_2).
\end{equation}
Unvectorizing~\eqref{eq:vecA} gives
\begin{align*}
    \sqrt{n}(\hat{\textbf{A}}-\textbf{A})=\frac{-1}{2(\Delta^{-1}+1)} \left\{ \textbf{A} \sqrt{n}(\hat{\textbf{C}}_{0,2}-\textbf{C}_2) \textbf{A} + \Delta^{-1}\textbf{A}^2\sqrt{n}(\hat{\textbf{C}}_{0,2}-\textbf{C}_2) \textbf{A}^2 \right\} + o_P(1).
\end{align*}
Same as above, we can ignore the multiplications by $\textbf{A}$ and $\textbf{A}^2$ from the left, and finally obtain
\begin{equation}\label{eq:unvecA_final}
\sqrt{n}(\hat{\textbf{A}}-\textbf{A})\textbf{m} = \frac{-\Delta^2}{\Delta+1}
\sqrt{n}(\hat{\textbf{C}}_{0,2}-\textbf{C}_2)\textbf{m} + o_P(1).
\end{equation}
Plugging~\eqref{eq:unvecA_final} into~\eqref{eq:eq8} gives
\begin{align}\label{eq:theta}
    \sqrt{n}(\hat{\boldsymbol{\theta}}-\boldsymbol{\theta}) = -(p+2)\sqrt{n}\bar{\textbf{x}} -\beta\gamma\Delta^4\tau\sqrt{n}(\hat{\textbf{C}}_{0,2}-\textbf{C}_2)\textbf{m} +\sqrt{n}(\hat{\textbf{c}}_{0,3} - \textbf{c}_3)+o_P(1),
\end{align}
showing that $\hat{\boldsymbol{\theta}}$, and consequently $\hat{\boldsymbol{\theta}}/\|\hat{\boldsymbol{\theta}}\|$, has a limiting normal distribution.

To conclude the proof, we still compute the constant $C$ in the limiting covariance matrix. We have $\boldsymbol{\theta} = \beta \gamma \tau \Delta^4 \textbf{m}$ and $\| \boldsymbol{\theta} \|^2 = \beta^2 \gamma^2 \tau^3 \Delta^8$. Consequently, the proof of Theorem~\ref{theo:shortcut} reveals that $\textbf{t}\trans \sqrt{n}(\hat{\boldsymbol{\theta}}-\boldsymbol{\theta}) \rightsquigarrow \mathcal{N}(0, \beta^2 \gamma^2 \tau^3 \Delta^8 C) $, for any unit-length vector $\textbf{t}$ satisfying $\textbf{t}\trans  \textbf{m} = 0$. By working in an orthogonal basis given by $(\textbf{m}/\|\textbf{m}\|, \textbf{t}, \textbf{u}_3, \ldots , \textbf{u}_p)$, where the final $p - 2$ vectors are arbitrary, we thus have, by the CLT and \eqref{eq:unvecA_final} that
\begin{align*}
    \beta^2 \gamma^2 \tau^3 \Delta^8 C = \mathrm{Var} \{ - (p + 2) X_2 - \beta\gamma\Delta^4\tau^{3/2}  X_1 X_2 + X_2 (\Delta^2 X_1^2 + X_2^2 + X_3^2 + \ldots + X_p^2) \},
\end{align*}
where $X_1 \sim \alpha_1 \mathcal{N}(-\alpha_2 \tau^{1/2}, 1 ) + \alpha_2 \mathcal{N}(\alpha_1 \tau^{1/2}, 1 )$, $X_2, \ldots, X_p \sim \mathcal{N}(0, 1)$ and all these variables are independent of each other. We have the moments $\mathrm{E}(X_1) = 0$, $\mathrm{E}(X_1^2) = 1 + \beta \tau$, $\mathrm{E}(X_1^3) = \beta \gamma \tau^{3/2}$ and $\mathrm{E}(X_1^4) = \beta (1 - 3 \beta) \tau^2 + 6 \beta \tau + 3$, allowing us to simplify the above equation to $\beta^2 \gamma^2 \tau^3 \Delta^8 C = 2p + 1 - \beta^2 \gamma^2 \Delta^6 \tau^3 + \Delta^4 \mathrm{E}(X_1^4)$.
Observing now that $\gamma^2 = 1 - 4 \beta$, the desired limiting normal distribution follows.

\medskip

\end{proof}

\subsection{Proofs of the results in Section \ref{sec:estimator_2}}

\begin{proof}[Proof of Lemma \ref{lem:loperfido}]
    Letting $\textbf{y}, \textbf{z}$ denote independent copies of $\textbf{x}_w$, the right-hand side writes as $\mathrm{E}\{ \textbf{y} (\textbf{y} \otimes \textbf{y})\trans  (\textbf{z} \otimes \textbf{z}) \textbf{z}\trans  \} = \mathrm{E} \{ (\textbf{y}\trans  \textbf{z})^2 \textbf{y} \textbf{z}\trans  \}$. Whereas, the left-hand side takes the form $\sum_{k = 1}^p \mathrm{E}(\textbf{y} \textbf{y}\trans  \textbf{e}_k \textbf{y}\trans  \textbf{z} \textbf{e}_k\trans  \textbf{z} \textbf{z}\trans  ) = \mathrm{E} \{ (\textbf{y}\trans  \textbf{z})^2 \textbf{y} \textbf{z}\trans  \}$, proving the claim.
\end{proof}

\begin{proof}[Proof of Lemma \ref{lem:tobi_fisher_consistency}]
Letting $\textbf{A} := \textbf{C}_2^{-1/2}$, we have
\begin{align*}
    \textbf{T}_k(\textbf{A}(\textbf{x} - \mathrm{E}(\textbf{x}))) = \textbf{A} \mathrm{E}\{ (\textbf{e}_k\trans  \textbf{A} \textbf{x}) \cdot \textbf{x} \textbf{x}\trans  \} \textbf{A} = \beta \gamma (\textbf{e}_k\trans  \textbf{A} \textbf{h}) \textbf{A} \textbf{h} \textbf{h}\trans  \textbf{A},
\end{align*}
by the moment formulas used also in deriving Lemma \ref{lem:moment_gathering_lemma}. Consequently,
\begin{align*}
    \textbf{T}(\textbf{A}(\textbf{x} - \mathrm{E}(\textbf{x}))) = \sum_{k = 1}^p \textbf{T}_k(\textbf{A}\textbf{x})^2 = \beta^2 \gamma^2 (\textbf{h}\trans  \textbf{A}^2 \textbf{h})^2 \textbf{A} \textbf{h} \textbf{h}\trans  \textbf{A}.
\end{align*}
By the Sherman-Morrison formula,
\begin{align*}
    \textbf{A}^2 \textbf{h} = (\boldsymbol{\Sigma} + \beta \textbf{h} \textbf{h}\trans )^{-1} \textbf{h} = \frac{1}{1 + \beta \tau} \boldsymbol{\theta}\,, \quad \textbf{h}\trans  \textbf{A}^2 \textbf{h} = \frac{\tau}{1 + \beta \tau},
\end{align*}
where $\tau = \textbf{h}\trans  \boldsymbol{\Sigma}^{-1} \textbf{h}$.
Thus, $\textbf{u} = s \textbf{A} \textbf{h} / \| \textbf{A} \textbf{h} \|$ for some sign $s$, and
\begin{align*}
    \textbf{C}_2^{-1/2} \textbf{u} =  s \frac{\textbf{A}^2 \textbf{h} }{\| \textbf{A} \textbf{h} \|}  = s \{ \tau ( 1 + \beta \tau) \}^{-1/2} \boldsymbol{\theta},
\end{align*}
completing the proof.
\end{proof}

\begin{proof}[Proof of Lemma \ref{lem:tobi_ae}]
We have $\hat{\textbf{C}}_2(\textbf{A}\trans  \textbf{x}_i + \textbf{b}) = \textbf{A}\trans  \hat{\textbf{C}}_2( \textbf{x}_i) \textbf{A}$. Consequently, by Theorem 2.1 in \cite{ilmonen2012invariant}, 
\begin{align}\label{eq:inverse_root_ae}
    \hat{\textbf{C}}_2(\textbf{A}\trans  \textbf{x}_i + \textbf{b})^{-1/2} = \textbf{V} \hat{\textbf{C}}_2( \textbf{x}_i)^{-1/2} (\textbf{A}^{-1})\trans ,
\end{align}
where $\textbf{V}$ is the unique orthogonal matrix that makes $\textbf{V} \hat{\textbf{C}}_2( \textbf{x}_i)^{-1/2} (\textbf{A}^{-1})\trans $ symmetric. Hence, writing $\tilde{\textbf{x}}_i := \textbf{x}_i - \bar{\textbf{x}}$,
\begin{align*}
     & \hat{\textbf{T}}_k(\hat{\textbf{C}}_2(\textbf{A}\trans  \textbf{x}_i + \textbf{b})^{-1/2} \textbf{A}\trans  (\textbf{x}_i - \bar{\textbf{x}})) = \frac{1}{n} \sum_{i=1}^n \textbf{V} \hat{\textbf{C}}_2( \textbf{x}_i)^{-1/2} \tilde{\textbf{x}}_i \tilde{\textbf{x}}_i\trans  \hat{\textbf{C}}_2( \textbf{x}_i)^{-1/2} \textbf{V}\trans  \textbf{e}_k \tilde{\textbf{x}}_i\trans  \hat{\textbf{C}}_2( \textbf{x}_i)^{-1/2} \textbf{V}\trans .
\end{align*}
Consequently,
\begin{align*}
    \hat{\textbf{T}}(\hat{\textbf{C}}_2(\textbf{A}\trans  \textbf{x}_i + \textbf{b})^{-1/2} \textbf{A}\trans  (\textbf{x}_i - \bar{\textbf{x}})) =&  \frac{1}{n^2} \sum_{k=1}^n  \sum_{j=1}^n \textbf{V} \hat{\textbf{C}}_2( \textbf{x}_i)^{-1/2} \tilde{\textbf{x}}_k \tilde{\textbf{x}}_k\trans  \hat{\textbf{C}}_2( \textbf{x}_i)^{-1} \tilde{\textbf{x}}_j \tilde{\textbf{x}}_k\trans  \hat{\textbf{C}}_2( \textbf{x}_i)^{-1} \tilde{\textbf{x}}_j \tilde{\textbf{x}}_j\trans  \hat{\textbf{C}}_2( \textbf{x}_i)^{-1/2} \textbf{V}\trans  \\
     =& \textbf{V} \hat{\textbf{T}}( \hat{\textbf{C}}_2( \textbf{x}_i)^{-1/2} (\textbf{x}_i - \Bar{\textbf{x}})) \textbf{V}\trans .
\end{align*}
As the normal mixture is absolutely continuous w.r.t. the Lebesgue measure, the matrix $\hat{\textbf{T}}( \hat{\textbf{C}}_2( \textbf{x}_i)^{-1/2} (\textbf{x}_i - \Bar{\textbf{x}}))$ has almost surely distinct eigenvalues. Thus, almost surely,
\begin{align*}
    \hat{\textbf{u}}(\textbf{A}\trans  \textbf{x}_i + \textbf{b}) = s \textbf{V} \hat{\textbf{u}}(\textbf{x}_i), 
\end{align*}
for some sign $s$. The desired result now follows by combining the above with~\eqref{eq:inverse_root_ae}.

\end{proof}

We begin with an auxiliary result that shows how to obtain the limiting distribution of the leading eigenvector of an estimator that converges to a rank-one matrix. Its proof is exactly analogous to the proof of Theorem~A.1.2 in \cite{radojivcic2021large} and we refrain from including it here.

\begin{lemma}\label{lem:eigenvector_limdist}
Assume that the $n$-indexed sequence $\hat{\textbf{H}}$ of estimators, taking values in the set of symmetric $p \times p$ matrices, satisfies $\sqrt{n} ( \hat{\textbf{H}} - \psi \textbf{h} \textbf{h}\trans  ) = \mathcal{O}_P(1)$ for some $\psi > 0$ and $\textbf{h} \in \mathbb{R}^p$, $\| \textbf{h} \| = 1$. Then, letting $\hat{\textbf{u}}$ denote any leading eigenvector of $\hat{\textbf{H}}$, we have,
\begin{align*}
    \sqrt{n} (\hat{s}\hat{\textbf{u}} - \textbf{h}) = \frac{1}{\psi} \left( \textbf{I}_p - \textbf{h} \textbf{h}\trans  \right) \sqrt{n} ( \hat{\textbf{H}} - \psi \textbf{h} \textbf{h}\trans  ) \textbf{h} + o_P(1),
\end{align*}
for some $n$-indexed sequence of signs $\hat{s}$.
\end{lemma}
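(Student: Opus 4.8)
The plan is to carry out a standard first-order eigenvector perturbation analysis, driven by the eigenvalue--eigenvector identity of $\hat{\textbf{H}}$. Write $\textbf{E}_n := \hat{\textbf{H}} - \psi \textbf{h}\textbf{h}\trans = \mathcal{O}_P(n^{-1/2})$ for the perturbation, and observe that the limit $\psi \textbf{h}\textbf{h}\trans$ has a \emph{simple} leading eigenvalue $\psi > 0$, with eigenvector $\textbf{h}$, separated by a gap of size $\psi$ from its remaining (zero) eigenvalues. First I would establish consistency: since $\hat{\textbf{H}} \rightarrow_P \psi\textbf{h}\textbf{h}\trans$ and the leading eigenvalue of the limit is simple, continuity of the spectral decomposition supplies a sequence of signs $\hat{s}$ with $\hat{s}\hat{\textbf{u}} \rightarrow_P \textbf{h}$, and the leading eigenvalue $\hat{\lambda}$ of $\hat{\textbf{H}}$ satisfies $\hat{\lambda} \rightarrow_P \psi$. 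Setting $\textbf{d}_n := \hat{s}\hat{\textbf{u}} - \textbf{h} = o_P(1)$, the unit-norm constraint $\|\hat{s}\hat{\textbf{u}}\| = \|\textbf{h}\| = 1$ forces $\textbf{h}\trans \textbf{d}_n = -\tfrac{1}{2}\|\textbf{d}_n\|^2$, so the component of $\textbf{d}_n$ along $\textbf{h}$ is of second order and will ultimately be negligible.

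Next I would substitute $\hat{s}\hat{\textbf{u}} = \textbf{h} + \textbf{d}_n$ and $\hat{\textbf{H}} = \psi\textbf{h}\textbf{h}\trans + \textbf{E}_n$ into $\hat{\textbf{H}}(\hat{s}\hat{\textbf{u}}) = \hat{\lambda}(\hat{s}\hat{\textbf{u}})$, expand, and cancel the common leading term $\psi\textbf{h}$. Projecting the resulting identity onto the orthogonal complement of $\textbf{h}$ via $\textbf{I}_p - \textbf{h}\textbf{h}\trans$ is the decisive simplification: it annihilates both the second-order term $\psi(\textbf{h}\trans \textbf{d}_n)\textbf{h}$ and the component of $\hat{\lambda}\textbf{d}_n$ along $\textbf{h}$, leaving
\begin{align*}
(\textbf{I}_p - \textbf{h}\textbf{h}\trans)\textbf{E}_n\textbf{h} + (\textbf{I}_p - \textbf{h}\textbf{h}\trans)\textbf{E}_n\textbf{d}_n = \hat{\lambda}\,(\textbf{I}_p - \textbf{h}\textbf{h}\trans)\textbf{d}_n.
\end{align*}

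The rate of $\textbf{d}_n$ is then bootstrapped from this identity. Since $\textbf{E}_n = \mathcal{O}_P(n^{-1/2})$ and $\textbf{d}_n = o_P(1)$, the cross term $(\textbf{I}_p - \textbf{h}\textbf{h}\trans)\textbf{E}_n\textbf{d}_n$ is $o_P(n^{-1/2})$, while the driving term on the left is $\mathcal{O}_P(n^{-1/2})$ and $\hat{\lambda} \rightarrow_P \psi > 0$; hence the perpendicular part $\textbf{d}_n^\perp := (\textbf{I}_p - \textbf{h}\textbf{h}\trans)\textbf{d}_n = \mathcal{O}_P(n^{-1/2})$. Together with the second-order parallel part this gives $\textbf{d}_n = \mathcal{O}_P(n^{-1/2})$, which in turn confirms that the quadratic cross term is genuinely $o_P(n^{-1/2})$. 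Solving for $\textbf{d}_n^\perp$, using $\textbf{d}_n = \textbf{d}_n^\perp + o_P(n^{-1/2})$ and $1/\hat{\lambda} \rightarrow_P 1/\psi$, and multiplying through by $\sqrt{n}$ yields the claimed linearization with $\textbf{E}_n = \hat{\textbf{H}} - \psi\textbf{h}\textbf{h}\trans$ and $\textbf{d}_n = \hat{s}\hat{\textbf{u}} - \textbf{h}$.

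The main obstacle I anticipate is the careful bookkeeping of the orders of magnitude: one must verify that the along-$\textbf{h}$ component of $\textbf{d}_n$ is truly second order (which hinges on the unit-norm constraint), and the rate $\textbf{d}_n = \mathcal{O}_P(n^{-1/2})$ must be \emph{derived} from the perturbation equation rather than assumed, so as to legitimately discard the quadratic cross term. Establishing consistency and the simplicity of $\hat{\lambda}$ for large $n$ (so that a leading eigenvector is well-defined and a measurable sign choice exists) is routine but should be stated. As already noted in the excerpt, this argument parallels the proof of Theorem~A.1.2 in \cite{radojivcic2021large}.
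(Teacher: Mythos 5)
Your proposal is correct: the eigen-equation expansion, projection onto the orthogonal complement of $\textbf{h}$, the use of the unit-norm constraint to show the parallel component of $\hat{s}\hat{\textbf{u}} - \textbf{h}$ is second order, and the bootstrapping of the $\mathcal{O}_P(n^{-1/2})$ rate from the perturbation identity all go through. The paper itself omits the proof, deferring to Theorem~A.1.2 of \cite{radojivcic2021large}, and your argument is precisely the standard first-order eigenvector perturbation analysis that that result encapsulates, so there is nothing to flag.
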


Let now $\textbf{z}\sim\alpha_1\mathcal{N}(-\alpha_2\textbf{m},\textbf{I}_p)+\alpha_2\mathcal{N}(\alpha_1\textbf{m},\textbf{I}_p)$, where $\textbf{m} := \boldsymbol{\Sigma}^{-1/2} \textbf{h}$. We next provide a linearization of the estimator $\hat{s}_z \hat{\boldsymbol{\theta}}_{\mathrm{L}} (\textbf{z}_i)$, omitting, for convenience, both the signs $\hat{s}_z$ and the parenthesis notation specifying the sample we use to be $\textbf{z}_i$.

\begin{lemma}\label{lem:tobi_linearization}
We have
\begin{align*}
    \sqrt{n} \left( \hat{\boldsymbol{\theta}}_{\mathrm{L}} - \{ \tau ( 1 + \beta \tau) \}^{-1/2} \textbf{m} \right) =& - \frac{\Delta}{\| \textbf{m} \|} \left( \textbf{Q}_{\textbf{m}} + \frac{1}{2} \Delta^2 \frac{\textbf{m} \textbf{m}\trans  }{\| \textbf{m} \|^2}  \right) \sqrt{n}  \left\{ \frac{1}{n} \sum_{i = 1}^n (\textbf{m}\trans  \textbf{z}_i) \textbf{z}_i - (1 + \beta \tau) \textbf{m} \right\}  \\
    &- \frac{1}{\beta \gamma \tau \Delta^3 \| \textbf{m} \|} \textbf{Q}_{\textbf{m}} \sqrt{n} \bar{\textbf{z}} + \frac{1}{\beta \gamma \tau^2 \Delta  \| \textbf{m} \|} \textbf{Q}_{\textbf{m}} \sqrt{n}  \left\{ \frac{1}{n} \sum_{i = 1}^n (\textbf{m}\trans  \textbf{z}_i)^2 \textbf{z}_i - \beta \gamma \tau^2 \textbf{m} \right\} + o_P(1),
\end{align*}
where $\Delta^2 := 1/(1 + \beta \tau)$ and $\textbf{Q}_{\textbf{m}}$ is the projection onto the orthogonal complement of $\textbf{m}$.

\end{lemma}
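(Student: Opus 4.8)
The plan is to linearise the composite estimator $\hat{\boldsymbol{\theta}}_{\mathrm{L}} = \hat{\textbf{A}}\hat{\textbf{u}}$, with $\hat{\textbf{A}} := \hat{\textbf{C}}_2^{-1/2}$ and $\hat{\textbf{u}}$ the leading eigenvector of $\hat{\textbf{T}}$, by separating the standardisation from the eigenvector step. Writing $\textbf{A} := \textbf{C}_2(\textbf{z})^{-1/2} = \textbf{I}_p - (1 - \Delta)\textbf{w}\textbf{w}\trans$ and $\textbf{w} := \textbf{m}/\|\textbf{m}\|$, Lemma~\ref{lem:tobi_fisher_consistency} (with $\boldsymbol{\Sigma} = \textbf{I}_p$, $\textbf{h} = \textbf{m}$) identifies the population leading eigenvector as $\textbf{w}$ and gives $\textbf{A}\textbf{w} = \{\tau(1+\beta\tau)\}^{-1/2}\textbf{m}$, so that, folding the sign $\hat{s}_z$ into $\hat{\textbf{u}}$, a product-rule linearisation yields
\begin{align*}
\sqrt{n}\left(\hat{\boldsymbol{\theta}}_{\mathrm{L}} - \{\tau(1+\beta\tau)\}^{-1/2}\textbf{m}\right) = \sqrt{n}(\hat{\textbf{A}} - \textbf{A})\textbf{w} + \textbf{A}\,\sqrt{n}(\hat{\textbf{u}} - \textbf{w}) + o_P(1).
\end{align*}
For the eigenvector term I would apply Lemma~\ref{lem:eigenvector_limdist}: a direct computation from Lemma~\ref{lem:moment_gathering_lemma} shows that the population matrix is $\textbf{T} = \psi\,\textbf{w}\textbf{w}\trans$ with $\psi = \beta^2\gamma^2\Delta^6\tau^3$, so that $\textbf{A}\sqrt{n}(\hat{\textbf{u}} - \textbf{w}) = \psi^{-1}\textbf{A}\textbf{Q}_{\textbf{m}}\sqrt{n}(\hat{\textbf{T}} - \textbf{T})\textbf{w} + o_P(1) = \psi^{-1}\textbf{Q}_{\textbf{m}}\sqrt{n}(\hat{\textbf{T}} - \textbf{T})\textbf{w} + o_P(1)$, using $\textbf{A}\textbf{Q}_{\textbf{m}} = \textbf{Q}_{\textbf{m}}$.

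The key reduction is to write $\hat{\textbf{T}} = \hat{\textbf{S}}\trans\hat{\textbf{S}}$, the sample-level version of the identity in Lemma~\ref{lem:loperfido}, where $\hat{\textbf{S}} := n^{-1}\sum_{i=1}^n(\hat{\textbf{A}}\tilde{\textbf{z}}_i \otimes \hat{\textbf{A}}\tilde{\textbf{z}}_i)(\hat{\textbf{A}}\tilde{\textbf{z}}_i)\trans$ and $\tilde{\textbf{z}}_i := \textbf{z}_i - \bar{\textbf{z}}$. A second product rule gives $\sqrt{n}(\hat{\textbf{T}} - \textbf{T})\textbf{w} = \sqrt{n}(\hat{\textbf{S}} - \textbf{S})\trans\textbf{S}\textbf{w} + \textbf{S}\trans\sqrt{n}(\hat{\textbf{S}} - \textbf{S})\textbf{w} + o_P(1)$. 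Because $\textbf{S} = \beta\gamma\Delta^3(\textbf{m}\otimes\textbf{m})\textbf{m}\trans$, so that $\textbf{S}\trans \propto \textbf{m}(\textbf{m}\otimes\textbf{m})\trans$, the projection annihilates the second summand, $\textbf{Q}_{\textbf{m}}\textbf{S}\trans = \textbf{0}$, while $\textbf{S}\textbf{w} \propto \textbf{m}\otimes\textbf{m}$. This collapses the whole eigenvector contribution onto the single vector $\hat{\textbf{S}}\trans(\textbf{m}\otimes\textbf{m}) = \hat{\textbf{A}}\,n^{-1}\sum_i(\hat{\textbf{a}}\trans\tilde{\textbf{z}}_i)^2\tilde{\textbf{z}}_i$, where $\hat{\textbf{a}} := \hat{\textbf{A}}\textbf{m}$.

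I would then linearise this quantity by separating its three sources of randomness. (i) The outer $\hat{\textbf{A}}$ is treated as in the proof of Theorem~\ref{theo:limiting_st_moment}, via Lemma~\ref{lem:kronecker_sum_inverse} and \eqref{eq:unvecA_final}, producing a multiple of $\textbf{Q}_{\textbf{m}}\sqrt{n}(\hat{\textbf{C}}_{0,2} - \textbf{C}_2)\textbf{m}$. (ii) The inner $\hat{\textbf{a}}$ enters through the Jacobian of $\textbf{b}\mapsto\mathrm{E}\{(\textbf{b}\trans\textbf{z})^2\textbf{z}\}$, which equals $2\mathrm{E}\{(\textbf{a}\trans\textbf{z})\textbf{z}\textbf{z}\trans\} = 2\Delta\beta\gamma\tau\,\textbf{m}\textbf{m}\trans$ at $\textbf{a} = \Delta\textbf{m}$; being $\propto\textbf{m}$, this contribution is killed by $\textbf{Q}_{\textbf{m}}$. (iii) Expanding the centering in $(\textbf{m}\trans\tilde{\textbf{z}}_i)^2\tilde{\textbf{z}}_i$ and discarding the higher-order $\bar{\textbf{z}}$-terms leaves the clean average $n^{-1}\sum_i(\textbf{m}\trans\textbf{z}_i)^2\textbf{z}_i - \beta\gamma\tau^2\textbf{m}$, a term $-\tau(1+\beta\tau)\bar{\textbf{z}}$, and a further $\textbf{m}$-proportional term removed by $\textbf{Q}_{\textbf{m}}$; all the required moments are supplied by Lemma~\ref{lem:moment_gathering_lemma} and Lemma~\ref{lemma:lemma2}.

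Collecting the surviving pieces and simplifying with $\|\textbf{m}\| = \tau^{1/2}$ and $1 + \beta\tau = \Delta^{-2}$ yields the coefficients of $\bar{\textbf{z}}$ and of $n^{-1}\sum_i(\textbf{m}\trans\textbf{z}_i)^2\textbf{z}_i$ directly. For the coefficient of $\hat{\textbf{g}}_1 := n^{-1}\sum_i(\textbf{m}\trans\textbf{z}_i)\textbf{z}_i - (1+\beta\tau)\textbf{m}$ one must add the $\textbf{Q}_{\textbf{m}}$-contribution from the eigenvector term to that of $\sqrt{n}(\hat{\textbf{A}} - \textbf{A})\textbf{w}$; the latter is \emph{not} premultiplied by $\textbf{Q}_{\textbf{m}}$, so here I keep the exact square-root linearisation $\sqrt{n}(\hat{\textbf{A}} - \textbf{A})\textbf{m} = -\tfrac{\Delta^2}{2(1+\Delta)}(\textbf{A} + \textbf{A}^2)\sqrt{n}\,\hat{\textbf{g}}_1 + o_P(1)$, whose $\textbf{m}\textbf{m}\trans$-part (arising from $\textbf{A} + \textbf{A}^2 = 2\textbf{Q}_{\textbf{m}} + \Delta(1+\Delta)\textbf{w}\textbf{w}\trans$) supplies precisely the $\tfrac12\Delta^2\textbf{m}\textbf{m}\trans/\|\textbf{m}\|^2$ correction in the statement. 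The main obstacle is exactly this bookkeeping: tracking which terms of the whitened third-moment linearisation survive $\textbf{Q}_{\textbf{m}}$ and correctly combining the projected and unprojected standardisation contributions, since the factor $\tfrac12$ in the $\textbf{m}\textbf{m}\trans$ term hinges on using the exact, rather than the projected, form of the $\hat{\textbf{A}}$-linearisation.
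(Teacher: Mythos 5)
Your proposal is correct and follows essentially the same route as the paper's proof: the split into $\sqrt{n}(\hat{\textbf{A}}-\textbf{A})\textbf{w}+\textbf{A}\sqrt{n}(\hat{\textbf{u}}-\textbf{w})$, the use of Lemma~\ref{lem:eigenvector_limdist}, a product-rule linearisation of $\hat{\textbf{T}}$ whose $\textbf{m}$-proportional half is annihilated by $\textbf{Q}_{\textbf{m}}$, and the Kronecker-inverse treatment of $\sqrt{n}(\hat{\textbf{A}}-\textbf{A})$ via Lemma~\ref{lem:kronecker_sum_inverse}, including the key observation that the unprojected standardisation term must be kept exactly to produce the $\tfrac12\Delta^2\textbf{m}\textbf{m}\trans/\|\textbf{m}\|^2$ correction. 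The only (cosmetic) difference is that you factor $\hat{\textbf{T}}=\hat{\textbf{S}}\trans\hat{\textbf{S}}$ through the Kronecker-product representation rather than through $\sum_k\hat{\textbf{T}}_k^2$ as the paper does; the two are identical by Lemma~\ref{lem:loperfido} and lead to the same bookkeeping.
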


\begin{proof}[Proof of Lemma \ref{lem:tobi_linearization}]
Linearization and Lemma \ref{lem:eigenvector_limdist} together give,
\begin{align}\label{eq:tobi_linearization}
    & \sqrt{n} \left( \hat{\boldsymbol{\theta}}_{\mathrm{L}} - \{ \tau ( 1 + \beta \tau) \}^{-1/2} \textbf{m} \right) \nonumber = \sqrt{n} ( \hat{\textbf{A}} - \textbf{A} ) \frac{\textbf{m}}{\| \textbf{m} \|} + \textbf{A} \sqrt{n} \left( \hat{\textbf{u}} - \frac{\textbf{m}}{\| \textbf{m} \|} \right) + o_P(1) \nonumber \\
    =& \sqrt{n} ( \hat{\textbf{A}} - \textbf{A} ) \frac{\textbf{m}}{\| \textbf{m} \|} + \frac{1}{\lambda} \textbf{Q}_{\textbf{m}}  \sqrt{n} \left( \hat{\textbf{T}} - \lambda \frac{\textbf{m} \textbf{m}\trans  }{\| \textbf{m} \|^2} \right) \frac{\textbf{m}}{\| \textbf{m} \|} + o_P(1),
\end{align}
where $\hat{\textbf{A}} := \hat{\textbf{C}}_2^{-1/2} \rightarrow_p \textbf{C}_2^{-1/2} =: \textbf{A}$, $\lambda = \beta^2 \gamma^2 \tau^3 \Delta^6$ is the leading (and only non-zero) eigenvalue of $\textbf{T}$, $\Delta = (1 + \beta \tau)^{-1/2}$ and $\textbf{Q}_{\textbf{m}}$ is the projection onto the orthogonal complement of $\textbf{m}$.

As our first task, we obtain an expression for the second part of \eqref{eq:tobi_linearization} by linearizing $\hat{\textbf{T}}$: $\sqrt{n} (\hat{\textbf{T}} - \textbf{T}) = \sum_{k = 1}^p \{ \sqrt{n} (\hat{\textbf{T}}_k - \textbf{T}_k) \textbf{T}_k + \textbf{T}_k \sqrt{n} (\hat{\textbf{T}}_k - \textbf{T}_k)  \} + o_P(1)$. Now, $\textbf{T}_k$ is proportional to $\textbf{m} \textbf{m}\trans $, meaning that $ \textbf{Q}_{\textbf{m}} \textbf{T}_k = \textbf{0}$, allowing us to ignore the second term inside the sum above (as it later gets plugged in into \eqref{eq:tobi_linearization} and canceled by $\textbf{Q}_{\textbf{m}}$ there). We thus next linearize $\sqrt{n} (\hat{\textbf{T}}_k - \textbf{T}_k)$, which essentially amounts to replacing each $\hat{\textbf{A}}$ with $( \hat{\textbf{A}} - \textbf{A} ) + \textbf{A}$ and splitting each $\textbf{x}_i - \bar{\textbf{x}}$. In this, we use the fact that $\textbf{A} \textbf{m} = \Delta \textbf{m}$, obtained by simplifying $\textbf{A}( \textbf{I} +  \beta \textbf{m} \textbf{m}\trans ) \textbf{A} - \textbf{A}^2$. Thus, any resulting terms in the linearization that are proportional to identity or that start with either ``$\textbf{m}$'' or ``$\textbf{A} \textbf{m}$'' get later canceled by the projection $\textbf{Q}_{\textbf{m}}$ and, ignoring them, we obtain,

\begin{align*}
    \sqrt{n} (\hat{\textbf{T}}_k - \textbf{T}_k) 
    =& \frac{1}{\sqrt{n}} \sum_{i=1}^n \hat{\textbf{A}} ( \textbf{z}_i - \bar{\textbf{z}} ) ( \textbf{z}_i - \bar{\textbf{z}} )\trans  \hat{\textbf{A}} \textbf{e}_k ( \textbf{z}_i - \bar{\textbf{z}} )\trans  \hat{\textbf{A}} - \sqrt{n} \textbf{T}_k \\
    =& \sqrt{n} (\hat{\textbf{A}} - \textbf{A}) \mathrm{E}(\textbf{z} \textbf{z}\trans  \textbf{A} \textbf{e}_k \textbf{z}\trans ) \textbf{A} - \textbf{A} \sqrt{n} \bar{\textbf{z}} \textbf{e}_k\trans  - \textbf{e}_k   \sqrt{n} \bar{\textbf{z}}\trans  \textbf{A} + \frac{1}{\sqrt{n}} \sum_{i=1}^n \textbf{A} \textbf{z}_i  \textbf{z}_i\trans  \textbf{A} \textbf{e}_k \textbf{z}_i\trans  \textbf{A} - \sqrt{n} \textbf{T}_k + o_P(1).
\end{align*}
Using the facts that $ \mathrm{E}(\textbf{z} \textbf{z}\trans  \textbf{A} \textbf{e}_k \textbf{z}\trans ) = \beta \gamma \Delta \textbf{m} \textbf{m}\trans  \textbf{e}_k \textbf{m}\trans  $ and $\textbf{T}_k = \beta \gamma \Delta^3 \textbf{m} \textbf{m}\trans  \textbf{e}_k \textbf{m}\trans $, we thus get,
\begin{align*}
    & \sum_{k = 1}^p \sqrt{n} (\hat{\textbf{T}}_k - \textbf{T}_k) \textbf{T}_k = \sqrt{n} (\hat{\textbf{A}} - \textbf{A}) \beta^2 \gamma^2 \tau^2 \Delta^5 \textbf{m} \textbf{m}\trans  - \beta \gamma \tau \Delta^3 \textbf{A} \sqrt{n} \bar{\textbf{z}} \textbf{m}\trans  + \sqrt{n} \left(\beta \gamma \Delta^5 \frac{1}{n} \sum_{i=1}^n \textbf{A} \textbf{z}_i  \textbf{z}_i\trans  \textbf{m} \textbf{z}_i\trans  \textbf{m} \textbf{m}\trans  - \textbf{T} \right) + o_P(1).
\end{align*}
Plugging now in to \eqref{eq:tobi_linearization}, and using $ \textbf{Q}_{\textbf{m}}  \textbf{A} = \textbf{Q}_{\textbf{m}} $, we get
\begin{align}\label{eq:second_final_form}
    \sqrt{n} ( \hat{\textbf{A}} &- \textbf{A} ) \frac{\textbf{m}}{\| \textbf{m} \|} + \frac{1}{\lambda} \textbf{Q}_{\textbf{m}}  \sqrt{n} \left( \hat{\textbf{T}} - \lambda \frac{\textbf{m} \textbf{m}\trans  }{\| \textbf{m} \|^2} \right) \frac{\textbf{m}}{\| \textbf{m} \|} \nonumber \\
    =& \sqrt{n} ( \hat{\textbf{A}} - \textbf{A} ) \frac{\textbf{m}}{\| \textbf{m} \|} + \frac{1}{ \Delta } \textbf{Q}_{\textbf{m}} \sqrt{n} (\hat{\textbf{A}} - \textbf{A})  \frac{\textbf{m}}{\| \textbf{m} \|} -  \frac{1}{\beta \gamma \tau \Delta^3 \| \textbf{m} \|}  \textbf{Q}_{\textbf{m}}  \sqrt{n} \bar{\textbf{z}} +  \frac{1}{\beta \gamma \tau^2 \Delta  \| \textbf{m} \|} \textbf{Q}_{\textbf{m}} \sqrt{n} \left( \frac{1}{n} \sum_{i=1}^n (\textbf{m}\trans  \textbf{z}_i)^2 \textbf{z}_i  - \beta \gamma \tau^2 \textbf{m} \right). \nonumber
\end{align}
What remains now is the simplification of the terms involving $\sqrt{n} ( \hat{\textbf{A}} - \textbf{A} )$. For these, linearizing the relation $\sqrt{n} ( \hat{\textbf{A}} \hat{\textbf{C}}_2 \hat{\textbf{A}} - \textbf{I}_p) = \textbf{0}$ yields 
\begin{align*}
    \textbf{A} \sqrt{n} ( \hat{\textbf{C}}_2 -  \textbf{C}_2 ) \textbf{A} = - \sqrt{n} ( \hat{\textbf{A}} - \textbf{A} ) \textbf{C}_2^{1/2} - \textbf{C}_2^{1/2} \sqrt{n} ( \hat{\textbf{A}} - \textbf{A} ) + o_P(1).
\end{align*}
Using $\sqrt{n} ( \hat{\textbf{C}}_2 -  \textbf{C}_2 ) = \sqrt{n} ( \hat{\textbf{C}}_{02} -  \textbf{C}_2 ) + o_P(1)$ from the proof of Lemma \ref{lem:joint_limiting} and vectorizing gives
\begin{align*}
    (\textbf{I}_p \otimes \textbf{C}_2^{1/2} + \textbf{C}_2^{1/2} \otimes \textbf{I}_p ) \sqrt{n} \mathrm{vec} ( \hat{\textbf{A}} - \textbf{A} ) = - (\textbf{A} \otimes \textbf{A}) \sqrt{n} \mathrm{vec} ( \hat{\textbf{C}}_{02} -  \textbf{C}_2 ).
\end{align*}
Now, $\textbf{C}_2^{1/2} = (\textbf{I}_p + \beta \textbf{m} \textbf{m}\trans )^{1/2} = \textbf{I}_p + \alpha \textbf{m} \textbf{m}\trans  /\| \textbf{m} \|^2$ where $\alpha = \sqrt{1 + \beta \tau} - 1$, meaning that Lemma \ref{lem:kronecker_sum_inverse} gives
\begin{align*}
     (\textbf{I}_p \otimes \textbf{C}_2^{1/2} + \textbf{C}_2^{1/2} \otimes \textbf{I}_p )^{-1} = \frac{1}{2(\alpha+2)}\left[\textbf{I}_p\otimes\textbf{I}_p+(\alpha+1)\left\{ \left(\textbf{I}_p-\frac{\alpha}{\alpha+1} \frac{\textbf{m} \textbf{m}\trans  }{\| \textbf{m} \|^2} \right)\otimes \left(\textbf{I}_p-\frac{\alpha}{\alpha+1} \frac{\textbf{m} \textbf{m}\trans  }{\| \textbf{m} \|^2} \right)\right\}\right].
\end{align*}
Now, as $\textbf{A} = (\textbf{I}_p + \beta \textbf{m} \textbf{m}\trans )^{-1/2} = \textbf{I}_p +  \{ (1 + \beta \tau)^{-1/2} - 1 \} \textbf{m} \textbf{m}\trans  /\| \textbf{m} \|^2$, we get
\begin{align*}
    (\textbf{I}_p \otimes \textbf{C}_2^{1/2} + \textbf{C}_2^{1/2} \otimes \textbf{I}_p )^{-1}  (\textbf{A} \otimes \textbf{A})
    = \frac{1}{2(\alpha+2)} \left[ \textbf{A} \otimes \textbf{A} + (\alpha+1) \left\{ \left(\textbf{I}_p - (1 - \Delta^2) \frac{\textbf{m} \textbf{m}\trans  }{\| \textbf{m} \|^2} \right)\otimes \left(\textbf{I}_p - (1 - \Delta^2) \frac{\textbf{m} \textbf{m}\trans  }{\| \textbf{m} \|^2} \right)\right\} \right].
\end{align*}
Thus, by vectorizing and carrying out the Kronecker-multiplications, we get
\begin{align*}
    \left\{  \frac{\textbf{m}}{\| \textbf{m} \|}\trans  \otimes \left( \textbf{I}_p + \frac{1}{\Delta} \textbf{Q}_{\textbf{m}} \right) \right\} \sqrt{n} \mathrm{vec} ( \hat{\textbf{A}} - \textbf{A} )
    = - \frac{1}{(1 + \beta \tau)^{1/2}} \left\{  \frac{\textbf{m}}{\| \textbf{m} \|}\trans  \otimes \left( \textbf{Q}_{\textbf{m}} + \frac{1}{2(1 + \beta \tau)} \frac{\textbf{m} \textbf{m}\trans  }{\| \textbf{m} \|^2} \right) \right\} \sqrt{n} \mathrm{vec} ( \hat{\textbf{C}}_{02} - \textbf{C}_2 ) + o_P(1),
\end{align*}
after which unvectorizing yields the claim.

\end{proof}

\begin{proof}[Proof of Theorem \ref{theo:limiting_2}]
We now obtain the limiting normality and constant $C$ in exactly the same way as in Theorem \ref{theo:limiting_st_moment}. As such, we point out only the main steps here. The ''$\textbf{t}$-argument´´ together with the linearization in Lemma \ref{lem:tobi_linearization} shows that 
the constant $C$ satisfies
\begin{align*}
    \Delta^2 C = \mathrm{Var} \left\{ -\Delta X_1 X_2 - \frac{1}{\beta \gamma \tau \Delta^3 \| \textbf{m} \|} X_2 + \frac{1}{\beta \gamma \tau \Delta \| \textbf{m} \|} X_1^2 X_2 \right\},
\end{align*}
where $X_1, X_2$ are as in Theorem \ref{theo:limiting_st_moment}.

\end{proof}

\subsection{Proofs of the results in Section \ref{sec:jade}}

\begin{proof}[Proof of Lemma \ref{lemma:JADE1}]
    By affine equivariance, without loss of generality we can assume that the common group covariance is  $\boldsymbol{\Sigma}=\textbf{I}_p$ and the group means are $-\alpha_2\textbf{m}$ and $\alpha_1\textbf{m}$, with $\textbf{m}=\boldsymbol{\Sigma}^{-1/2}\textbf{h}$.  
    As shown in Lemma \ref{lem:moment_gathering_lemma}, we have $\textbf{T}_k(\textbf{x}_w)= \beta \gamma \Delta^3 (\textbf{e}_k\trans \textbf{m})\textbf{m}\textbf{m}\trans $, where $\Delta^2 = 1/(1 + \beta \tau)$ and we have used $\textbf{C}_2^{-1/2}  \textbf{m} = \Delta \textbf{m}$.
    Using this notation 
    $$
    \sum_{k = 1}^p \{\textbf{v}\trans \textbf{T}_k(\textbf{x}_w)\textbf{v} \}^2 = \beta^2 \gamma^2 \Delta^6 \tau (\textbf{v}\trans \textbf{m})^4,
    $$
    implying that this sum is maximized at $\textbf{v} = \pm \textbf{m}/\| \textbf{m} \|$. Consequently, $\boldsymbol{\theta}_{\mathrm{J}} = \textbf{C}_2^{-1/2} \textbf{m}/\| \textbf{m} \| = s \Delta \textbf{m} /\| \textbf{m} \| $. The result now follows via affine equivariance.
\end{proof}

The proof of Theorem \ref{theo:limiting_3} is based on a suitably linearized Taylor expansion of the gradient of the Lagrangian and the remainder of this section focuses on that, under the assumption of a standardized mixture. Even though 3-JADE satisfies only a weaker version of affine equivariance (Lemma \ref{lem:jade_ae}), this assumption (standardized mixture) is justified by the following: (i) Arguing as in Theorem \ref{theo:form_of_the_limiting_cov}, we see that the limiting behavior of $\hat{\boldsymbol{\theta}}_{\mathrm{J}}( \textbf{x}_i )/\| \hat{\boldsymbol{\theta}}_{\mathrm{J}}( \textbf{x}_i ) \|$ is determined by the limiting behavior of $\hat{\boldsymbol{\theta}}_{\mathrm{J}}( \textbf{z}_i )/\| \hat{\boldsymbol{\theta}}_{\mathrm{J}}( \textbf{z}_i ) \|$ where $\hat{\boldsymbol{\theta}}_{\mathrm{J}}( \textbf{z}_i )$ is \textit{some} sequence of 3-JADE estimators for the standardized mixture $\textbf{z}_i$. (ii) Below we show that \textit{all} sequences $\hat{\boldsymbol{\theta}}_{\mathrm{J}}( \textbf{z}_i )$ of 3-JADE estimators for $\textbf{z}_i$ have the same limiting distribution. Hence, the weaker form of affine equivariance is not an issue and we may use Theorem \ref{theo:shortcut} to compute the limiting constant $C$. 

The gradient $\nabla \ell_n (\textbf{u}_n)$ (with the Lagrangian multiplier plugged in) has the following first-order Taylor expansion around the population optimum $\textbf{u} = \textbf{m}/\| \textbf{m} \|$: $0=\nabla \ell_n(\textbf{u})+\nabla\trans \nabla \ell_n(\textbf{u})(\textbf{u}_n-\textbf{u})+O(\|\textbf{u}_n-\textbf{u}\|^2)$,
further allowing us to write 
\begin{equation}\label{eq:JADE_expansion}
-\sqrt{n}\nabla \ell_n(\textbf{u})=\textbf{H}\sqrt{n}(\textbf{u}_n-\textbf{u})+o_p(1),
\end{equation}
where $\textbf{H}$ is an a.s. limit of $\nabla\trans \nabla \ell_n(\textbf{u})$; for more technical details on the expansion see e.g. \cite{radojivcic2021large}. That $\textbf{u}_n$ converges in probability to $\textbf{u}$ (up to sign) follows from the standard M-estimator argument over compact spaces. Lemma~\ref{lemma:JADE_Hessian} gives a closed-form expression for $\textbf{H}$, while Lemma \ref{lemma:JADE_Tk} comprises the auxiliary results needed for the calculation of $\textbf{H}$. The proof of the latter is omitted as it follows the same argument as that of Lemma \ref{lem:moment_gathering_lemma}.

\begin{lemma}\label{lemma:JADE_Tk}
Letting $\textbf{u} := \textbf{m}/\tau^{1/2} = \textbf{m}/\|\textbf{m}\|$, we have
\begin{align*}
    \textbf{T}_k(\textbf{x}_w) = \beta \gamma \Delta^3 (\textbf{e}_k\trans \textbf{m})\textbf{m}\textbf{m}\trans , \quad \quad \textbf{T}_k(\textbf{x}_w)\textbf{u} = \beta\gamma\Delta^3 \tau^{1/2} (\textbf{e}_k\trans \textbf{m})\textbf{m} \quad \mbox{and} \quad \textbf{u}\trans  \textbf{T}_k(\textbf{x}_w)\textbf{u}= \beta\gamma\Delta^3\tau(\textbf{e}_k\trans \textbf{m}).
\end{align*}
  
\end{lemma}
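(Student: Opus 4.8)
The plan is to reduce every quantity in the statement to the third-order moment tensor of the standardized mixture $\textbf{x}$ and then exploit its rank-one structure together with the whitening identity $\textbf{A}\textbf{m} = \Delta\textbf{m}$ that is already used in the preceding proofs. Recall that in the standardized setting $\mathrm{E}(\textbf{x}) = \textbf{0}$, $\textbf{C}_2(\textbf{x}) = \textbf{I}_p + \beta\textbf{m}\textbf{m}\trans$, and, by the Sherman-Morrison formula, $\textbf{A} := \textbf{C}_2{}^{-1/2}(\textbf{x}) = \textbf{I}_p - (1 - \Delta)\textbf{m}\textbf{m}\trans/\tau$, so that $\textbf{A}\textbf{m} = \Delta\textbf{m}$ with $\Delta = (1 + \beta\tau)^{-1/2}$. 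Since $\textbf{x}_w = \textbf{A}\textbf{x}$ and $\textbf{A}$ is symmetric, expanding the definition $\textbf{T}_k(\textbf{x}_w) = \mathrm{E}(\textbf{x}_w\textbf{x}_w\trans\textbf{e}_k\textbf{x}_w\trans)$ gives $\textbf{T}_k(\textbf{x}_w) = \textbf{A}\,\mathrm{E}\{((\textbf{A}\textbf{e}_k)\trans\textbf{x})\,\textbf{x}\textbf{x}\trans\}\,\textbf{A}$, so the whole problem hinges on the single third-order moment $\mathrm{E}\{(\textbf{v}\trans\textbf{x})\textbf{x}\textbf{x}\trans\}$ for a fixed vector $\textbf{v}$.

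Next I would compute this third moment exactly as in item~2 of the proof of Lemma~\ref{lem:moment_gathering_lemma}. Writing the mixture as a convex combination of its two Gaussian components and using the standard Gaussian third-moment formula, the terms linear in the identity cancel because $\alpha_1\boldsymbol{\mu}_1 + \alpha_2\boldsymbol{\mu}_2 = \textbf{0}$, leaving only the contribution of the means. With $\boldsymbol{\mu}_1 = -\alpha_2\textbf{m}$, $\boldsymbol{\mu}_2 = \alpha_1\textbf{m}$ and the identity $\alpha_1(-\alpha_2)^3 + \alpha_2\alpha_1^3 = \beta\gamma$, the third moment tensor is rank one, $\mathrm{E}(x_ix_jx_l) = \beta\gamma\,m_im_jm_l$. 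Contracting against $\textbf{v}$ therefore yields $\mathrm{E}\{(\textbf{v}\trans\textbf{x})\textbf{x}\textbf{x}\trans\} = \beta\gamma(\textbf{v}\trans\textbf{m})\textbf{m}\textbf{m}\trans$.

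Substituting $\textbf{v} = \textbf{A}\textbf{e}_k$ and using $\textbf{e}_k\trans\textbf{A}\textbf{m} = \Delta(\textbf{e}_k\trans\textbf{m})$ gives $\mathrm{E}\{((\textbf{A}\textbf{e}_k)\trans\textbf{x})\textbf{x}\textbf{x}\trans\} = \beta\gamma\Delta(\textbf{e}_k\trans\textbf{m})\textbf{m}\textbf{m}\trans$, and then sandwiching this between the two outer factors of $\textbf{A}$ and applying $\textbf{A}\textbf{m} = \Delta\textbf{m}$ twice more produces the claimed $\textbf{T}_k(\textbf{x}_w) = \beta\gamma\Delta^3(\textbf{e}_k\trans\textbf{m})\textbf{m}\textbf{m}\trans$; the three factors of $\Delta$ arise, respectively, from the inner contraction of $\textbf{A}\textbf{e}_k$ against $\textbf{m}$ and from the two outer multiplications $\textbf{A}\textbf{m}$. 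The remaining two identities are then immediate: right-multiplying by $\textbf{u} = \textbf{m}/\tau^{1/2}$ and using $\textbf{m}\trans\textbf{m} = \tau$ gives $\textbf{T}_k(\textbf{x}_w)\textbf{u} = \beta\gamma\Delta^3\tau^{1/2}(\textbf{e}_k\trans\textbf{m})\textbf{m}$, and a further contraction with $\textbf{u}\trans$ gives $\textbf{u}\trans\textbf{T}_k(\textbf{x}_w)\textbf{u} = \beta\gamma\Delta^3\tau(\textbf{e}_k\trans\textbf{m})$. There is no genuine obstacle beyond careful bookkeeping of the three $\Delta$-factors; the only point warranting attention is that the middle index $\textbf{e}_k$ in the definition of $\textbf{T}_k$ itself transforms under whitening into $\textbf{A}\textbf{e}_k$, which is precisely what supplies the third power of $\Delta$ rather than merely the second.
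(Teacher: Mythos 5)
Your proposal is correct and follows essentially the route the paper intends: the paper omits the proof, noting it follows the same moment computations as Lemma~\ref{lem:moment_gathering_lemma}, and indeed the first identity is exactly the expression $\textbf{T}_k(\textbf{A}\{\textbf{x}-\mathrm{E}(\textbf{x})\}) = \beta\gamma(\textbf{e}_k\trans\textbf{A}\textbf{h})\textbf{A}\textbf{h}\textbf{h}\trans\textbf{A}$ already derived in the proof of Lemma~\ref{lem:tobi_fisher_consistency}, specialized via $\textbf{A}\textbf{m}=\Delta\textbf{m}$. Your bookkeeping of the three $\Delta$-factors and the two trailing contractions with $\textbf{u}$ is accurate.
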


\begin{lemma}\label{lemma:JADE_Hessian}
Let $\nabla\trans \nabla \ell_n(\textbf{u})$ be the gradient of $\nabla \ell_n(\textbf{u})$ that is given in \eqref{eq:JADE1}. Then the a.s. limit $\textbf{H}$ of $\nabla\trans \nabla \ell_n(\textbf{u})$ evaluated at $\textbf{u} = \textbf{m}/\tau^{1/2}$ exists and is given by
\begin{align*}
    \textbf{H}=-4\beta^2 \gamma^2 \Delta^6 \tau^3 \left(\textbf{I}_p+\frac{\textbf{m}\textbf{m}\trans }{\| \textbf{m} \|^2} \right).
\end{align*}

\end{lemma}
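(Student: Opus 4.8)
The plan is to compute the Jacobian of the multiplier-substituted gradient $\nabla\ell_n$ symbolically, pass to the almost sure limit by replacing each $\hat{\textbf{T}}_k$ with its population counterpart $\textbf{T}_k := \textbf{T}_k(\textbf{x}_w)$, and finally evaluate the resulting matrix at $\textbf{u} = \textbf{m}/\tau^{1/2}$ using the three identities supplied by Lemma \ref{lemma:JADE_Tk}. First I would substitute $\lambda = 2\sum_{k}(\textbf{u}\trans\hat{\textbf{T}}_k\textbf{u})^2$ into the gradient, giving $\nabla\ell_n(\textbf{u}) = 4\sum_{k=1}^p(\textbf{u}\trans\hat{\textbf{T}}_k\textbf{u})\hat{\textbf{T}}_k\textbf{u} - 4\bigl\{\sum_{k=1}^p(\textbf{u}\trans\hat{\textbf{T}}_k\textbf{u})^2\bigr\}\textbf{u}$. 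I would then note that each $\hat{\textbf{T}}_k$ is symmetric, since its $(a,b)$-entry equals $(1/n)\sum_i x_{ia}x_{ib}x_{ik}$; hence $\nabla(\textbf{u}\trans\hat{\textbf{T}}_k\textbf{u}) = 2\hat{\textbf{T}}_k\textbf{u}$ and no symmetrization is required.

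Next I would differentiate term by term. Writing $a_k := \textbf{u}\trans\hat{\textbf{T}}_k\textbf{u}$ with $\nabla a_k = 2\hat{\textbf{T}}_k\textbf{u}$, the product rule yields
\begin{align*}
\nabla\trans\nabla\ell_n(\textbf{u}) = 8\sum_{k=1}^p\hat{\textbf{T}}_k\textbf{u}\textbf{u}\trans\hat{\textbf{T}}_k + 4\sum_{k=1}^p a_k\hat{\textbf{T}}_k - 16\,\textbf{u}\sum_{k=1}^p a_k\textbf{u}\trans\hat{\textbf{T}}_k - 4\Bigl(\sum_{k=1}^p a_k^2\Bigr)\textbf{I}_p,
\end{align*}
where the first two terms arise from the Jacobian of $4\sum_k a_k\hat{\textbf{T}}_k\textbf{u}$ and the last two from that of $4(\sum_k a_k^2)\textbf{u}$ (the cross term coming from differentiating the substituted multiplier). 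Since each $\hat{\textbf{T}}_k$ is a sample third moment it converges almost surely to $\textbf{T}_k$ by the strong law of large numbers, and since the displayed expression is a polynomial in the entries of the $\hat{\textbf{T}}_k$ and in $\textbf{u}$, the continuous mapping theorem yields the almost sure limit $\textbf{H}(\textbf{u})$, obtained by replacing every $\hat{\textbf{T}}_k$ with $\textbf{T}_k$. This settles the existence assertion.

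Finally I would evaluate $\textbf{H}(\textbf{u})$ at $\textbf{u} = \textbf{m}/\tau^{1/2}$. Abbreviating $c := \beta\gamma\Delta^3$ and $m_k := \textbf{e}_k\trans\textbf{m}$, Lemma \ref{lemma:JADE_Tk} gives $\textbf{T}_k = c\,m_k\textbf{m}\textbf{m}\trans$, $\textbf{T}_k\textbf{u} = c\tau^{1/2}m_k\textbf{m}$ and $a_k = c\tau m_k$, so that the first three terms each collapse to a multiple of $\textbf{m}\textbf{m}\trans$ and the last to a multiple of $\textbf{I}_p$, with every sum over $k$ resolved through $\sum_{k=1}^p m_k^2 = \|\textbf{m}\|^2 = \tau$. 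The three $\textbf{m}\textbf{m}\trans$-terms carry coefficients $8c^2\tau^2$, $4c^2\tau^2$ and $-16c^2\tau^2$, summing to $-4c^2\tau^2$, while the identity term contributes $-4c^2\tau^3$; factoring out $-4c^2\tau^3 = -4\beta^2\gamma^2\Delta^6\tau^3$ produces the claimed $\textbf{H} = -4\beta^2\gamma^2\Delta^6\tau^3(\textbf{I}_p + \textbf{m}\textbf{m}\trans/\|\textbf{m}\|^2)$. The one real bookkeeping hazard is the cross term $-16\,\textbf{u}\sum_k a_k\textbf{u}\trans\hat{\textbf{T}}_k$; omitting or mis-signing it would wreck the cancellation $8+4-16 = -4$ that fixes the coefficient of $\textbf{m}\textbf{m}\trans$.
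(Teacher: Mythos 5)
Your proposal is correct and follows essentially the same route as the paper: differentiate the multiplier-substituted gradient term by term, pass to the almost sure limit by the strong law and continuity, and evaluate at $\textbf{u}=\textbf{m}/\tau^{1/2}$ via Lemma \ref{lemma:JADE_Tk}, arriving at the same coefficients $8+4-16=-4$ for the $\textbf{m}\textbf{m}\trans$ part and $-4c^2\tau^3$ for the identity part. The only cosmetic difference is that you write the cross term as $-16\sum_k a_k\textbf{u}\textbf{u}\trans\hat{\textbf{T}}_k$ while the paper writes its transpose $-16\sum_k a_k\hat{\textbf{T}}_k\textbf{u}\textbf{u}\trans$; the two coincide at the evaluation point since every factor collapses to a multiple of $\textbf{m}\textbf{m}\trans$.
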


\begin{proof}[Proof of Lemma \ref{lemma:JADE_Hessian}]
   It is straightforward to verify that 
   $$
\frac{1}{4}\nabla\trans \nabla \ell_n(\textbf{u})=\sum_{k=1}^p\left( 2\hat{\textbf{T}}_k\textbf{u}\textbf{u}\trans \hat{\textbf{T}}_k+(\textbf{u}\trans \hat{\textbf{T}}_k\textbf{u})\hat{\textbf{T}}_k - 4(\textbf{u}\trans \hat{\textbf{T}}_k\textbf{u})\hat{\textbf{T}}_k\textbf{u}\textbf{u}\trans -(\textbf{u}\trans \hat{\textbf{T}}_k\textbf{u})^2\textbf{I}_p\right).
   $$
   The law of large numbers implies that $\nabla\trans \nabla \ell_n(\textbf{u})\to_{a.s.} \textbf{H},$
where 
$$
\frac{1}{4}\textbf{H}=\sum_{k=1}^p\left( 2\textbf{T}_{k}\textbf{u}\textbf{u}\trans \textbf{T}_{k}+(\textbf{u}\trans \textbf{T}_{k}\textbf{u})\textbf{T}_{k} - 4(\textbf{u}\trans \textbf{T}_{k}\textbf{u})\textbf{T}_{k}\textbf{u}\textbf{u}\trans -(\textbf{u}\trans \textbf{T}_{k}\textbf{u})^2\textbf{I}_p\right).
   $$
   Using auxiliary Lemma \ref{lemma:JADE_Tk}, let us now simplify the four sums in $\textbf{H}$:
   \begin{align*}
    &\sum_{k=1}^p\textbf{T}_{k}\textbf{u}\textbf{u}\trans \textbf{T}_{k}  =  \beta^2\gamma^2\Delta^6\tau\textbf{m}\textbf{m}\trans \sum_{k=1}^p\textbf{e}_k\textbf{e}_k\trans \textbf{m}\textbf{m}\trans =\beta^2\gamma^2\Delta^6\tau^2\textbf{m}\textbf{m}\trans ,\\
    &\sum_{k=1}^p (\textbf{u}\trans \textbf{T}_{k}\textbf{u})\textbf{T}_{k} = \beta^2\gamma^2\Delta^6\tau \left( \textbf{m}\trans \sum_{k=1}^p\textbf{e}_k\textbf{e}_k\trans \textbf{m} \right) \textbf{m}\textbf{m}\trans  =\beta^2\gamma^2\Delta^6\tau^2\textbf{m}\textbf{m}\trans ,\\
     &\sum_{k=1}^p(\textbf{u}\trans \textbf{T}_{k}\textbf{u})\textbf{T}_{k}\textbf{u}\textbf{u}\trans  =\beta^2\gamma^2\Delta^6\tau^2\textbf{m}\textbf{m}\trans ,\\
      &\sum_{k=1}^p(\textbf{u}\trans \textbf{T}_{k}\textbf{u})^2\textbf{I}_p=\beta^2\gamma^2\Delta^6\tau^2 \left( \textbf{m}\trans \sum_{k=1}^p\textbf{e}_k\textbf{e}_k\trans \textbf{m} \right) \textbf{I}_p=\beta^2\gamma^2\Delta^6\tau^3 \textbf{I}_p.
   \end{align*}
   Plugging these back into the expression for $\textbf{H}$ gives
   $$
    \textbf{H}=-4\beta^2\gamma^2\Delta^6\tau^3 \left( \textbf{I}_p+\frac{\textbf{m}\textbf{m}\trans }{\tau} \right).
   $$
\end{proof}

Lemma \ref{lemma:JADE_Hessian} additionally implies that $\textbf{H}$ has only two distinct eigenvalues:   
$-8\beta^2\gamma^2\Delta^6\tau^3$ belonging to $\textbf{m}/\| \textbf{m} \|$, and  $-4\beta^2\gamma^2\Delta^6\tau^3$ with multiplicity $p-1$. This further implies that $\textbf{H}$ is a regular matrix and that for any unit-length vector $\textbf{t}$ such that $\textbf{t}\trans \textbf{m} = 0$, we have $\textbf{t}\trans \textbf{H}^{-1}=(-4\beta^2\gamma^2\Delta^6\tau^3)^{-1}\textbf{t}\trans $.

\begin{proof}[Proof of Theorem \ref{theo:limiting_3}]

Equation~\eqref{eq:JADE_expansion} together with Lemma~\ref{lemma:JADE_Hessian} imply that $\sqrt{n}(\textbf{u}_n-\textbf{u})=-\textbf{H}^{-1}\sqrt{n}\nabla \ell_n(\textbf{u})+o_p(1)$.
To show that $\sqrt{n}(\textbf{u}_n-\textbf{u})$ indeed has a limiting normal distribution, we start by linearizing the terms in $\sqrt{n}\nabla \ell_n(\textbf{u})$. 
\begin{align*}
\frac{1}{4}\nabla\ell_n(\textbf{u})=\sum_{k=1}^p(&\textbf{u}\trans \hat{\textbf{T}}_k\textbf{u})\hat{\textbf{T}}_k\textbf{u}-\sum_{k=1}^p(\textbf{u}\trans \hat{\textbf{T}}_k\textbf{u})^2\textbf{u} = \sum_{k=1}^p (\textbf{u}\trans \textbf{T}_k\textbf{u}) \hat{\textbf{T}}_k\textbf{u}+\sum_{k=1}^p \{ \textbf{u}\trans (\hat{\textbf{T}}_k-\textbf{T}_k)\textbf{u} \}\hat{\textbf{T}}_k\textbf{u}\\
&-\sum_{k=1}^p(\textbf{u}\trans \textbf{T}_k\textbf{u})^2\textbf{u}
-\sum_{k=1}^p\{ \textbf{u}\trans (\hat{\textbf{T}}_k-\textbf{T}_k)\textbf{u}\}^2\textbf{u}-2\sum_{k=1}^p\{\textbf{u}\trans (\hat{\textbf{T}}_k-\textbf{T}_k)\textbf{u}\}(\textbf{u}\trans \textbf{T}_k\textbf{u})\textbf{u}\\
=\sum_{k=1}^p(&\textbf{u}\trans \textbf{T}_k\textbf{u})(\hat{\textbf{T}}_k-\textbf{T}_k)\textbf{u}+\sum_{k=1}^p(\textbf{u}\trans \textbf{T}_k\textbf{u})\textbf{T}_k\textbf{u}+\sum_{k=1}^p\{\textbf{u}\trans (\hat{\textbf{T}}_k-\textbf{T}_k)\textbf{u}\}\textbf{T}_k\textbf{u} + \sum_{k=1}^p\{\textbf{u}\trans (\hat{\textbf{T}}_k-\textbf{T}_k)\textbf{u}\}(\hat{\textbf{T}}_k-\textbf{T}_k)\textbf{u} \\
&-\sum_{k=1}^p(\textbf{u}\trans \textbf{T}_k\textbf{u})^2\textbf{u}
-\sum_{k=1}^p\{\textbf{u}\trans (\hat{\textbf{T}}_k-\textbf{T}_k)\textbf{u}\}^2\textbf{u} - 2\sum_{k=1}^p\{\textbf{u}\trans (\hat{\textbf{T}}_k-\textbf{T}_k)\textbf{u}\}(\textbf{u}\trans \textbf{T}_k\textbf{u})\textbf{u}.
\end{align*}
Since $\textbf{u}$ is a solution to optimization problem \eqref{eq:3JADE_optimization}, we have $\sum_{k=1}^p(\textbf{u}\trans \textbf{T}_{k}\textbf{u})\textbf{T}_{k}\textbf{u}-\sum_{k=1}^p(\textbf{u}\trans \textbf{T}_{k}\textbf{u})^2\textbf{u}=\textbf{0}$,
further giving 
\begin{align*}
\frac{1}{4}\sqrt{n}\nabla\ell_n(\textbf{u})&=\sum_{k=1}^p(\textbf{u}\trans \textbf{T}_k\textbf{u})\sqrt{n}(\hat{\textbf{T}}_k-\textbf{T}_k)\textbf{u}+\sum_{k=1}^p\{\textbf{u}\trans \sqrt{n}(\hat{\textbf{T}}_k-\textbf{T}_k)\textbf{u}\}\textbf{T}_k\textbf{u} + \sum_{k=1}^p\{\textbf{u}\trans \sqrt{n}(\hat{\textbf{T}}_k-\textbf{T}_k)\textbf{u}\}(\hat{\textbf{T}}_k-\textbf{T}_k)\textbf{u} \\
&-\sum_{k=1}^p\{\textbf{u}\trans \sqrt{n}(\hat{\textbf{T}}_k-\textbf{T}_k)\textbf{u}\textbf{u}\trans (\hat{\textbf{T}}_k-\textbf{T}_k)\textbf{u}\}\textbf{u}
-2\sum_{k=1}^p\{\textbf{u}\trans \sqrt{n}(\hat{\textbf{T}}_k-\textbf{T}_k)\textbf{u}\}(\textbf{u}\trans \textbf{T}_k\textbf{u})\textbf{u}.
\end{align*}
Additional linearization and the law of large numbers imply that $\hat{\textbf{T}}_k-\textbf{T}_k=o_P(1)$, allowing for simplification in the upper linearization, i.e.,
\begin{align*}
\sqrt{n}\nabla\ell_n(\textbf{u})&=4\sum_{k=1}^p(\textbf{u}\trans \textbf{T}_k\textbf{u})\sqrt{n}(\hat{\textbf{T}}_k-\textbf{T}_k)\textbf{u}+4\sum_{k=1}^p\{\textbf{u}\trans \sqrt{n}(\hat{\textbf{T}}_k-\textbf{T}_k)\textbf{u}\}\textbf{T}_k\textbf{u} - 8\sum_{k=1}^p\{\textbf{u}\trans \sqrt{n}(\hat{\textbf{T}}_k-\textbf{T}_k)\textbf{u}\}(\textbf{u}\trans \textbf{T}_k\textbf{u})\textbf{u}+o_P(1).
\end{align*}

The asymptotic normality of $\sqrt{n}(\hat{\textbf{T}}_k-\textbf{T}_k)$ discussed in the proof of Lemma \ref{lem:tobi_linearization} now implies that $\sqrt{n}(\textbf{u}_n-\textbf{u})$ is also asymptotically normal. Consequently, Theorems \ref{theo:form_of_the_limiting_cov} and \ref{theo:shortcut} can be used to obtain the precise form of the asymptotic covariance matrix of the 3-JADE estimator. The former gives the matrix up to proportionality (due to AE) and the latter allows finding the method-specific constant $C$. This constant requires obtaining an expansion of $\textbf{t}\trans  \sqrt{n}(\textbf{u}_n-\textbf{u})=-\textbf{t}\trans \textbf{H}^{-1}\sqrt{n} \nabla\ell_n(\textbf{u})+o_P(1)$, which we will do next.

As shown after Lemma \ref{lemma:JADE_Hessian}, or any unit-length vector $\textbf{t}$ such that $\textbf{t}\trans \textbf{m} = 0$, we have $\textbf{t}\trans \textbf{H}^{-1}=(-4\beta^2\gamma^2\Delta^6\tau^3)^{-1}\textbf{t}\trans $, further implying that $\textbf{t}\trans \sqrt{n}(\textbf{u}_n-\textbf{u})$ can be written as
\begin{align*}(\beta^2\gamma^2\Delta^6\tau^3)^{-1}\textbf{t}\trans \left[\sum_{k=1}^p (\textbf{u}\trans \textbf{T}_k\textbf{u}) \sqrt{n}(\hat{\textbf{T}}_k-\textbf{T}_k)\textbf{u} + \sum_{k=1}^p\{ \textbf{u}\trans \sqrt{n}(\hat{\textbf{T}}_k-\textbf{T}_k)\textbf{u}\}\textbf{T}_k\textbf{u} -2\sum_{k=1}^p\{\textbf{u}\trans \sqrt{n}(\hat{\textbf{T}}_k-\textbf{T}_k)\textbf{u}\}(\textbf{u}\trans \textbf{T}_k\textbf{u})\textbf{u}\right]+o_P(1).
\end{align*}
As $\textbf{t}\trans \textbf{u} = 0$, and $\textbf{T}_k\textbf{t}=\textbf{0}$, the final two terms in the upper expansion vanish, leaving $\textbf{t}\trans \sqrt{n}(\textbf{u}_n-\textbf{u}) = \frac{1}{\beta\gamma\Delta^3\tau^2} \sum_{k = 1}^p \{ \textbf{t}\trans \sqrt{n}(\hat{\textbf{T}}_k-\textbf{T}_k)\textbf{u} \} (\textbf{e}_k\trans  \textbf{m}) + o_P(1)$.
Plugging in the expression for $\sqrt{n}(\hat{\textbf{T}}_k-\textbf{T}_k)$ from the proof of Lemma \ref{lem:tobi_linearization}, we get
\begin{align*}
    \textbf{t}\trans \sqrt{n}(\textbf{u}_n-\textbf{u})
    = \frac{1}{\beta\gamma\Delta^3\tau^2} \left\{ \beta \gamma \Delta^2 \tau^{2} \textbf{t}\trans  \sqrt{n} (\hat{\textbf{A}} - \textbf{A} ) \textbf{u} - \tau^{1/2} \textbf{t}\trans  \sqrt{n} \bar{\textbf{z}} + \Delta^2 \tau^{1/2} \textbf{t}\trans  \sqrt{n} \left( \frac{1}{n} \sum_{i=1}^n \textbf{z}_i  (\textbf{z}_i\trans  \textbf{u})^2 - \beta \gamma \tau \textbf{m} \right) \right\} + o_P(1). 
\end{align*}
Invoking now Theorem \ref{theo:shortcut} and its proof, we have
\begin{align*}
         -\textbf{t}\trans  \sqrt{n} (\hat{\textbf{C}}_2 - \textbf{C}_2) \textbf{u} - \frac{1}{\beta\gamma\Delta^4\tau^{3/2}} \sqrt{n} \textbf{t}\trans  \bar{\textbf{z}} + \frac{1}{\beta\gamma\Delta^2\tau^{3/2}} \textbf{t}\trans  \sqrt{n} \left( \frac{1}{n} \sum_{i=1}^n \textbf{z}_i  (\textbf{z}_i\trans  \textbf{u})^2 - \beta \gamma \tau \textbf{m} \right)
         \rightsquigarrow \mathcal{N}(0, C).
    \end{align*}
    
Consequently, the asymptotic variance constant C satisfies, by the CLT, that
\begin{align*}
    C = \mathrm{Var} \left( - X_1 X_2 - \frac{1}{\beta\gamma\Delta^4\tau^{3/2}} X_2 + \frac{1}{\beta\gamma\Delta^2\tau^{3/2}} X_1^2 X_2 \right),
\end{align*}
where $X_1, X_2$ are as in the proof of Theorem \ref{theo:limiting_st_moment}. Computing the variance now yields the claim.

\end{proof}

\section{Supplementary material}
Supplementary material contains additional numerical experiments and simulation results including Fig. S1 - S3.
\bibliographystyle{myjmva}
\bibliography{refs}

\end{document}